\documentclass[11pt]{article}
\usepackage[a4paper,margin=24mm]{geometry}
\usepackage[T1]{fontenc}
\usepackage{lmodern,microtype,amsmath,amssymb,amsthm,mathtools,booktabs,mathrsfs}
\usepackage{placeins}
\usepackage[labelsep=period]{caption}
\usepackage[colorlinks=true,linkcolor=blue,citecolor=blue,urlcolor=blue]{hyperref}
\hypersetup{pdftitle={Real-rootedness of Kazhdan--Lusztig polynomials of sparse paving matroids},pdfauthor={Philip B. Zhang}}
\numberwithin{equation}{section}
\newtheorem{theorem}{Theorem}[section]
\newtheorem{lemma}[theorem]{Lemma}
\newtheorem{proposition}[theorem]{Proposition}

\theoremstyle{remark}
\newcommand{\varthetax}{\vartheta_x}

\title{Real-rootedness of Kazhdan--Lusztig polynomials of sparse paving matroids}
\author{Philip B.~Zhang\\[0.4em]
\small College of Mathematical Sciences \& Institute of Mathematics and Interdisciplinary Sciences,\\
\small Tianjin Normal University, Tianjin 300387, P.R.~China\\[0.2em]
\small\texttt{zhang@tjnu.edu.cn}}
\date{}
\begin{document}
\maketitle
\begin{abstract}
We prove that every nonconstant Kazhdan--Lusztig polynomial of a sparse
paving matroid has only simple negative zeros. For each rank $d\ge5$,
the Kazhdan--Lusztig polynomial of the uniform matroid of rank $d-2$
on $d$ elements strictly interlaces those of all sparse paving matroids
of rank $d$ and corank at least two.
\end{abstract}

\noindent\textbf{Keywords.} Kazhdan--Lusztig polynomial; sparse paving matroid;
real-rootedness; interlacing.

\noindent\textbf{2020 Mathematics Subject Classification.}
05B35, 26C10, 05A20, 26D15.

\section{Introduction}

Elias, Proudfoot, and Wakefield \cite{EPW} introduced the
Kazhdan--Lusztig polynomial $P_M(x)$ of a matroid $M$ and conjectured
that its coefficients are nonnegative and form a log-concave sequence.
They proved nonnegativity for representable matroids by interpreting the coefficients
as dimensions of intersection cohomology groups. Braden, Huh, Matherne, Proudfoot,
and Wang \cite{BHMPW} proved the nonnegativity conjecture for all matroids
using singular Hodge theory. Gedeon, Proudfoot, and Young
\cite{GPY} proposed the stronger conjecture that the
zeros are all real and negative. Cheng and Liu \cite{CL} constructed
representable matroids with nonunimodal Kazhdan--Lusztig polynomials,
so neither log-concavity nor real-rootedness holds for arbitrary
matroids.

A matroid of rank $d$ is \emph{sparse paving} if every $d$-element
subset of its ground set is either a basis or a circuit-hyperplane.
This class contains all uniform matroids. Mayhew, Newman, Welsh, and
Whittle \cite{MNWW} conjectured that asymptotically almost all matroids
are sparse paving.

Let $P_{m,d}(x)$ denote the
Kazhdan--Lusztig polynomial of the uniform matroid of rank $d$ on
$m+d$ elements. Proudfoot, Wakefield, and Young
\cite{PWY} computed $P_{1,d}(x)$ and interpreted its coefficients
using polygon dissections. Gedeon, Proudfoot, and Young
computed the equivariant polynomials of uniform matroids \cite{GPYeq}
and proved real-rootedness for corank one \cite[Theorem~3.3]{GPY}.
Gao, Lu, Xie, Yang, and
Zhang \cite{GLXYZ} obtained the positive formula for the coefficients that we use
below and proved real-rootedness for coranks $2\le m\le15$ and all
ranks using multiplier sequences and symbolic computations. Xie and Zhang \cite{XZ} proved
log-concavity for all uniform matroids. Gao, Li, Xie, Yang, and Zhang
\cite{GLiXYZ} recovered this log-concavity from induced log-concavity of
the equivariant Kazhdan--Lusztig polynomials.

Proudfoot, Xu, and Young \cite{PXY} introduced the $Z$-polynomial of
a matroid. Xie and Zhang \cite{XZz} proved its real-rootedness for all
sparse paving matroids and strict interlacing for uniform matroids
of fixed positive corank in consecutive ranks.

For sparse paving matroids, Lee, Nasr, and Radcliffe \cite{LNR}
gave a coefficient formula in terms of skew Young tableaux and proved
nonnegativity. Their formula shows that $P_M(x)$ depends only on the
rank, the corank, and the number of circuit-hyperplanes. Ferroni and
Vecchi \cite{FV} obtained this dependence by relaxing circuit-hyperplanes and computationally verified real-rootedness for sparse paving
matroids with at most $30$ elements. Ferroni, Nasr, and Vecchi \cite{FNV}
obtained formulas for the Kazhdan--Lusztig polynomials of paving matroids
by relaxing stressed hyperplanes.
Xie and Zhang~\cite{XZunimodal}
proved unimodality for loopless sparse paving matroids and
determined the modes.
Our main result is as follows.

\begin{theorem}\label{th:intro-main}
Every nonconstant Kazhdan--Lusztig polynomial of a sparse paving
matroid has only simple negative zeros.
\end{theorem}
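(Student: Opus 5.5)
We plan to reduce Theorem~\ref{th:intro-main} to a statement about a one-parameter family of polynomials. By the Lee--Nasr--Radcliffe formula \cite{LNR}, or equivalently the relaxation identity of Ferroni and Vecchi \cite{FV}, a sparse paving matroid $M$ of rank $d$ on $m+d$ elements with $\lambda$ circuit-hyperplanes satisfies
\begin{equation*}
P_M(x)=P_{m,d}(x)-\lambda\, p_d(x),
\end{equation*}
where $p_d(x)$ is a fixed polynomial. Here $p_d$ is, up to normalization, the correction that appears when one circuit-hyperplane is relaxed, and it is expressible through the corank-one-type polynomial $P_{d-2\text{ or so}}$. The abstract indicates that this correction is essentially the Kazhdan--Lusztig polynomial $P_{d-2,2}$-type object, namely that of the uniform matroid of rank $d-2$ on $d$ elements, up to a shift. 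So the first step is to write $p_d$ explicitly, using the positive coefficient formula of \cite{GLXYZ}, as a positive multiple of $P_{2,d-2}(x)$ or a closely related polynomial $Q_d(x)$ of degree $\lfloor (d-1)/2\rfloor-1$. Small ranks $d\le4$, and corank $m\le1$, will be handled directly: there $P_M$ is constant or linear, or corank one is the case treated in \cite{GPY}.

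The main step is to prove that $Q_d$ strictly interlaces $P_{m,d}$ for every $m\ge2$ and $d\ge5$. This means both are real-rooted with simple negative zeros, $\deg P_{m,d}=\deg Q_d+1$, and the zeros alternate. Granting this, the standard Hermite--Kakeya--Obreschkoff argument applies. Suppose $f$ has simple negative zeros $r_1<\dots<r_n$ and $g$ has positive leading coefficient and zeros strictly separating them. Then $f-\lambda g$ changes sign on each interval $(r_i,r_{i+1})$, and also on the last gap between $r_n$ and $0$ provided $f(0)-\lambda g(0)$ keeps the sign of $f(0)$. This yields $n$ simple negative zeros for every $\lambda$ in the admissible range. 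The constraint on $\lambda$ comes from nonnegativity of coefficients: the constant term of $P_M$ is $1$, and the known bound on the number of circuit-hyperplanes keeps the leading coefficient positive, by \cite{LNR,XZunimodal}. A degree drop can only occur at the extreme value of $\lambda$, and then a zero escapes to $-\infty$ while the remaining ones stay simple and negative. So we must verify the sign pattern at $0$ and at $-\infty$ for all admissible $\lambda$. Real-rootedness of $P_{m,d}$ itself we would take from the strict interlacing obtained in the same induction, or from a three-term recurrence in $m$.

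The hardest part is proving that $Q_d$ strictly interlaces $P_{m,d}$ uniformly in $m$. Our first attempt would be to derive, from the \cite{GLXYZ} formula, a recurrence expressing $P_{m+1,d}$ as a combination of $P_{m,d}$ and $Q_d$, or of $P_{m,d}$ and its derivative, with coefficients whose signs preserve interlacing. This would be in the spirit of the multiplier-sequence arguments of \cite{GLXYZ} and the $Z$-polynomial interlacing in \cite{XZz}. Induction on $m$ would then start from an explicit check that $Q_d$ interlaces $P_{2,d}$. If no clean recurrence emerges, the fallback is to locate the zeros of $Q_d$ explicitly, since it is a hypergeometric/Narayana-like polynomial. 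We would then show that $P_{m,d}$ alternates in sign at those zeros, by writing $P_{m,d}(x)$ as a positive combination of shifted copies of $Q_d$-type terms evaluated there.
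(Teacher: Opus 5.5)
There is a genuine gap, and it begins with how you identify the correction term. In the relaxation formula $P_M(x)=P_{m,d}(x)-\lambda\Delta_d(x)$, the correction is $\Delta_d=P_{1,d}-P_{1,d-1}$. It has degree $n=\lfloor(d-1)/2\rfloor$, the same as $P_{m,d}$, and it vanishes at $0$. It is not a multiple of $P_{2,d-2}$: by \eqref{c:eq:Kidentity}, $\Delta_d(x)=x\bigl(dP_{1,d-2}(x)-P_{2,d-2}(x)\bigr)$. The polynomial $P_{2,d-2}$ in the abstract is the common \emph{interlacer} of the family, not the correction, and $\Delta_d$ does not vanish at its zeros. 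So your main step, that $P_{2,d-2}$ interlaces $P_{m,d}$ for all $m\ge2$, covers only $\lambda=0$. The values of $P_{m,d}-\lambda\Delta_d$ at the zeros of the interlacer move with $\lambda$, so nothing transfers to $\lambda>0$. Your Hermite--Kakeya--Obreschkoff step also fails with $g=\Delta_d$, because $\Delta_d$ need not interlace $P_{m,d}$. For $d=5$ and $m=10$, \eqref{c:eq:coeff} gives $P_{10,5}(x)=1+1350x+1925x^2$, whose zeros lie in $(-0.71,0)$. Meanwhile $\Delta_5(x)=4x+5x^2$ has its negative zero at $-4/5$, to the left of both.

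The same example refutes your claim that the bound on $\lambda$ only keeps the leading coefficient positive, so that checking signs at $0$ and $-\infty$ suffices. At $\lambda=336$ you get $P_{10,5}(x)-336\Delta_5(x)=1+6x+245x^2$, which has positive coefficients but nonreal zeros; here $\bar\lambda_{10,5}=273$. By contrast, at $\lambda=273$ the polynomial $1+258x+560x^2$ has zeros on both sides of $-1/5$, the zero of $P_{2,3}(x)=1+5x$. The bound on $\lambda$ must therefore control the interior sign pattern, and this is the heart of the paper. The residue sign \eqref{c:eq:correctiondirection}, which follows from $P_{1,d-2}\prec P_{2,d-2}$, shows that lowering $\lambda$ only improves the signs at the zeros of $xP_{2,d-2}(x)$. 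Everything therefore reduces to the endpoint inequality at $\lambda=\bar\lambda_{m,d}$ (Proposition~\ref{c:prop:endpoint}), uniformly in $m$. Sections~\ref{sec:symbol} and~\ref{sec:lowranks} prove it using partial fractions in the Euler operator, residue comparisons, an integral representation with a decreasing weight, a corank recurrence, and exact certificates for $d\le16$. Your plan has no counterpart of this inequality. Even its $\lambda=0$ case is left to an unspecified recurrence or zero-location argument, although real-rootedness of $P_{m,d}$ was previously known only for $m\le15$.
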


By Newton's inequalities, Theorem~\ref{th:intro-main} implies
log-concavity of the coefficients.
Our proof uses a formula for relaxing circuit-hyperplanes.
Relaxing a circuit-hyperplane means declaring it to be a basis.
We use the convention that $P_M(x)=0$ when $M$ has a loop
\cite[Remark~2.8]{FV}. In particular, $P_{1,0}(x)=0$.
Write $\Delta_d(x)=P_{1,d}(x)-P_{1,d-1}(x)$.
Ferroni and Vecchi \cite[Corollary 3.8 and Theorem 5.2]{FV} showed that a sparse paving matroid
$M$ of rank $d\ge1$ and corank $m$, with $\lambda$ circuit-hyperplanes,
satisfies
\[
 P_M(x)=P_{m,d}(x)-\lambda\Delta_d(x).
\]
Set
\[
 \bar\lambda_{m,d}=\frac{\binom{m+d}{d}}{\max(m,d)+1}.
\]
The bound on the number of circuit-hyperplanes in \cite[Lemma 5.4]{FV} gives
$0\le\lambda\le\bar\lambda_{m,d}$.
For $d\ge5$, write $n=\lfloor(d-1)/2\rfloor$.
The following theorem applies to every real parameter satisfying this upper bound.

\begin{theorem}\label{th:interlacing}
Let $d\ge5$ and $m\ge2$ be integers, and let $\lambda$ be real with
$\lambda\le\bar\lambda_{m,d}$.
Then $P_{m,d}(x)-\lambda\Delta_d(x)$ has degree $n$ and only
simple negative zeros, and $P_{2,d-2}(x)$ strictly interlaces this polynomial.
\end{theorem}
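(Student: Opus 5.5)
Set $Q_\lambda(x)=P_{m,d}(x)-\lambda\Delta_d(x)$ and $R(x)=P_{2,d-2}(x)$. The plan rests on the fact that $Q_\lambda$ is affine in $\lambda$. I would show that $R$ strictly interlaces $Q_\lambda$ by establishing a sign pattern at the zeros of $R$, and then read off degree and real-rootedness from interlacing.

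\emph{The anchor.} Since $\deg P_{2,d-2}=\lfloor(d-3)/2\rfloor=n-1$, the polynomial $R$ has degree $n-1$. By the corank-two real-rootedness of \cite{GLXYZ}, all its zeros are simple and negative; call them $r_1>r_2>\dots>r_{n-1}$. Strict interlacing of a degree-$n$ polynomial $Q$ by $R$ is then equivalent to two conditions: $Q$ alternates strictly in sign along $r_1,\dots,r_{n-1}$, with $\operatorname{sgn}Q(r_j)=(-1)^j$ (given positive leading coefficients), and $Q(0)>0$ together with the correct sign at $-\infty$.

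\emph{Reduction to two values of $\lambda$.} For fixed $j$, the value $Q_\lambda(r_j)$ is affine in $\lambda$, so its sign is $(-1)^j$ on an interval of $\lambda$. I would therefore check the required signs at $\lambda=\bar\lambda_{m,d}$ and at $\lambda\to-\infty$. At $-\infty$ the sign is that of $\Delta_d(r_j)$. The identity that must drive this step is that $\Delta_d=P_{1,d}-P_{1,d-1}$ is closely tied to $R$: the polygon-dissection formula of \cite{PWY} gives explicit coefficients, and I would aim to show that $\Delta_d$ is strictly interlaced by $R$ with matching signs, so that $(-1)^j\Delta_d(r_j)>0$. The leading coefficient and constant term of $Q_\lambda$ must also be controlled. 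The constant term of $P_{m,d}$ is $1$ and that of $\Delta_d$ is $0$, so the constant term of $Q_\lambda$ is $1$ for every $\lambda$. For the coefficient of $x^n$, I would compare coefficients via \cite{GLXYZ} and \cite{PWY} and show it is positive for all $\lambda\le\bar\lambda_{m,d}$. This comparison is the reason for the bound on $\lambda$, which gives degree exactly $n$.

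\emph{The extreme value $\lambda=\bar\lambda_{m,d}$.} Here I would show $(-1)^jQ_{\bar\lambda}(r_j)>0$. This is the main obstacle, because it must hold uniformly in $m\ge2$ and $d\ge5$. I would first try to write $Q_{\bar\lambda}$ as a positive combination of polynomials already known to be interlaced by $R$. One route uses $P_{m,d}$ expressed through the recursion in $m$ from the positive formula of \cite{GLXYZ}, together with the fact that polynomials interlaced by a common $R$ form a convex cone. This cone property also covers the intermediate values of $\lambda$ once both endpoints are handled. Failing that, I would bound $|P_{m,d}(r_j)|$ relative to $|\Delta_d(r_j)|$ using explicit localization of the $r_j$, namely Chebyshev-type approximations of the corank-two zeros, and compare the two quantities against $\bar\lambda_{m,d}$.

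\emph{Conclusion.} With the alternating signs at the $r_j$, the value $Q(0)=1>0$, and a positive leading coefficient, the intermediate value theorem gives a zero of $Q_\lambda$ in each interval $(r_{j+1},r_j)$, one in $(r_1,0)$, and one in $(-\infty,r_{n-1})$. That accounts for $n$ zeros, so all zeros of $Q_\lambda$ are simple, negative, and strictly interlaced by $R$.
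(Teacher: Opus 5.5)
Your reduction is the one the paper uses. You write $Q_\lambda=F_{m,d}+(\bar\lambda_{m,d}-\lambda)\Delta_d$, use the sign of $\Delta_d$ at the zeros of $P_{2,d-2}$, control $[x^n]F_{m,d}$, and count zeros against $xP_{2,d-2}(x)$.

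\textbf{The gap: the endpoint inequality.} The inequality $(-1)^jF_{m,d}(r_j)>0$ for all $d\ge5$, $m\ge2$ is Proposition~\ref{c:prop:endpoint}, which occupies Sections~\ref{sec:symbol} and~\ref{sec:lowranks}. It is the entire difficulty, and you name two strategies without carrying out either.

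Your second route cannot work as described. For $m\ge d$ one has $\bar\lambda_{m,d}=(m+2)_{d-1}/d!$. The coefficient of $m^{d-1}$ in $(F_{m,d}(x)-1)/x$ is then $\bigl(dP_{1,d-2}(x)-\Delta_d(x)/x\bigr)/d!=P_{2,d-2}(x)/d!$. So $P_{m,d}(r_j)$ and $\bar\lambda_{m,d}\Delta_d(r_j)$ are both of order $m^{d-1}$ and agree to leading order, while $F_{m,d}(r_j)=O(m^{d-2})$. Indeed, for fixed $d$ the zeros of $F_{m,d}$ converge to those of $xP_{2,d-2}(x)$ as $m\to\infty$. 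Evaluating at a point within $\varepsilon$ of $r_j$ changes $F_{m,d}$ by order $\varepsilon m^{d-1}$, which swamps the true value once $m\gg1/\varepsilon$. So no approximate localization of the zeros decides the sign uniformly in $m$. You have to exploit the cancellation exactly at the nodes.

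The paper does this in two regimes. For $5\le d\le16$ it reduces $H_{m,d}$ modulo $P_{2,d-2}$, which removes the $m^{d-1}$ term (Lemma~\ref{f:lem:polynomial}). It then certifies the sign of every coefficient of $(m-2)^j$ on rational isolating intervals. For $d\ge17$ it expands $(F_{m,d}-P_{2,d-2})/x$ in the operators $(p+\vartheta_x)^{-1}$ and $(c-\vartheta_x)^{-1}$ applied to $P_{2,d-2}$ (Lemma~\ref{c:lem:decomposition}). It absorbs the possibly negative coefficient at $p=m+1$ using the monotonicity of Lemma~\ref{c:lem:left}. It treats the alternating coefficients at the positive poles as an integral with a positive decreasing weight (Lemma~\ref{c:lem:right}). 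This leaves conditions (i)--(iii) of Proposition~\ref{c:th:criterion}, each needing its own estimate.

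Your first route, a positive combination of polynomials already interlaced by $R$, points loosely in this direction. Without a concrete decomposition, though, it has no content. The obvious splitting $F_{m,d}=P_{m,d}-\bar\lambda_{m,d}\Delta_d$ does not qualify, because $\Delta_d$ lies in that cone and enters with a negative coefficient.

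\textbf{The supporting steps are only announced.} The sign of $\Delta_d$ at the nodes comes from the identity $\Delta_d(x)/x=dP_{1,d-2}(x)-P_{2,d-2}(x)$, read off from \eqref{c:eq:coeff}. It gives $\Delta_d(r)/r=dP_{1,d-2}(r)$ and reduces the claim to $P_{1,d-2}\prec P_{2,d-2}$. The paper proves this with multiplier sequences and Euler operators in Lemma~\ref{c:lem:comparisonzeros}. That lemma also gives the simplicity of the zeros of $P_{2,d-2}$, which real-rootedness alone does not. Positivity of $[x^n]F_{m,d}$ requires the coefficient estimates of \cite{LNR} plus a separate check at $(m,d)=(3,5)$. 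It does not follow from the node signs: with those signs and $F_{m,d}(0)=1$, a nonpositive top coefficient would either lower the degree or produce a positive zero.
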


For a fixed rank, Theorem~\ref{th:interlacing} gives a common strict
interlacer for all coranks $m\ge2$ and all real parameters
$\lambda\le\bar\lambda_{m,d}$. Section~\ref{sec:comparisons} establishes the
strict interlacing that we use in the proof and reduces the problem to
inequalities at the zeros of $P_{2,d-2}(x)$. The sign of the correction
term shows that it suffices to consider the largest allowed value of $\lambda$.
We state the required endpoint inequality in
Proposition~\ref{c:prop:endpoint}.

Section~\ref{sec:symbol} proves the endpoint inequality for $d\ge17$.
A partial fraction expansion and two residue comparisons reduce the
proof to three sign conditions. We first prove positivity of the
coefficient at the negative pole closest to zero. We combine the terms
at the positive poles into an integral representation with
a positive decreasing weight. The quotient of this sum by $P_{2,d-2}(x)$ has
positive residues at the zeros of $P_{2,d-2}(x)$. An integral lower bound
controls the derivative of the weight and supplies the comparison
needed in a recurrence in the corank. A direct estimate handles the
remaining small coranks.

Section~\ref{sec:lowranks} proves the endpoint inequality for
$5\le d\le16$. We take remainders modulo $P_{2,d-2}(x)$ and expand in powers
of $m-2$. The coefficients in this expansion are polynomials in $x$. We verify
their signs by exact computation on rational intervals containing the
zeros of $P_{2,d-2}(x)$. These signs give the required inequalities for every $m\ge2$.
We conclude with the proofs of the main theorems, treating ranks at most four
and coranks at most one separately.

\section{Reduction to the endpoint inequality}\label{sec:comparisons}

Throughout the rest of the paper, $d\ge5$ and $m\ge2$ are integers
unless we state otherwise. We reduce Theorem~\ref{th:interlacing} to a sign
inequality at the zeros of $P_{2,d-2}(x)$ when $\lambda=\bar\lambda_{m,d}$.
We first use residues and coefficient identities to prove that
$P_{1,d-2}(x)$ and $P_{2,d-2}(x)$ strictly interlace.
This interlacing shows that decreasing $\lambda$ preserves the required
signs at the zeros of $P_{2,d-2}(x)$. We then verify the degree and
leading coefficient needed to count all zeros and state the endpoint inequality.
Sections~\ref{sec:symbol} and~\ref{sec:lowranks} prove that inequality.

Let $p(x)$ and $q(x)$ be nonzero real polynomials of degrees $j$ and $k$,
respectively, with only simple real zeros. Write their zeros as
$\rho_1<\cdots<\rho_j$ and $\sigma_1<\cdots<\sigma_k$, respectively.
If $k=j+1$ and $j\ge1$, we write $p(x)\prec q(x)$ when
\[
 \sigma_1<\rho_1<\sigma_2<\cdots<\rho_j<\sigma_{j+1}.
\]
If $k=j\ge1$, we write $p(x)\prec q(x)$ when
\[
 \rho_1<\sigma_1<\cdots<\rho_j<\sigma_j.
\]
We say that $p(x)$ and $q(x)$ strictly interlace if
$p(x)\prec q(x)$ or $q(x)\prec p(x)$.
For $j=0$ and $k=1$, we take $p(x)\prec q(x)$ by convention.
We use the following characterization of strict interlacing in terms
of residues. See also \cite{HT}.

\begin{lemma}\label{c:lem:residue}
Let $p(x)$ and $q(x)$ be real polynomials with positive leading
coefficients. Suppose that $p(x)$ has degree $j\ge1$ and simple real
zeros $\rho_1<\cdots<\rho_j$.
\begin{enumerate}
\item[(i)] If $\deg q(x)=j$, then $p(x)\prec q(x)$ if and only if
\[
 \frac{q(\rho_i)}{p'(\rho_i)}<0\qquad\text{for every }i.
\]
\item[(ii)] If $\deg q(x)=j$, then $q(x)\prec p(x)$ if and only if
\[
 \frac{q(\rho_i)}{p'(\rho_i)}>0\qquad\text{for every }i.
\]
\item[(iii)] If $\deg q(x)=j-1$, then $q(x)\prec p(x)$ if and only if
\[
 \frac{q(\rho_i)}{p'(\rho_i)}>0\qquad\text{for every }i.
\]
\end{enumerate}
\end{lemma}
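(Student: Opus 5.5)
The plan is to reduce all three parts to a count of sign changes of $q$ at the zeros of $p$, using the intermediate value theorem and the fixed total number of zeros of $q$. First we record the signs of $p'(\rho_i)$. Since $p$ has positive leading coefficient and simple real zeros $\rho_1<\cdots<\rho_j$, we have $p(x)>0$ for $x>\rho_j$. Moreover $p$ changes sign exactly once at each $\rho_i$. Hence $\operatorname{sgn} p'(\rho_i)=(-1)^{j-i}$. The residue condition $q(\rho_i)/p'(\rho_i)>0$ for all $i$ is therefore equivalent to $\operatorname{sgn} q(\rho_i)=(-1)^{j-i}$ with $q(\rho_i)\neq0$. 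Likewise, $q(\rho_i)/p'(\rho_i)<0$ for all $i$ is equivalent to $\operatorname{sgn} q(\rho_i)=(-1)^{j-i+1}$.

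For the forward directions, assume the interlacing pattern and read off the sign of $q$ at each $\rho_i$. Since $q$ has positive leading coefficient and simple zeros, the sign of $q(\rho_i)$ is $(-1)^{N_i}$, where $N_i$ is the number of zeros of $q$ greater than $\rho_i$.

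In (i), $p\prec q$ means $\rho_i<\sigma_i<\rho_{i+1}$, so the zeros of $q$ exceeding $\rho_i$ are $\sigma_i,\dots,\sigma_j$. This gives $N_i=j-i+1$.

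In (ii), $q\prec p$ means $\sigma_i<\rho_i<\sigma_{i+1}$, giving $N_i=j-i$.

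In (iii), with $\deg q=j-1$ and $\rho_i<\sigma_i<\rho_{i+1}$, we again get $N_i=j-i$. Combined with the first step, these counts give the stated residue signs.

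For the converse directions, assume the residue sign condition, so the values $q(\rho_1),\dots,q(\rho_j)$ strictly alternate in sign. By the intermediate value theorem, $q$ has at least one zero in each open interval $(\rho_i,\rho_{i+1})$, which accounts for $j-1$ zeros. In (iii) this already exhausts $\deg q=j-1$. So these are all the zeros, each is simple, and each lies alone in its interval, which is exactly $q\prec p$.

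In (i) and (ii) one more zero is needed, and the condition at the endpoint locates it.

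In (i), $q(\rho_j)<0$ while $q(x)\to+\infty$ as $x\to\infty$. So there is an additional zero in $(\rho_j,\infty)$. This gives $j$ distinct real zeros, hence all zeros, all simple, in the pattern $\rho_1<\sigma_1<\cdots<\rho_j<\sigma_j$.

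In (ii), $q(\rho_j)>0$, so the rightmost-interval argument gives nothing. Instead use $\rho_1$: $\operatorname{sgn} q(\rho_1)=(-1)^{j-1}$, while $\operatorname{sgn} q(x)\to(-1)^{j}$ as $x\to-\infty$, since $\deg q=j$. So there is a zero in $(-\infty,\rho_1)$, which yields $\sigma_1<\rho_1<\sigma_2<\cdots<\sigma_j<\rho_j$.

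The only delicate point is this bookkeeping of which extra interval carries the last zero. We also need to ensure that no zero of $q$ coincides with a $\rho_i$; this is automatic because the conditions are strict inequalities.

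When $j=1$, the interval argument is vacuous but the endpoint argument still applies. In (iii) with $j=1$, $q$ is a positive constant, and the statement matches the convention for $j=0$.
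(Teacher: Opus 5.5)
Your proof is correct and follows essentially the same route as the paper: the sign $(-1)^{j-i}$ of $p'(\rho_i)$ converts the residue conditions into alternating signs of $q(\rho_i)$, the intermediate value theorem gives a zero of $q$ in each gap $(\rho_i,\rho_{i+1})$, the extra zero lies to the right of $\rho_j$ in (i) or to the left of $\rho_1$ in (ii), and a degree count finishes. Your count $N_i$ of zeros of $q$ exceeding $\rho_i$ spells out the forward direction, which the paper states in one line.
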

\begin{proof}
The residue of $q(x)/p(x)$ at $\rho_i$ is
\[
 \frac{q(\rho_i)}{p'(\rho_i)}.
\]
Since $p'(\rho_i)$ has sign $(-1)^{j-i}$, the stated interlacing
orders imply the corresponding residue signs.
Conversely, each prescribed set of residue signs gives a zero of
$q(x)$ between each pair of consecutive zeros of $p(x)$.
When $\deg q(x)=j$, we use the positive leading coefficient to obtain
$q(x)>0$ for sufficiently large positive $x$ and
$(-1)^j q(x)>0$ for sufficiently large negative $x$.
Negative residues give $q(\rho_j)<0$, so we obtain one further
zero to the right of $\rho_j$. Positive residues give
$(-1)^{j-1}q(\rho_1)>0$, so we obtain one to the left of $\rho_1$.
When $\deg q(x)=j-1$, the intervals between consecutive zeros of
$p(x)$ already account for all zeros of $q(x)$. By counting degrees in each
case, we obtain simple zeros and the asserted interlacing.
\end{proof}

Gedeon, Proudfoot, and Young \cite[Theorem~3.3]{GPY} proved
real-rootedness of $P_{1,d-2}(x)$. We use the coefficient identities
below to construct diagonal operators that give simple negative zeros for
$P_{1,d-2}(x)$ and $P_{2,d-2}(x)$ and preserve strict interlacing.

We have $P_{m,d}(0)=1$. By \cite[Theorem 1.3]{GLXYZ}, the coefficient
of $x^k$ in $P_{m,d}(x)$ is
\begin{equation}\label{c:eq:coeff}
 \frac{\binom{m+d}{k}}{d-k}
 \sum_{h=0}^{m-1}\binom{d-k+h}{h+k+1}\binom{k-1+h}{h},
 \quad1\le k\le n.
\end{equation}
In particular, $\deg P_{m,d}(x)=n$ and
$\deg P_{1,d-2}(x)=\deg P_{2,d-2}(x)=n-1$.
Specializing \eqref{c:eq:coeff} at coranks one and two gives
\begin{equation}\label{c:eq:Kidentity}
 P_{2,d-2}(x)=dP_{1,d-2}(x)-\Delta_d(x)/x.
\end{equation}

To prove that the diagonal operators below preserve simple real
zeros, we use the following consequence of
\cite[Theorem 3.1]{CC}.

\begin{lemma}\label{c:lem:strictmult}
Let $\gamma_0,\ldots,\gamma_{n-1}>0$, and define the linear map $T$ by
\[
 T\!\left(\sum_{k=0}^{n-1}p_kx^k\right)=
 \sum_{k=0}^{n-1}\gamma_kp_kx^k.
\]
If the Jensen polynomial
\[
 \sum_{k=0}^{n-1}\binom{n-1}{k}\gamma_kx^k
\]
has only negative real zeros, then $T$ sends nonzero real polynomials
of degree at most $n-1$ with simple real zeros to polynomials of the same
degree with simple real zeros. Moreover, if $p(x)$ and $q(x)$ have degree
at most $n-1$ and $p(x)\prec q(x)$, then $T(p(x))\prec T(q(x))$.
\end{lemma}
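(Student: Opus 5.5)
Lemma~\ref{c:lem:strictmult} is a consequence of the Craven--Csordas characterization \cite[Theorem 3.1]{CC}. The plan is to derive it from the multiplier-sequence / Hermite--Poulain theory, passing to the finite-degree setting, and then get interlacing by applying $T$ to real linear combinations. First, the finite Jensen criterion. Suppose $J(x)=\sum_{k=0}^{n-1}\binom{n-1}{k}\gamma_kx^k$ has only real zeros, all of the same sign. Then $(\gamma_0,\ldots,\gamma_{n-1})$ is an $(n-1)$-sequence in the sense of P\'olya--Schur: $T$ maps every real polynomial of degree at most $n-1$ with only real zeros to one with only real zeros. The mechanism is the Schur--Szeg\H{o} composition (or Malo--Schur). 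For $p(x)=\sum p_kx^k$ of degree at most $n-1$ and real-rooted, the composition $\sum_k \binom{n-1}{k}^{-1}\binom{n-1}{k}\gamma_k p_k x^k=T(p)$ is real-rooted, because one factor has all zeros real and of one sign.

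Second, simplicity and degree. Since all $\gamma_k>0$, $T$ preserves both the degree and the constant-term support. The stronger conclusion that simple zeros go to simple zeros is what \cite[Theorem 3.1]{CC} supplies. I would prove it by perturbation. A real-rooted polynomial has simple zeros exactly when every small real perturbation within degree $\le n-1$ that keeps the degree remains real-rooted; equivalently, $p$ lies in the interior of the set of real-rooted polynomials of that degree. $T$ is a linear isomorphism on polynomials of degree at most $n-1$ (diagonal with positive entries), and it maps the real-rooted set of degree $j$ into itself. Hence it maps interior points to interior points, since an open set maps onto an open set under an isomorphism. So $T(p)$ has simple zeros.

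Third, interlacing. I would use the Hermite--Kakeya--Obreschkoff theorem: $p\prec q$ (strict, with simple zeros) if and only if every real combination $\alpha p+\beta q$, with $(\alpha,\beta)\ne(0,0)$, is real-rooted with simple zeros. Strictness here means no common zeros, and then combinations have simple zeros. Linearity gives $\alpha T(p)+\beta T(q)=T(\alpha p+\beta q)$. By the second step each such combination has simple real zeros, so $T(p)$ and $T(q)$ strictly interlace in one direction. To fix the direction I would apply the residue signs of Lemma~\ref{c:lem:residue} along the homotopy $T_t$ with $\gamma_k(t)=(1-t)+t\gamma_k$. Every intermediate Jensen polynomial must then still have negative zeros; the cleanest route to that is to note that the direction is determined by the sign of the Wronskian $q'p-p'q$ near infinity, which depends only on leading coefficients and degrees and is therefore preserved.

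The main obstacle is the second step: showing that simple zeros are preserved at the boundary cases where the degree of $p$ is below $n-1$. I would handle this by restricting to the subspace of degree at most $j$. The truncated Jensen polynomials $\sum\binom{j}{k}\gamma_kx^k$ are real-rooted with negative zeros whenever the full one is, by differentiating $J$ $n-1-j$ times and reversing, which is valid by Rolle.
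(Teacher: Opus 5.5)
\textbf{Gap: fixing the direction of the interlacing.} Your first three steps match the paper: real-rootedness from \cite[Theorem 3.1]{CC}, simplicity via the interior of the real-rooted polynomials of degree $j$ in $\mathbb R[x]_{\le j}$, and strict interlacing via Obreschkoff's theorem and linearity. (One small slip: differentiating $J$ itself $n-1-j$ times gives, up to a constant, the Jensen polynomial of $(\gamma_{n-1-j},\ldots,\gamma_{n-1})$. To get $\sum_{k\le j}\binom jk\gamma_kx^k$ you must reverse, differentiate, and reverse back. The truncation is unnecessary anyway, since \cite[Theorem 3.1]{CC} covers all degrees at most $n-1$.)

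The step that fails is choosing between $T(p)\prec T(q)$ and $T(q)\prec T(p)$. Neither of your two routes works as stated.
\begin{itemize}
\item The homotopy breaks down because $\gamma_k(t)=(1-t)+t\gamma_k$ need not satisfy the hypothesis. For $n=3$ and $\gamma_k=100^k$, the Jensen polynomial $(1+100x)^2$ has only negative zeros. The intermediate Jensen polynomial $1+(2+198t)x+(1+9999t)x^2$, however, has discriminant $39204\,t(t-1)<0$ for $0<t<1$.
\item Your Wronskian claim, that the sign of $W=q'p-p'q$ near infinity depends only on degrees and leading coefficients, holds when $\deg p\ne\deg q$. It is false when $\deg p=\deg q=j$. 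Write $p=ax^j+a'x^{j-1}+\cdots$ and $q=bx^j+b'x^{j-1}+\cdots$. The $x^{2j-1}$ terms of $W$ cancel, and its leading coefficient is $a'b-ab'$, which involves the second coefficients.
\end{itemize}
Equal degree is exactly the case where both orders are a priori possible, so this is the case that matters.

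\textbf{Repair.} The fix is short and is what the paper does. $T$ multiplies $a,b$ by $\gamma_j$ and $a',b'$ by $\gamma_{j-1}$, so $a'b-ab'$ is multiplied by $\gamma_j\gamma_{j-1}>0$ and its sign is preserved. Equivalently, by Vieta's formulas, $T$ multiplies each sum of zeros by $\gamma_{j-1}/\gamma_j>0$. The relation $p\prec q$ gives $\sum\rho_i<\sum\sigma_i$, whereas $T(q)\prec T(p)$ would force the reverse inequality for the scaled sums. Hence $T(p)\prec T(q)$.

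When the degrees differ, you need no Wronskian at all. By the definition of $\prec$, the order is determined by which polynomial has the larger degree, and $T$ preserves degrees.
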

\begin{proof}
We apply Theorem~3.1 of \cite{CC} to see that $T$ preserves real-rootedness.
Since every $\gamma_k>0$, $T$ preserves degree and is a linear
homeomorphism on each $\mathbb R[x]_{\le j}$, $j\le n-1$.
A polynomial of degree $j$ with simple real zeros lies in the interior of
the set of real-rooted polynomials in $\mathbb R[x]_{\le j}$,
whereas one with a multiple zero lies on its boundary.
If $p(x)$ has degree $j$ and simple real zeros, we choose an open
neighborhood $U$ of $p(x)$ in $\mathbb R[x]_{\le j}$ consisting
of real-rooted polynomials. Then $T(U)$ is an open neighborhood
of $T(p(x))$ that contains only real-rooted polynomials. Since $T$ preserves
degree, $T(p(x))$ also has simple real zeros.

Let $p(x)$ and $q(x)$ strictly interlace. By Obreschkoff's theorem
\cite[Theorem 8]{Branden}, every nonzero real linear combination of
$p(x)$ and $q(x)$ has simple real zeros. We use linearity, injectivity, and
the preceding paragraph to obtain the same property for $T(p(x))$ and
$T(q(x))$. We apply the theorem once more to obtain strict interlacing.

If $p(x)$ and $q(x)$ have different degrees, we use degree
preservation to determine the interlacing order. Suppose they have the
same degree $j\ge1$ and $p(x)\prec q(x)$. By Vieta's formulas, $T$
multiplies each sum of zeros by $\gamma_{j-1}/\gamma_j>0$.
The sum of the zeros of $p(x)$ is smaller than that of $q(x)$.
Since $T(p(x))$ and $T(q(x))$ strictly interlace, we use this comparison
to conclude that $T(p(x))\prec T(q(x))$.

When the input polynomial has only negative zeros, $T$ preserves
coefficient signs and real-rootedness, so the image also has only negative zeros.
\end{proof}

We now describe the finite multiplier sequences that we will use.
If $p(x)=\sum_{k=0}^{n-1}p_kx^k$
has degree $n-1$, simple negative zeros, and positive leading coefficient,
then $p_k>0$ and the sequence $p_k/\binom{n-1}{k}$, $0\le k\le n-1$,
has Jensen polynomial $p(x)$.
We also use reciprocals of factorials as multipliers. For each integer
$s\ge0$, set
\[
 \mathscr L_s(x)=\sum_{k=0}^{n-1}\binom{n-1}{k}\frac{x^k}{(k+s)!}.
\]
We evaluate the generalized Laguerre polynomial of degree $n-1$ and
parameter $s$ at $-x$ to obtain a positive multiple of $\mathscr L_s(x)$.
Thus $\mathscr L_s(x)$ has simple negative zeros
\cite[Sections 18.2(vi) and 18.5(iii)]{DLMF}.
The sequences $1/(k+s)!$ and $1/(n+s-1-k)!$, indexed by $0\le k\le n-1$,
have Jensen polynomials $\mathscr L_s(x)$ and $x^{n-1}\mathscr L_s(1/x)$,
respectively. Lemma~\ref{c:lem:strictmult} therefore applies to the
corresponding diagonal operators. Compositions of these operators preserve
the conclusions of the lemma.

We write $\varthetax=x\,d/dx$ for the Euler operator. Rational functions
of $\varthetax$ act coefficientwise by substituting $k$ for $\varthetax$
on the monomial $x^k$. The denominators in these operators do not vanish
at the relevant coefficient indices. We apply an operator before evaluating
the resulting polynomial. We also write $(u)_j=u(u+1)\cdots(u+j-1)$, with $(u)_0=1$.

The preservation of simple real zeros by the following two operators is
a special case of Br\"and\'en~\cite[Corollary~3.5]{BrandenPF}.
We include a direct proof of the strict interlacing orders.

\begin{lemma}\label{c:lem:euler}
Let $f(x)$ be a nonconstant real polynomial with simple negative zeros
and positive leading coefficient. If $\alpha>0$ and $\beta>\deg f(x)$, then
$(\alpha+\varthetax)f(x)$ and $(\beta-\varthetax)f(x)$ have the same degree
as $f(x)$, have only simple negative zeros, and satisfy
\[
 (\beta-\varthetax)f(x)\prec f(x)\prec(\alpha+\varthetax)f(x).
\]
\end{lemma}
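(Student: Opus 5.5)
The plan is to write $f(x)=c\prod_{i=1}^{j}(x-\rho_i)$ with $c>0$, $j=\deg f\ge1$ and $\rho_1<\cdots<\rho_j<0$. We then evaluate the two new polynomials at these zeros and apply Lemma~\ref{c:lem:residue}.

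First, the degrees and leading coefficients. Since $\varthetax x^k=kx^k$, the operator $(\alpha+\varthetax)$ multiplies the coefficient of $x^k$ by $\alpha+k$. Likewise $(\beta-\varthetax)$ multiplies it by $\beta-k$. For $0\le k\le j$ both factors are positive, because $\alpha>0$ and $\beta>j$. So both images have degree $j$ and positive leading coefficient. They also have positive coefficients, since $f$ has positive coefficients, its zeros being negative.

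Next, the residues. At each zero we have
\[
 (\alpha+\varthetax)f(\rho_i)=\rho_i f'(\rho_i)
 \quad\text{and}\quad
 (\beta-\varthetax)f(\rho_i)=-\rho_i f'(\rho_i).
\]
Dividing by $f'(\rho_i)\neq0$ gives the residues $\rho_i<0$ and $-\rho_i>0$, respectively.
\begin{itemize}
\item Lemma~\ref{c:lem:residue}(i), with $p=f$ and $q=(\alpha+\varthetax)f$, gives $f\prec(\alpha+\varthetax)f$.
\item Lemma~\ref{c:lem:residue}(ii), with $q=(\beta-\varthetax)f$, gives $(\beta-\varthetax)f\prec f$.
\end{itemize}
That lemma also gives that both images have simple real zeros.

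It remains to show that all the zeros are negative. The images have positive coefficients, so none of their zeros is $\ge0$, and these are real zeros. So every zero is negative.

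The only point needing care is the edge case $j=1$, and it is covered by the same argument. The sign argument in Lemma~\ref{c:lem:residue} already locates the extra zero of $(\alpha+\varthetax)f$ to the right of $\rho_j$. Positivity of the coefficients then keeps that zero below $0$. I expect no real obstacle here.
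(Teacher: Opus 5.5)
Your proposal is correct and follows the paper's proof essentially step for step. Like the paper, you use positivity of the coefficients of $f(x)$ to get the degrees and positive leading coefficients. You then compute the residues $\rho_i$ and $-\rho_i$ at the zeros of $f(x)$, invoke Lemma~\ref{c:lem:residue}(i) and (ii) for the interlacing and simple real zeros, and finally use positive coefficients to conclude that all zeros are negative.
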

\begin{proof}
We use the zeros and positive leading coefficient of $f(x)$ to see that
all its coefficients are positive. The two operators multiply its coefficient of $x^k$ by
$\alpha+k$ and $\beta-k$, respectively, so both resulting polynomials have positive
coefficients and the same degree as $f(x)$. At a zero $r<0$ of $f(x)$,
their quotients by $f(x)$ have residues $r<0$ and $-r>0$, respectively.
We apply Lemma~\ref{c:lem:residue} to obtain the asserted strict interlacing and simple
real zeros. Since both polynomials have positive coefficients, we conclude that all their zeros are negative.
\end{proof}
For $\alpha>0$ and $\beta>n-1$, both operators
also satisfy the hypotheses of Lemma~\ref{c:lem:strictmult}.
Their Jensen polynomials of degree $n-1\ge1$ are
\[
 (1+x)^{n-2}\bigl(\alpha+(\alpha+n-1)x\bigr),\qquad
 (1+x)^{n-2}\bigl(\beta+(\beta-n+1)x\bigr).
\]
Thus we may compose these operators with the preceding diagonal operators.

Set
\begin{equation*}
 a_d=\frac{\sqrt{d^2+8}-d}{2},\qquad b_d=d+a_d.
\end{equation*}
Then $0<a_d<2/d<1$, $a_d b_d=2$, and $u(d-u)+2=(u+a_d)(b_d-u)$.
The zeros $-a_d$ and $b_d$ of this quadratic lie outside the interval
$[0,n-1]$.

For each integer $j\ge0$, let $U_j(z)$ denote the Chebyshev polynomial
of the second kind. It satisfies
\[
 U_j(\cos\theta)=\frac{\sin((j+1)\theta)}{\sin\theta},
 \qquad 0<\theta<\pi,
\]
by \cite[Eq.~(18.5.2)]{DLMF}. Set
\[
 f_d(x)=\sum_{k=0}^{n-1}\binom{d-k-3}{k}x^k.
\]
The coefficient expansion
\[
 U_{d-3}(z)=\sum_{k=0}^{n-1}(-1)^k
 \binom{d-k-3}{k}(2z)^{d-3-2k}
\]
gives
\begin{equation}\label{c:eq:Cheb}
 f_d(-1/(4z^2))=(2z)^{-(d-3)}U_{d-3}(z).
\end{equation}
The coefficient formula \eqref{c:eq:coeff} gives
\begin{equation}
 P_{1,d-2}(x)=(d-1)!\sum_{k=0}^{n-1}
 \frac{\binom{d-k-3}{k}}{(k+1)!(d-1-k)!}x^k,\label{c:eq:seed}
\end{equation}
and
\begin{equation}
 P_{2,d-2}(x)=d!\sum_{k=0}^{n-1}
 \frac{(k+a_d)(b_d-k)\binom{d-k-3}{k}}{(k+2)!(d-k)!}x^k.
 \label{c:eq:Kseed}
\end{equation}
By comparing the coefficients in \eqref{c:eq:seed} and~\eqref{c:eq:Kseed},
we find that, for $0\le k\le n-1$, the ratio of the coefficient of $x^k$ in
$P_{2,d-2}(x)$ to that in $P_{1,d-2}(x)$ is
\begin{equation}\label{c:eq:Kcoeff}
 \frac{d(k(d-k)+2)}{(k+2)(d-k)}.
\end{equation}

\begin{lemma}\label{c:lem:comparisonzeros}
For every integer $d\ge5$, the polynomials $f_d(x)$, $P_{1,d-2}(x)$, and $P_{2,d-2}(x)$
have $n-1$ simple negative zeros, and $P_{1,d-2}(x)\prec P_{2,d-2}(x)$.
In particular, each zero $r$ of $P_{2,d-2}(x)$ satisfies
\begin{equation}\label{c:eq:correctiondirection}
 \frac{\Delta_d(r)}{rP_{2,d-2}'(r)}=d\frac{P_{1,d-2}(r)}{P_{2,d-2}'(r)}>0.
\end{equation}
\end{lemma}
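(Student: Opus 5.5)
Start with $f_d(x)$. By \eqref{c:eq:Cheb}, the zeros of $U_{d-3}$ are $z_j=\cos(j\pi/(d-2))$ for $1\le j\le d-3$. The substitution $x=-1/(4z^2)$ pairs $z_j$ with $z_{d-2-j}$. The nonzero values $z_j$ with $1\le j\le n$, $j<(d-2)/2$, therefore give $n-1$ distinct values $-1/(4z_j^2)<0$. (When $d-3$ is odd, the root $z=0$ is discarded.) Since $\deg f_d=n-1$, the polynomial $f_d(x)$ has $n-1$ simple negative zeros.

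Next pass to $P_{1,d-2}$ and $P_{2,d-2}$ with diagonal operators. By \eqref{c:eq:seed}, $P_{1,d-2}(x)$ is a positive multiple of $T f_d(x)$, where $T$ multiplies $x^k$ by $1/((k+1)!(d-1-k)!)$. Write $(d-1-k)!=(n+s-1-k)!$ with $s=d-n\ge0$. Then $T$ is a composition of the diagonal operators with sequences $1/(k+1)!$ and $1/(n+s-1-k)!$, both allowed after Lemma~\ref{c:lem:strictmult}. So $P_{1,d-2}$ has $n-1$ simple negative zeros, the last step using the final paragraph of that lemma's proof. By \eqref{c:eq:Kcoeff}, and since $k(d-k)+2=(k+a_d)(b_d-k)$,
\[
 P_{2,d-2}(x)=d\,\frac{(a_d+\varthetax)(b_d-\varthetax)}{(2+\varthetax)(d-\varthetax)}P_{1,d-2}(x).
\]
I would factor this as $S\circ(b_d-\varthetax)\circ(a_d+\varthetax)$, with $S$ the diagonal operator $1/((k+2)(d-k))$. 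Up to a positive constant, $S$ has sequence $(k+1)!/(k+2)!$ times $(d-k-1)!/(d-k)!$. Rather than check this as a quotient of allowed sequences, I would write $P_{2,d-2}$ directly from \eqref{c:eq:Kseed}. It equals the composition of $(a_d+\varthetax)(b_d-\varthetax)$ with the operator of sequence $1/((k+2)!(d-k)!)$, applied to $f_d$. The latter operator is again a composition of the two factorial types, with $s=2$ and $s=d-n+1$, and $b_d>d>n-1$. Lemma~\ref{c:lem:strictmult} and Lemma~\ref{c:lem:euler} then give simple negative zeros for $P_{2,d-2}$.

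For the interlacing, set $g=R f_d$ with $R$ the operator of sequence $1/((k+2)!(d-k)!)$, so that $g$ has simple negative zeros. We can write $P_{1,d-2}=c\,(2+\varthetax)(d-\varthetax)g$, with $c>0$, and $P_{2,d-2}=c'(a_d+\varthetax)(b_d-\varthetax)g$, with $c'>0$. Consider the quotient
\[
 Q(x)=\frac{(2+\varthetax)(d-\varthetax)g}{(a_d+\varthetax)(b_d-\varthetax)g}.
\]
It suffices to show that the residues of $P_{1,d-2}/P_{2,d-2}$ at the zeros of $P_{2,d-2}$ are negative; by Lemma~\ref{c:lem:residue}(i) this gives $P_{1,d-2}\prec P_{2,d-2}$ for equal degrees. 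I would argue with a one-parameter homotopy. Put $h_t=(\alpha_t+\varthetax)(\beta_t-\varthetax)g$, where $(\alpha_t,\beta_t)$ moves from $(a_d,b_d)$ to $(2,d)$, keeping $\alpha_t+\beta_t=d+a_d$ or similar. The difference of the quadratics $(2+u)(d-u)-(u+a_d)(b_d-u)$ is $(2-a_d)(d-u)+\dots$. A cleaner route is to compute
\[
 (2+u)(d-u)-(u+a_d)(b_d-u)=2d-2-(2-a_d)u+\dots .
\]
In fact $(2+u)(d-u)=-u^2+(d-2)u+2d$ and $(u+a_d)(b_d-u)=-u^2+du+2$. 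So $P_{1,d-2}/c-P_{2,d-2}/c'=\bigl(2(d-1)-2\varthetax\bigr)g=2(d-1-\varthetax)g$. Hence, at a zero $r$ of $P_{2,d-2}$,
\[
 P_{1,d-2}(r)\propto(d-1-\varthetax)g(r).
\]
Now relate $(d-1-\varthetax)g$ to $P_{2,d-2}$ by interlacing chains from Lemma~\ref{c:lem:euler}. With $F=(b_d-\varthetax)g$, we have $P_{2,d-2}\propto(a_d+\varthetax)F\succ F$, and $F\prec g$ while $(d-1-\varthetax)g\prec g$. This is the main obstacle: chaining interlacings is not transitive. I would instead compare residues directly, writing $(d-1-\varthetax)g$ as a positive combination of $F$ and $g$ (namely $(b_d-\varthetax)g-a_dg$) and analysing signs at zeros of $(a_d+\varthetax)F$. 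If that fails, fall back on Obreschkoff: check that $\mu P_{1,d-2}+P_{2,d-2}$ is real-rooted for all $\mu$ by applying the $T$-operators to a pair known to interlace, and then fix the order by comparing root sums as in Lemma~\ref{c:lem:strictmult}. Finally, \eqref{c:eq:correctiondirection} follows from \eqref{c:eq:Kidentity}: at $r$, $\Delta_d(r)/r=dP_{1,d-2}(r)$. Its sign follows from Lemma~\ref{c:lem:residue}(ii) applied with $P_{1,d-2}\prec P_{2,d-2}$; the sign convention needs checking against the stated positivity.
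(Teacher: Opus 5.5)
Your treatment of the zeros is sound. The Chebyshev substitution gives the $n-1$ zeros of $f_d(x)$, and the factorial multiplier sequences together with Lemmas~\ref{c:lem:strictmult} and~\ref{c:lem:euler} give simple negative zeros for $P_{1,d-2}(x)$ and $P_{2,d-2}(x)$, essentially as in the paper.

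The gap is the interlacing $P_{1,d-2}(x)\prec P_{2,d-2}(x)$, which you never establish. Three things go wrong.
\begin{enumerate}
\item Your stated target is reversed. With $p=P_{2,d-2}$, negative residues in Lemma~\ref{c:lem:residue}(i) would give $P_{2,d-2}\prec P_{1,d-2}$. What is needed is $P_{1,d-2}(r)/P_{2,d-2}'(r)>0$ together with part~(ii). For $d=5$, $P_{1,3}=1+2x$ and $P_{2,3}=1+5x$ give the residue $3/25$.
\item Your proposed decomposition does not close. The correct identity is $(d-1-\varthetax)g=F-(a_d+1)g$, not $F-a_dg$, and it is not a positive combination. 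Nothing controls the sign of $g(r)/P_{2,d-2}'(r)$, since $g$ is two Euler steps away from $P_{2,d-2}$. At $d=5$ one has $g(x)=1/240+x/144$ and $g(-1/5)>0$, so the two terms have opposite residue signs and no termwise argument can succeed.
\item The Obreschkoff fallback names no interlacing pair and no operator, so it is not yet an argument.
\end{enumerate}

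The missing idea is to decompose $d-1-u$ against both linear factors $u+a_d$ and $b_d-u$, rather than against $b_d-u$ and $1$. Put $G=(a_d+\varthetax)g$, which has simple negative zeros by Lemma~\ref{c:lem:euler}. Then
\[
 (d-1-\varthetax)g=\frac{b_d-1}{a_d+b_d}F-\frac{a_d+1}{a_d+b_d}G.
\]
Now $P_{2,d-2}$ is a positive multiple of both $(a_d+\varthetax)F$ and $(b_d-\varthetax)G$. Lemma~\ref{c:lem:euler} therefore gives $F\prec P_{2,d-2}\prec G$. Parts (ii) and (i) of Lemma~\ref{c:lem:residue} then give $F(r)/P_{2,d-2}'(r)>0$ and $G(r)/P_{2,d-2}'(r)<0$, so both terms contribute positively. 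Part~(ii) yields $P_{1,d-2}\prec P_{2,d-2}$ and the positivity in \eqref{c:eq:correctiondirection}.

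This is exactly the paper's argument. Up to positive constants, $F$ and $G$ are $(a_d+\varthetax)^{-1}P_{2,d-2}$ and $(b_d-\varthetax)^{-1}P_{2,d-2}$. The displayed identity is the partial fraction expansion of $(u+2)(d-u)/\bigl(d(u+a_d)(b_d-u)\bigr)$.
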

\begin{proof}
Using \eqref{c:eq:Cheb}, we obtain all $n-1$ zeros of $f_d(x)$ from
the positive zeros of $U_{d-3}(z)$. The resulting zeros of $f_d(x)$
are distinct and negative.
By \eqref{c:eq:Kseed}, we obtain
$(a_d+\varthetax)^{-1}P_{2,d-2}(x)$ by multiplying $f_d(x)$ by $d!$,
dividing its coefficient of $x^k$ by $(k+2)!(d-k)!$, and applying
$b_d-\varthetax$. We obtain $(b_d-\varthetax)^{-1}P_{2,d-2}(x)$ in
the same way, with $a_d+\varthetax$ as the final operator.
Lemma~\ref{c:lem:strictmult} applies because $a_d>0$ and $b_d>n-1$.
Both polynomials therefore have degree $n-1$, simple negative zeros,
and positive leading coefficients.
We apply Lemma~\ref{c:lem:euler} to these two polynomials to obtain
$n-1$ simple negative zeros for $P_{2,d-2}(x)$ and
\[
 (a_d+\varthetax)^{-1}P_{2,d-2}(x)\prec P_{2,d-2}(x)
 \prec(b_d-\varthetax)^{-1}P_{2,d-2}(x).
\]
Their quotients by $P_{2,d-2}(x)$ have positive and negative residues,
respectively.

By \eqref{c:eq:Kcoeff}, we have
\[
 P_{1,d-2}(x)=\frac1dP_{2,d-2}(x)+\frac{2}{d(a_d+b_d)}
 \left(\frac{b_d-1}{a_d+\varthetax}
       -\frac{a_d+1}{b_d-\varthetax}\right)P_{2,d-2}(x).
\]
Since $b_d-1>0$ and $a_d+1>0$, the displayed identity gives
\[
 \frac{P_{1,d-2}(r)}{P_{2,d-2}'(r)}>0
\]
at every zero $r$ of $P_{2,d-2}(x)$.
Both $P_{1,d-2}(x)$ and $P_{2,d-2}(x)$ have degree $n-1$ and positive
leading coefficients. Lemma~\ref{c:lem:residue}(ii) therefore gives
$P_{1,d-2}(x)\prec P_{2,d-2}(x)$.
This also gives $n-1$ simple negative zeros for $P_{1,d-2}(x)$.
Finally, we obtain the equality in \eqref{c:eq:correctiondirection}
from \eqref{c:eq:Kidentity}.
\end{proof}

To determine the degree, \eqref{c:eq:Kidentity} and~\eqref{c:eq:Kcoeff}
give the ratio of the coefficient of $x^{k+1}$ in $\Delta_d(x)$ to that
of $x^k$ in $P_{1,d-2}(x)$ as
\begin{equation}\label{c:eq:Deltacoeff}
 \frac{2d(d-k-1)}{(k+2)(d-k)},\qquad 0\le k\le n-1.
\end{equation}
In particular, $\Delta_d(x)/x$ has positive coefficients.
At the upper endpoint $\lambda=\bar\lambda_{m,d}$, set
\[
 F_{m,d}(x)=P_{m,d}(x)-\bar\lambda_{m,d}\Delta_d(x).
\]
\begin{proposition}\label{c:prop:leading}
For integers $d\ge5$ and $m\ge2$, the polynomial $F_{m,d}(x)$ has
degree $n$ and positive leading coefficient.
\end{proposition}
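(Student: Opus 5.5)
The plan is to compute the coefficient of $x^n$ in $F_{m,d}(x)$ explicitly and show it is positive. Nothing else is needed for the degree claim. By \eqref{c:eq:coeff}, $\deg P_{m,d}(x)=n$. By \eqref{c:eq:Deltacoeff}, $\Delta_d(x)$ has degree at most $n$, since $\Delta_d(x)/x$ is a positive multiple of a coefficientwise rescaling of $P_{1,d-2}(x)$, which has degree $n-1$. Hence $\deg F_{m,d}(x)\le n$, and the proposition reduces to the single inequality
\[
 [x^n]P_{m,d}(x)>\bar\lambda_{m,d}\,[x^n]\Delta_d(x).
\]

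First step: evaluate both sides in closed form, splitting by the parity of $d$.

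For $d=2n+1$ we have $d-n=n+1$. Every binomial $\binom{d-n+h}{h+n+1}$ in \eqref{c:eq:coeff} equals $1$, so
\[
 [x^n]P_{m,d}(x)=\frac{\binom{m+d}{n}}{n+1}\binom{n-1+m}{m-1}.
\]
For $d=2n+2$ we have $d-n=n+2$ and $\binom{n+2+h}{h+n+1}=n+2+h$. The sum then becomes $\sum_{h=0}^{m-1}(n+2+h)\binom{n-1+h}{h}$, which is again a closed form in binomials by the hockey-stick identity.

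For $[x^n]\Delta_d(x)$, I will use \eqref{c:eq:Deltacoeff} with $k=n-1$. This expresses it as $\frac{2d(d-n)}{(n+1)(d-n+1)}$ times $[x^{n-1}]P_{1,d-2}(x)$, and \eqref{c:eq:seed} gives the latter explicitly. This yields a closed form independent of $m$; one can cross-check it against \eqref{c:eq:coeff} at $m=1$ for ranks $d$ and $d-1$.

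Second step: form the ratio
\[
 R_{m,d}=\frac{[x^n]P_{m,d}(x)}{\bar\lambda_{m,d}\,[x^n]\Delta_d(x)}
\]
and prove $R_{m,d}>1$. The factor $\binom{m+d}{n}$ from $P_{m,d}$ and the factor $\binom{m+d}{d}$ inside $\bar\lambda_{m,d}$ combine into $\frac{d!\,m!}{n!\,(m+d-n)!}$. Multiplying this by $\binom{n-1+m}{m-1}$, or by its even-$d$ analogue, gives a product of linear factors in $m$. I will then split into the cases $m\le d$ and $m>d$, according to which term gives $\max(m,d)+1$.

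For $m>d$, the extra factor $m+1$ in the denominator of $\bar\lambda_{m,d}$ works in our favour. Within each case, I will show $R_{m,d}$ is nondecreasing in $m$ by checking that $R_{m+1,d}/R_{m,d}\ge1$, which is a rational inequality in $m,d$. This reduces everything to the boundary values $m=2$ and $m=d+1$.

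At $m=2$ the inequality becomes an explicit inequality in $n$. I will verify it for $n\ge2$ by comparing factorials, or by a short induction on $n$ together with a direct check at $d=5,6$.

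The main obstacle is the monotonicity step in the range $m\le d$, where $\bar\lambda_{m,d}$ grows like $\binom{m+d}{d}/(d+1)$. There the ratio $R_{m+1,d}/R_{m,d}$ is a product of several competing linear fractions, roughly $\frac{(n+m)}{m}\cdot\frac{(m+d-n+1)}{(m+d+1)}$ up to parity corrections. If this ratio is not monotone, I will instead bound $R_{m,d}$ below directly, using $\binom{n-1+m}{m-1}\ge\binom{m+d-n}{m}\big/\binom{d-n}{?}$-type comparisons, and treat small $m$ separately.
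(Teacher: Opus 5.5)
Your closed forms for $[x^n]P_{m,d}(x)$ and $[x^n]\Delta_d(x)$ are correct, but the reduction to boundary values fails in the range $2\le m\le d$. For odd $d=2n+1$, your ratio simplifies to
\[
 R_{m,d}=\frac{(n+1)(n+2)\,m}{(m+n)(m+n+1)},\qquad 2\le m\le d .
\]
This increases only until $m\approx n$ and then decreases. For $d=5$ it equals $6/5,\,6/5,\,8/7,\,15/14$ at $m=2,3,4,5$, so the tightest case in this range is $m=d$. Checking only $m=2$ and $m=d+1$ therefore says nothing about $m$ past the peak. Your fallback (a comparison involving $\binom{d-n}{?}$) is not yet an argument.

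The repair is to treat $R_{m,d}>1$ as a polynomial inequality in $m$ and check both ends, $m=2$ and $m=d$.
\begin{itemize}
\item For odd $d$, the inequality is $(n+1)(n+2)m-(m+n)(m+n+1)>0$. This is a concave quadratic in $m$, with values $(n+2)(n-1)$ at $m=2$ and $2n(n^2-n-1)$ at $m=d$. Both are positive for $n\ge2$.
\item For even $d=2n+2$, the ratio is
\[
 \frac{(n+1)(n+3)(2n+3)}{2n(n+2)}\cdot
 \frac{m\bigl(n(m+n+2)+2\bigr)}{(m+n)(m+n+1)(m+n+2)} .
\]
Clearing denominators gives a cubic in $m$ with negative leading coefficient that is negative at $m=0$. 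It therefore has a root in $(-\infty,0)$ and a root in $(0,2)$. Positivity at $m=2$ and $m=d$, both easy to verify for $n\ge2$, then excludes roots in $[2,d]$.
\end{itemize}
For $m\ge d+1$ your monotonicity claim is fine. For odd $d$ the ratio is $\frac{(n+2)m(m+1)}{2(m+n)(m+n+1)}$, which increases in $m$. In the even case you may drop the $+2$ to get an increasing lower bound; at $m=d+1$ it equals $\frac{(n+3)(2n+3)}{3(3n+4)}>1$.

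With this fix, your route is correct and differs from the paper's mainly in being self-contained. The paper imports coefficient estimates from the proofs in \cite{LNR}. It also uses $\bar\lambda_{m,d}\le\frac{2}{m+d+2}\binom{m+d}{d}$ to avoid splitting at $m=d$. That averaged bound gives equality at $(m,d)=(3,5)$, which forces a separate check there. Your exact computation keeps the true $\max(m,d)$ and needs no exceptional case, at the cost of the parity and endpoint case analysis above.
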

\begin{proof}
Both $P_{m,d}(x)$ and $\Delta_d(x)$ have degree at most $n$.
We prove the proposition by showing that
\[
 [x^n]F_{m,d}(x)
 =[x^n]P_{m,d}(x)-\bar\lambda_{m,d}[x^n]\Delta_d(x)>0.
\]
We use the coefficient estimates in the proofs of
\cite[Lemmas~21, 22, and~25]{LNR}, taking $i=n\ge2$ in their notation.
For $m=2$, we use
\[
 \bar\lambda_{2,d}=\frac{d+2}{2}
\]
as in the proof of Lemma~22.

For $n=2$ and $m\ge d$, we use
\[
 \bar\lambda_{m,d}=\frac{1}{m+1}\binom{m+d}{d}
\]
as in the first case of the proof of Lemma~25.

For $n\ge3$ and $m\ge3$, and for $n=2$ and $3\le m<d$,
we use
\[
 \bar\lambda_{m,d}\le\frac{2}{m+d+2}\binom{m+d}{d}
\]
as in the proof of Lemma~21 and the second case of the proof of
Lemma~25, respectively.

We combine these estimates with \eqref{c:eq:seed}
and~\eqref{c:eq:Deltacoeff} to obtain the desired positivity,
except possibly when $(m,d)=(3,5)$.
In that case, \eqref{c:eq:coeff} and~\eqref{c:eq:Deltacoeff} give
\[
 [x^2]F_{3,5}(x)=56-\frac{28}{3}\cdot5=\frac{28}{3}>0.
\]
\end{proof}

The preceding facts reduce Theorem~\ref{th:interlacing} to the following
inequality.

\begin{proposition}\label{c:prop:endpoint}
For integers $d\ge5$ and $m\ge2$, every zero $r$ of $P_{2,d-2}(x)$ satisfies
\begin{equation}\label{c:eq:endpointnodes}
 \frac{F_{m,d}(r)}{rP_{2,d-2}'(r)}>0.
\end{equation}
\end{proposition}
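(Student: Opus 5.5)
We split the proof at $d=17$, as the introduction announces. For every $d$ the first step is to remove the part of $F_{m,d}(x)$ that does not depend on $m$. We write
\[
 F_{m,d}(r)=P_{m,d}(r)-\bar\lambda_{m,d}\Delta_d(r).
\]
By \eqref{c:eq:correctiondirection}, the correction term contributes $-\bar\lambda_{m,d}\Delta_d(r)/(rP_{2,d-2}'(r))<0$. We must therefore show that
\[
 \frac{P_{m,d}(r)}{rP_{2,d-2}'(r)}>\bar\lambda_{m,d}\,\frac{\Delta_d(r)}{rP_{2,d-2}'(r)}
 =d\,\bar\lambda_{m,d}\frac{P_{1,d-2}(r)}{P_{2,d-2}'(r)}
\]
at every zero $r$. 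The right-hand side is an explicit positive multiple of the residue of $P_{1,d-2}/P_{2,d-2}$, and this residue is controlled by the identity in the proof of Lemma~\ref{c:lem:comparisonzeros}. The left-hand side is the residue of $P_{m,d}(x)/(xP_{2,d-2}(x))$ at $r$.

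For large rank ($d\ge17$) we would express $P_{m,d}$ through the coefficient formula \eqref{c:eq:coeff} and compare it coefficientwise with $P_{2,d-2}$. The plan is to write the ratio of the coefficient of $x^k$ in $P_{m,d}$ to that in $P_{2,d-2}$ as a rational function $R_m(k)$. We then decompose $R_m(\vartheta_x)$ into partial fractions in $k$. Its poles come from $(k+a_d)(b_d-k)$, from the factorials, and from the hypergeometric sum over $h$. Each simple fraction $c/(\alpha+\vartheta_x)$ with $\alpha>0$, or $c/(\beta-\vartheta_x)$ with $\beta>n-1$, gives a residue at $r$ of known sign, by Lemma~\ref{c:lem:euler} together with Lemma~\ref{c:lem:strictmult}.

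The difficulty is that the coefficients $c$ need not all have the favorable sign. We would group the terms at the positive poles $\beta>n-1$ into an integral $\int_0^1 t^{\beta-1}w(t)\,dt$-type representation with a positive decreasing weight $w$. This representation shows that the combined operator still yields positive residues. Two further comparisons remain. First, the coefficient at the negative pole nearest zero (essentially $-a_d$) must dominate $d\bar\lambda_{m,d}$ times the contribution from $P_{1,d-2}$; this is the key inequality. Second, we need a recurrence in $m$: passing from $m$ to $m+1$ multiplies $\bar\lambda_{m,d}$ by roughly $(m+d+1)/(m+2)$, and we must show the $P_{m,d}$ term grows at least as fast. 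We would prove this with a lower bound on $w'$ obtained by integration, and handle the finitely many small $m$ by direct estimates of the dominant pole coefficient.

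For $5\le d\le16$ we would argue by exact computation. Reduce $xP_{2,d-2}'(x)$ and $F_{m,d}(x)$ modulo $P_{2,d-2}(x)$. Because $P_{m,d}$ is a polynomial in $m$ times binomial factors, the coefficients of the remainder are, after clearing a positive factor such as $(m+1)\binom{m+d}{d}^{-1}$, polynomials in $m-2$ whose coefficients are polynomials in $x$. Isolate the zeros of $P_{2,d-2}$ in rational intervals by Sturm sequences. Then verify by interval arithmetic that the quantity $F_{m,d}(x)\cdot xP_{2,d-2}'(x)$, reduced modulo $P_{2,d-2}$, has every $(m-2)$-coefficient positive on each interval, which gives \eqref{c:eq:endpointnodes} for all $m\ge2$. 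The factor $\max(m,d)$ in $\bar\lambda_{m,d}$ forces a split into $m<d$ and $m\ge d$, with the second range expanded in powers of $m-d$.

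I expect the main obstacle to be the large-rank comparison at the dominant pole, uniformly in $m$. It requires sharp control of $\bar\lambda_{m,d}\Delta_d(r)$ against the $m$-dependent residues, especially near the transition $m\approx d$.
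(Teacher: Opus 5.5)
Your plan for $d\ge17$ has the paper's overall shape: partial fractions in $\vartheta_x$, positive poles combined into an integral with a decreasing weight, a dominant inequality at $-a_d$, and a recurrence in $m$. Two load-bearing steps, however, are wrong or missing.

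\emph{Signs of the simple fractions.} The residue signs of the individual simple fractions are not available in the generality you claim, and Lemmas~\ref{c:lem:euler} and~\ref{c:lem:strictmult} do not supply them. The sequences $1/(k+\alpha)$ and $1/(\beta-k)$ are not multiplier sequences in general; for $\alpha=1$ the Jensen polynomial is $((1+x)^{N+1}-1)/((N+1)x)$, which has nonreal zeros. The signs are not even uniform. As $\alpha\to0^+$, the residue of $(\alpha+\vartheta_x)^{-1}P_{2,d-2}(x)/P_{2,d-2}(x)$ at $r$ is about $1/(\alpha P_{2,d-2}'(r))$. As $\beta\to(n-1)^+$, the top coefficient dominates. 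In both limits the sign alternates over the zeros. The paper proves the needed signs only at the poles that actually occur, each by a separate argument:
\begin{itemize}
\item for $p=a_d$, the factor $k+a_d$ in the coefficients of $P_{2,d-2}$ (Lemma~\ref{c:lem:comparisonzeros});
\item for integers $p\ge3$, the Laguerre comparison of Lemma~\ref{c:lem:left};
\item for the positive poles, the Chebyshev integral of Lemma~\ref{c:lem:right}, which needs $c>(d-2)/2$.
\end{itemize}

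\emph{The pole at $-m-1$.} More seriously, you treat the positive poles and $-a_d$ as the only unfavorable terms. But $R_{m,d}(u)$ has residue $-md\binom{m+d}{d}(d+m-1)/(d+2m)$ at $u=-m-1$. So $c^{\mathrm L}_{m,d}(m+1)<0$ for $m\ge3$, with size comparable to the positive coefficients. The paper controls this term in two steps. It shows that the normalized residues $(n+p-1)\bigl[(p+\vartheta_x)^{-1}P_{2,d-2}\bigr](r)/P_{2,d-2}'(r)$ are positive and strictly decreasing in $p$. It combines this with $\sum_p c^{\mathrm L}_{m,d}(p)/(n+p-1)>4$, which comes from Proposition~\ref{c:prop:leading} and the negative leading coefficient of the positive-pole part. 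Nothing in your outline plays this role.

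\emph{Setting up the operator.} You also cannot write $P_{m,d}$ itself as an operator applied to $P_{2,d-2}$, since the degrees are $n$ and $n-1$. One must represent $(F_{m,d}(x)-P_{2,d-2}(x))/x$. The constant term then produces the $-\nu_d(\vartheta_x)$ part, with the constants $\rho_d$, $\eta_d$ and the extra pole at $d-3$. These enter the conditions at $-a_d$ and $b_d$.

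\emph{Low ranks.} For $5\le d\le16$, your certificate fails on the branch $2\le m<d$. There $\bar\lambda_{m,d}=\binom{m+d}{d}/(d+1)$ has degree $d$ in $m$, while the coefficients of $P_{m,d}$ have degree $d-1$. So the coefficient of $(m-2)^d$ in $F_{m,d}(r)\,rP_{2,d-2}'(r)$ is $-\Delta_d(r)\,rP_{2,d-2}'(r)/(d+1)!$, which is negative by \eqref{c:eq:correctiondirection}. You would have to check these finitely many $m$ one at a time.

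The paper instead replaces $\bar\lambda_{m,d}$ by the polynomial majorant $\widehat\lambda_{m,d}\ge\bar\lambda_{m,d}$. It has degree $d-1$ and agrees with $\bar\lambda_{m,d}$ at $m=2$ and $m=d$. Its top term cancels modulo $P_{2,d-2}$, so a single expansion in $m-2$ covers every $m\ge2$. Your branch $m\ge d$, expanded in powers of $m-d$, is workable.

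No clearing factor is needed, since $F_{m,d}$ is already piecewise polynomial in $m$. Multiplying by $(m+1)\binom{m+d}{d}^{-1}$ would in fact produce rational functions of $m$, not polynomials.
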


Equation~\eqref{c:eq:correctiondirection} shows that
\eqref{c:eq:endpointnodes} gives the required residue signs for every
$\lambda\le\bar\lambda_{m,d}$.
Proposition~\ref{c:prop:leading} and the positive coefficients of
$\Delta_d(x)/x$ provide the degree and leading coefficient needed to
apply Lemma~\ref{c:lem:residue}(ii) with $xP_{2,d-2}(x)$ as the denominator.

\section{The endpoint inequality for \texorpdfstring{$d\ge17$}{d >= 17}}\label{sec:symbol}
In this section, we prove Proposition~\ref{c:prop:endpoint} for $d\ge17$.
In Subsection~\ref{sec:criterion}, we use a partial fraction expansion and
two residue comparisons to reduce the proof to the three conditions of
Proposition~\ref{c:th:criterion}.
In Subsection~\ref{sec:leftcoeff}, we prove
Proposition~\ref{l:prop:smallpole}, which verifies condition~(\ref{c:cond:left}).
In Subsection~\ref{sec:kernel}, Lemma~\ref{g:lem:kernel} gives a quantitative
lower bound for $S_{m,d}(z)$ that verifies condition~(\ref{c:cond:kernel})
and supplies the estimate for $S_{m,d}(1)$ needed in the last subsection.
In Subsection~\ref{sec:positivity}, we combine this estimate with
Lemma~\ref{g:lem:tail} and a direct estimate for small coranks to prove
Proposition~\ref{g:th:gap}, which verifies condition~(\ref{c:cond:gap}).

Unless we state otherwise, $d\ge5$ and $m\ge2$ are integers.
\subsection{A criterion for the endpoint inequality}\label{sec:criterion}
The following expansion separates the contributions from the negative and
positive poles. Throughout this section, we use the coefficients that we
construct in the proof of the lemma.

\enlargethispage{\baselineskip}
\begin{lemma}\label{c:lem:decomposition}
For integers $d\ge5$ and $m\ge2$, there are real coefficients
$c_{m,d}^{\mathrm L}(p)$ and $c_{m,d}^{\mathrm R}(p)$ such that
\begin{equation}\label{c:eq:decomp}
 \begin{aligned}
 \frac{F_{m,d}(x)-P_{2,d-2}(x)}{x}
 ={}&\biggl(-4+
 \sum_{p\in\{a_d,3,\ldots,m+1\}}
 \frac{c_{m,d}^{\mathrm L}(p)}{p+\varthetax}\\
 &\quad+\sum_{p\in\{a_d,1,\ldots,m-1\}}
 \frac{c_{m,d}^{\mathrm R}(p)}{d+p-\varthetax}
 -\frac{6(d-3)(d-1)}{d(3d-7)(d-3-\varthetax)}\biggr)P_{2,d-2}(x).
 \end{aligned}
\end{equation}
Moreover, $c_{m,d}^{\mathrm L}(i)>0$ for $3\le i\le m$.
\end{lemma}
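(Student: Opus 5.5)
The plan is to work coefficientwise. Both sides of \eqref{c:eq:decomp} are polynomials of degree at most $n-1$, and every operator on the right is diagonal in the monomial basis. So it suffices to write the ratio
\[
 R_{m,d}(k)=\frac{[x^{k+1}]\bigl(F_{m,d}(x)-P_{2,d-2}(x)\bigr)}{[x^k]P_{2,d-2}(x)},\qquad 0\le k\le n-1,
\]
as a rational function of $k$, and then to take its partial fraction decomposition in $k$. Substituting $\varthetax$ for $k$ turns that decomposition into \eqref{c:eq:decomp}.

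\textbf{Step 1: the ratio as a rational function of $k$.} There are three contributions.

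The $\Delta_d$ term is the easiest. Combining \eqref{c:eq:Deltacoeff} with \eqref{c:eq:Kcoeff} gives
\[
 \frac{[x^{k+1}]\Delta_d(x)}{[x^k]P_{2,d-2}(x)}=\frac{2(d-k-1)}{(k+a_d)(b_d-k)}.
\]
This contributes simple poles at $k=-a_d$ and $k=d+a_d$ only, with explicit residues multiplied by $-\bar\lambda_{m,d}$.

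For $P_{2,d-2}$ itself, \eqref{c:eq:Kseed} gives the shift ratio
\[
 \frac{(k+1+a_d)(b_d-k-1)}{(k+a_d)(b_d-k)}\cdot\frac{(d-2k-4)(d-2k-3)}{(k+1)(k+3)(d-k-3)}\cdot(d-k).
\]
This is a rational function with numerator degree $4$ over denominator degree $3$ after cancellation. Its poles are at $-a_d$, $b_d$, $-1$, $-3$ and $d-3$, and it has a linear polynomial part. The pole at $d-3$ produces the displayed term $-6(d-3)(d-1)/(d(3d-7)(d-3-\varthetax))$, which I will verify by evaluating the residue.

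For $P_{m,d}$, I use \eqref{c:eq:coeff} at index $k+1$ and write each summand over $[x^k]P_{2,d-2}$. Each $h$-summand equals $\binom{m+d}{k+1}\binom{d-k-1+h}{h+k+2}\binom{k+h}{h}/(d-k-1)$. Dividing by the factorial form \eqref{c:eq:Kseed} and cancelling $\binom{d-k-3}{k}$ leaves, for each $h$, a product of linear factors. The denominators are of the form $(k+3)\cdots(k+h+2)$ from $(h+k+2)!$ against $(k+2)!$, together with $(k+a_d)(b_d-k)$. The numerators include factors like $(d+m-k-1)\cdots$ from $\binom{m+d}{k+1}$, which produce right poles at $k=d+p$, $1\le p\le m-1$, once the $(d-k)!$ normalisation is accounted for.

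\textbf{Step 2: partial fractions.} Summing the three contributions, the poles at $k=-1$ and $k=-2$ should cancel. I will check this by showing that the total residue there vanishes, expecting it to follow from the identity \eqref{c:eq:Kidentity}. The polynomial part should reduce to the constant $-4$. Since the coefficientwise identity only needs to hold on $0\le k\le n-1$, I will confirm the polynomial part by comparing growth as $k\to\infty$, which amounts to leading-coefficient bookkeeping.

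Defining $c^{\mathrm L}_{m,d}(p)$ and $c^{\mathrm R}_{m,d}(p)$ as the residues at $k=-p$ and at $k=d+p$ then gives \eqref{c:eq:decomp}. The signs are chosen so that $1/(p+k)$ and $1/(d+p-k)$ appear with these coefficients.

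\textbf{Step 3: positivity of $c^{\mathrm L}_{m,d}(i)$ for $3\le i\le m$.} The pole at $k=-i$ with $i\ge 3$ integral comes only from the $P_{m,d}$ sum. The $\Delta_d$ term has left pole only at $-a_d$. The $P_{2,d-2}$ term has its pole at $-3$, but it contributes only to $i=3$, so it must be included there; it is covered by the hypothesis $m\ge 3$ when $i=3$.

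So the residue at $k=-i$ is a sum over the $h$ with $h+2\ge i$ of explicit residues. I will evaluate each residue at $k=-i$: the numerator binomials become products of the form $(d+m+i)\cdots$ and $(d+i+\ldots)$, and I expect all of them to be positive there. Combined with the alternating sign of $1/\prod_{j\ne i}(j-i)$, the products should give a positive total.

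This sign check is the main obstacle. Individual residues of $1/((k+3)\cdots(k+h+2))$ alternate in sign, so positivity must come from grouping the terms over $h$. My first attempt is to rewrite the $h$-sum as a single hypergeometric-type expression, for instance via Vandermonde-type summation of the $h$-sum. The aim is a closed form for the residue at $k=-i$ that is visibly a product of positive factors.
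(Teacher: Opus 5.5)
Your strategy is the paper's: you write the coefficient ratio $[x^{k+1}](F_{m,d}-P_{2,d-2})/[x^k]P_{2,d-2}$ as a rational function of $k$ and take partial fractions. The paper organizes the same symbol as $\tau_d(u)(R_{m,d}(u)-d\bar\lambda_{m,d})+\bar\lambda_{m,d}-\nu_d(u)$. However, two of your structural claims are miscomputed, and the second leaves the ``moreover'' part unproved.

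First, the shift ratio of $P_{2,d-2}$ that you display is correct, but its numerator and denominator both have degree $5$, and it tends to $4$. It has no linear part. This limit, entering with a minus sign, is exactly the constant $-4$. The $\Delta_d$ and $P_{m,d}$ contributions tend to $0$ as $k\to\infty$, so a linear part would contradict the constant $-4$ you expect in Step~2.

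Second, and this is the real gap, the $h$-summands of the $P_{m,d}$ contribution do not have denominators $(k+3)\cdots(k+h+2)$. You compared $(h+k+2)!$ only with $(k+2)!$. You dropped the numerator factor $(k+h)!$ from $\binom{k+h}{h}$ and the factor $(k+1)!$ from $\binom{m+d}{k+1}$. After all cancellations, the $h$-th summand is
\[
 \frac{(m+d)!}{d!\,h!}\cdot\frac{(k+2)(d-k)(d-k-2)}{(k+a_d)(b_d-k)(k+h+1)(k+h+2)\,(d-k+h)_{m-h}},
\]
which has only the two left integer poles $-(h+1)$ and $-(h+2)$. The common factors $k+2$ and $d-k$ cancel the poles at $-2$ and $d$ outright. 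Hence for $3\le i\le m$ only $h=i-1$ and $h=i-2$ contribute at $k=-i$. There is no long alternating sum and no Vandermonde identity is needed. The two residues combine to
\[
 \frac{(i-2)(d+i)}{d\,(i(d+i)-2)}\cdot\frac{(m+d)!\,(d+i-2)(d+i-1)\,(d+2i-3)!}{(d-1)!\,(i-1)!\,(d+m+i-1)!}>0 .
\]
This also explains why the claim stops at $i=m$: the $P_{m,d}$ contribution at $-(m+1)$ comes from $h=m-1$ alone and is negative.

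At $i=3$ you must add the contribution of $-[x^{k+1}]P_{2,d-2}/[x^k]P_{2,d-2}$, whose sign you never determined. It equals $(d+1)(d+2)(d+3)^2/(d(3d+7))>0$.

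Finally, you should record that all poles are simple, since you define the coefficients as residues. This holds because the linear factors are distinct and $-a_d,b_d$ are nonintegral. The cancellation at $-1$ is a direct check: the $h=0$ summand and the $P_{2,d-2}$ ratio both have residue $-(d+1)$ there.
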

\pagebreak[3]
To control the alternating coefficients at the positive integer poles,
we combine the terms in \eqref{c:eq:decomp} with denominators
$b_d-\varthetax$ and $d+j-\varthetax$ into an integral. Set
\[
 g(t)=-\frac1{md\binom{m+d}{d}}
 \sum_{p\in\{a_d,1,\ldots,m-1\}}
 c_{m,d}^{\mathrm R}(p)e^{-(d+1+2p)t/2}.
\]
Their sum in \eqref{c:eq:decomp} is
\[
 -md\binom{m+d}{d}\int_0^\infty
 e^{-(d-1)t/2}g(t)P_{2,d-2}(e^tx)\,dt.
\]
When $g(t)$ is positive and nonincreasing, Lemma~\ref{c:lem:right} and the
negative factor give positive residues for the quotient of this sum
by $P_{2,d-2}(x)$.
Since $g(t)\to0$, it suffices to prove $-g'(t)>0$. Define
\begin{align*}
 E_{m,d}&=-g'(0),\\
 S_{m,d}(z)&=\frac1{2md\binom{m+d}{d}}\sum_{j=1}^{m-1}
 c_{m,d}^{\mathrm R}(j)(d+1+2j)(j-a_d)z^{j-1}.
\end{align*}
Differentiating $g(t)$ gives
\begin{equation}\label{c:eq:weightderivative}
 -g'(t)=e^{-(a_d+b_d+1)t/2}
 \left(E_{m,d}+\int_{e^{-t}}^1z^{-a_d}S_{m,d}(z)\,dz\right).
\end{equation}
The following criterion reduces \eqref{c:eq:endpointnodes} to three
sign conditions.

\begin{proposition}\label{c:th:criterion}
Let $d\ge5$ and $m\ge2$ be integers. Suppose that
\begin{enumerate}
\renewcommand{\theenumi}{\roman{enumi}}
\renewcommand{\labelenumi}{(\theenumi)}
\item\label{c:cond:left} $c_{m,d}^{\mathrm L}(a_d)>0$,
\item\label{c:cond:kernel} $S_{m,d}(z)\ge0$ for every $z\in[0,1]$,
\item\label{c:cond:gap} $E_{m,d}>0$.
\end{enumerate}
Then every zero $r$ of $P_{2,d-2}(x)$ satisfies
\[
 \frac{F_{m,d}(r)}{rP_{2,d-2}'(r)}>0.
\]
\end{proposition}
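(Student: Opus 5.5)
We evaluate the decomposition of Lemma~\ref{c:lem:decomposition} at a zero $r$ of $P_{2,d-2}(x)$ and track the sign of each term. Since $P_{2,d-2}(r)=0$, we have
\[
 \frac{F_{m,d}(r)}{rP_{2,d-2}'(r)}
 =\operatorname{Res}_{x=r}\frac{F_{m,d}(x)-P_{2,d-2}(x)}{xP_{2,d-2}(x)}.
\]
By \eqref{c:eq:decomp}, this residue is a sum of contributions of the form $\operatorname{Res}_{x=r}\,(R(\varthetax)P_{2,d-2})(x)/P_{2,d-2}(x)$, one for each summand $R$ in the bracket. The constant $-4$ gives the polynomial $-4P_{2,d-2}(x)$, so it contributes zero residue. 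It therefore suffices to show that every remaining contribution is nonnegative and at least one is strictly positive.

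For the left poles, recall from the proof of Lemma~\ref{c:lem:comparisonzeros} that $(p+\varthetax)^{-1}P_{2,d-2}(x)\prec P_{2,d-2}(x)$ when $p=a_d$. This gives positive residues. The same argument works for every $p>0$: the polynomial $(p+\varthetax)^{-1}P_{2,d-2}(x)$ has simple negative zeros and positive coefficients by Lemma~\ref{c:lem:strictmult}, since the multipliers $1/(p+k)$ have a Laguerre-type Jensen polynomial. Alternatively, apply Lemma~\ref{c:lem:euler} with $f=(p+\varthetax)^{-1}P_{2,d-2}$ and then Lemma~\ref{c:lem:residue}(ii). Thus each quotient $((p+\varthetax)^{-1}P_{2,d-2})/P_{2,d-2}$ has positive residues at the zeros of $P_{2,d-2}$. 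The coefficients $c^{\mathrm L}_{m,d}(i)$ for $3\le i\le m$ are positive by Lemma~\ref{c:lem:decomposition}, and $c^{\mathrm L}_{m,d}(a_d)>0$ by hypothesis~(\ref{c:cond:left}). The coefficient $c^{\mathrm L}_{m,d}(m+1)$ has no stated sign, so it must be absorbed elsewhere. My first attempt is to note that $1/(m+1+\varthetax)$ can be written as an integral $\int_0^1 z^{m+\varthetax}\,dz$ and to check from the construction in Lemma~\ref{c:lem:decomposition} that this coefficient is also positive. Failing that, I would merge it with the $p=m$ term through the difference $1/(m+\varthetax)-1/(m+1+\varthetax)$. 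The isolated term $-6(d-3)(d-1)/(d(3d-7)(d-3-\varthetax))$ has a negative coefficient and a pole $d-3>n-1$. By the right-hand analogue (Lemma~\ref{c:lem:euler} with $\beta=d-3$), the quotient $((d-3-\varthetax)^{-1}P_{2,d-2})/P_{2,d-2}$ has negative residues, so this term contributes positive residues.

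The right-pole sum with coefficients $c^{\mathrm R}_{m,d}(p)$ equals
\[
 -md\binom{m+d}{d}\int_0^\infty e^{-(d-1)t/2}g(t)P_{2,d-2}(e^tx)\,dt.
\]
I would invoke Lemma~\ref{c:lem:right}, which presumably states that for a positive nonincreasing weight $g$ this integral, divided by $P_{2,d-2}(x)$, has negative residues at the zeros; the negative prefactor then makes the contribution positive. The remaining task is to show that $g$ is positive and nonincreasing. By \eqref{c:eq:weightderivative}, hypotheses~(\ref{c:cond:kernel}) and~(\ref{c:cond:gap}) give $-g'(t)>0$ for all $t\ge0$, because the integrand $z^{-a_d}S_{m,d}(z)$ is nonnegative on $[e^{-t},1]$. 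So $g$ is strictly decreasing, and since $g(t)\to0$ as $t\to\infty$, we get $g>0$.

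The main obstacle is bookkeeping rather than analysis. The hard step is to check that every term of \eqref{c:eq:decomp} is covered by one of these sign arguments, in particular the sign of $c^{\mathrm L}_{m,d}(m+1)$. It would also remain to verify that the conditions on $g$ are exactly those required by Lemma~\ref{c:lem:right}.
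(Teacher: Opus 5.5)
Your term-by-term sign strategy has a genuine gap at $p=m+1$, and neither of your two remedies closes it. The coefficient $c^{\mathrm L}_{m,d}(m+1)$ is in fact negative for every $m\ge3$. In the construction of Lemma~\ref{c:lem:decomposition}, the only contribution at $u=-m-1$ is $\tau_d(-m-1)>0$ times the residue $-md\binom{m+d}{d}\frac{d+m-1}{d+2m}<0$ of $R_{m,d}(u)$. The extra positive piece from $-\nu_d(u)$ sits at $u=-3$, so it helps only when $m=2$.

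Pairing this term with the $p=m$ term does not work either, because $|c^{\mathrm L}_{m,d}(m+1)|>c^{\mathrm L}_{m,d}(m)$. For instance, when $m=3$ and $d$ is large, $c^{\mathrm L}_{3,d}(3)\sim d^3/6$ while $|c^{\mathrm L}_{3,d}(4)|\sim d^3/4$.

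What is missing is a constraint that ties all the left coefficients together. It comes from information you never use, namely Proposition~\ref{c:prop:leading}. The left side of \eqref{c:eq:decomp} has degree $n-1$ and positive leading coefficient. The positive-pole terms (the $g$-integral and the $d-3$ term) have negative leading coefficient. Comparing coefficients of $x^{n-1}$ therefore gives $\sum_{p}c^{\mathrm L}_{m,d}(p)/(n+p-1)>4$.

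The paper then uses Lemma~\ref{c:lem:left}. At a zero $r$, the normalized residues $w_p=(n+p-1)\bigl((p+\vartheta_x)^{-1}P_{2,d-2}\bigr)(r)/P_{2,d-2}'(r)$ are positive and strictly decreasing in $p$. Since $c^{\mathrm L}_{m,d}(p)>0$ for all $p<m+1$,
\[
 \sum_{p}c^{\mathrm L}_{m,d}(p)\frac{w_p}{n+p-1}
 \ge w_{m+1}\sum_{p}\frac{c^{\mathrm L}_{m,d}(p)}{n+p-1}>4w_{m+1}>0.
\]
This Abel-type bound is how the paper shows that the whole negative-pole contribution has positive residues, despite one negative coefficient. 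Your proposal lacks this step.

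Two of your supporting justifications are also invalid as stated. First, the sequence $1/(k+p)$ is not a multiplier sequence in the sense of Lemma~\ref{c:lem:strictmult}. For $p=1$ its Jensen polynomial is $((1+x)^n-1)/(nx)$, which has non-real zeros once $n\ge3$. Second, applying Lemma~\ref{c:lem:euler} to $(p+\vartheta_x)^{-1}P_{2,d-2}(x)$ or to $(d-3-\vartheta_x)^{-1}P_{2,d-2}(x)$ is circular, since that lemma assumes its input already has simple negative zeros.

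The paper gets the left-pole signs from Lemma~\ref{c:lem:left}, which exploits the factorial structure of \eqref{c:eq:Kseed} through $f_d$ and $\mathscr L_{q-1}$. It gets the $d-3$ term from Lemma~\ref{c:lem:right} with $g\equiv1$ and $c=d-3>(d-2)/2$.

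Your treatment of the $g$-integral matches the paper. Conditions (ii) and (iii) give $-g'>0$, $g\to0$ then gives $g>0$, and Lemma~\ref{c:lem:right} applies with $c=(d-1)/2$.
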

We now prove the expansion and the two residue comparisons needed for
the criterion.

\begin{proof}[Proof of Lemma~\ref{c:lem:decomposition}]
For integers $d\ge5$ and $m\ge1$, define the rational function
\begin{equation}\label{c:eq:Rsymbol}
 R_{m,d}(u)=\frac{(m+d)!(d-2-u)}{(d-1)!}\sum_{h=0}^{m-1}
 \frac1{h!(u+h+1)(u+h+2)(d-u+h)_{m-h}}.
\end{equation}
We compare the coefficients in \eqref{c:eq:coeff} and \eqref{c:eq:seed}
with indices $k+1$ and $k$, respectively, to obtain
\[
 P_{m,d}(x)=1+xR_{m,d}(\varthetax)P_{1,d-2}(x).
\]

Set
\begin{equation*}
 \tau_d(u)=\frac{(u+2)(d-u)}{d(u(d-u)+2)},\qquad
 \nu_d(u)=\frac{(d-u)(d-3-2u)(d-4-2u)
 ((u+1)(d-u-1)+2)}
 {(u+1)(u+3)(d-3-u)(u(d-u)+2)}.
\end{equation*}
We use the coefficient ratio \eqref{c:eq:Kcoeff} and compare consecutive
coefficients in \eqref{c:eq:Kseed} to obtain
\[
 \begin{aligned}
 P_{1,d-2}(x)&=\tau_d(\varthetax)P_{2,d-2}(x),\\
 \frac{P_{2,d-2}(x)-1}x&=\nu_d(\varthetax)P_{2,d-2}(x).
 \end{aligned}
\]

The second identity includes the highest coefficient index $k=n-1$,
since one of the factors $d-3-2k$ and $d-4-2k$ vanishes.
We combine these identities with \eqref{c:eq:Kidentity} and
the representation of $P_{m,d}(x)$ above to obtain
\begin{equation}\label{c:eq:symbolmain}
 \frac{F_{m,d}(x)-P_{2,d-2}(x)}{x}=\left(\tau_d(\varthetax)(R_{m,d}(\varthetax)-d\bar\lambda_{m,d})+\bar\lambda_{m,d}-\nu_d(\varthetax)\right)P_{2,d-2}(x).
\end{equation}

We expand
\[
 \tau_d(u)(R_{m,d}(u)-d\bar\lambda_{m,d})
 +\bar\lambda_{m,d}-\nu_d(u)
\]
into partial fractions. We let $c_{m,d}^{\mathrm L}(p)$ denote the coefficient of $(u+p)^{-1}$
for $p\in\{a_d,3,\ldots,m+1\}$, and let $c_{m,d}^{\mathrm R}(p)$
denote the coefficient of $(d+p-u)^{-1}$
for $p\in\{a_d,1,\ldots,m-1\}$.
In particular, $p=a_d$ gives
the nonintegral poles $-a_d$ and $b_d=d+a_d$.

We first compute the residues at the negative integer poles.
For $1\le i\le m$, only the summands with $h=i-1$ and, when $i\ge2$,
$h=i-2$ in
\eqref{c:eq:Rsymbol} contribute at $u=-i$. Their residues sum to
\[
 \frac{(m+d)!(d+i-2)(d+i-1)(d+2i-3)!}
 {(d-1)!(i-1)!(d+m+i-1)!}>0.
\]
At $u=-m-1$, only $h=m-1$ contributes, with residue
\[
 -md\binom{m+d}{d}\frac{d+m-1}{d+2m}.
\]
At $u=-1$, both $\tau_d(u)R_{m,d}(u)$ and $\nu_d(u)$ have
residue $-(d+1)$, so their difference has no pole there.
The factor $\tau_d(u)$ cancels the pole at $-2$. For $i\ge3$, we have
\[
 \tau_d(-i)=\frac{(i-2)(d+i)}{d(i(d+i)-2)}>0.
\]
The only additional contribution at these integer poles comes from
$-\nu_d(u)$ at $u=-3$, with positive residue
\[
 \frac{(d+1)(d+2)(d+3)^2}{d(3d+7)}.
\]
At these integer poles, $c_{m,d}^{\mathrm L}(i)>0$ for $3\le i\le m$.
Among the coefficients at negative integer poles, only
$c_{m,d}^{\mathrm L}(m+1)$ can be negative.

For $0\le j\le m-1$, only the summands with $0\le h\le j$ in
\eqref{c:eq:Rsymbol} contribute at $u=d+j$.
We expand $(1-t)^j$ and integrate each term against $t^{d+j}(1-t)$
to express the coefficient of $(d+j-u)^{-1}$ as
\[
 \frac{(-1)^{j+1}(m+d)!(j+2)}{(d-1)!j!(m-1-j)!}
 \int_0^1t^{d+j}(1-t)^{j+1}\,dt.
\]
The factor $\tau_d(u)$ cancels the pole at $d$. For the remaining
positive integer poles, we evaluate the beta integral and multiply
by $\tau_d(d+j)$ to obtain
\begin{equation}\label{c:eq:rightinteger}
 c_{m,d}^{\mathrm R}(j)=(-1)^{j+1}
 \frac{(m+d)!j(j+1)(j+2)(d+j+2)(d+j)!}
 {d!(m-1-j)!(d+2j+2)!(j(d+j)-2)},
 \qquad 1\le j\le m-1.
\end{equation}
These coefficients alternate in sign.

We expand $\tau_d(u)$ into partial fractions as
\[
 \tau_d(u)=\frac1d+\frac{2(b_d-1)}{d(a_d+b_d)(u+a_d)}
 -\frac{2(a_d+1)}{d(a_d+b_d)(b_d-u)}.
\]
For the pole at $-a_d$, set
\[
 \rho_d=\frac{db_d(a_d+b_d-4)(a_d+b_d-1)}
 {2(3(a_d+b_d)-11)}.
\]
This constant is positive.
We compute the coefficient of $(u+a_d)^{-1}$ in $\nu_d(u)$ as
\[
 \frac{2(b_d-1)\rho_d}{d(a_d+b_d)}.
\]
We substitute this residue into \eqref{c:eq:symbolmain} to obtain
\begin{equation}\label{c:eq:leftstar}
 c_{m,d}^{\mathrm L}(a_d)=\frac{2(b_d-1)}{d(a_d+b_d)}
 \bigl(R_{m,d}(-a_d)-d\bar\lambda_{m,d}-\rho_d\bigr).
\end{equation}
For the pole at $b_d$, set
\[
 \begin{aligned}
 \phi_d&=\frac{(a_d+1)(a_d+b_d+1)}{d(a_d+b_d)},\qquad
 \eta_d=\frac{(a_d+b_d+1)^2(a_d+b_d+3)(a_d+b_d+4)}
 {b_d(a_d+b_d)(3(a_d+b_d)+11)}.
 \end{aligned}
\]
These constants are positive.
We compute the coefficient of $(b_d-u)^{-1}$ in $\nu_d(u)$ as
\[
 -\frac{2\eta_d}{(a_d+b_d+1)(b_d+1)},
\]
and we substitute this value into \eqref{c:eq:symbolmain} to obtain
\begin{equation}\label{c:eq:rightstar}
 c_{m,d}^{\mathrm R}(a_d)=-\frac{2}{a_d+b_d+1}
 \left(\phi_d\bigl(R_{m,d}(b_d)-d\bar\lambda_{m,d}\bigr)
 -\frac{\eta_d}{b_d+1}\right).
\end{equation}
The term with denominator $d-3-u$ comes from $-\nu_d(u)$ and has the
coefficient in \eqref{c:eq:decomp}.

The rational functions corresponding to the operators in
\eqref{c:eq:decomp} and \eqref{c:eq:symbolmain} have only simple poles.
They have the same residues at all possible poles, and both tend to
$-4$ as $u\to\infty$.
They therefore coincide. To obtain \eqref{c:eq:decomp}, we substitute
$\varthetax$ for $u$ and apply the operators to $P_{2,d-2}(x)$.
\end{proof}

To control the possibly negative coefficient at $-m-1$, we normalize
the polynomials $(p+\varthetax)^{-1}P_{2,d-2}(x)$ to have the same
leading coefficient as $P_{2,d-2}(x)$, then compare the residues of their
quotients by $P_{2,d-2}(x)$. For $p>0$, the required normalization factor
is $n+p-1$.

\begin{lemma}\label{c:lem:left}
Let $d\ge5$ be an integer. For each zero $r$ of $P_{2,d-2}(x)$, the sequence indexed by $p=a_d,3,4,\ldots$,
\[
 (n+p-1)\frac{\left.(p+\varthetax)^{-1}P_{2,d-2}(x)\right|_{x=r}}{P_{2,d-2}'(r)},
\]
is positive, strictly decreasing, and converges to zero.
\end{lemma}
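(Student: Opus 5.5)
The plan is to work with the normalized polynomials $N_p(x)=(n+p-1)(p+\varthetax)^{-1}P_{2,d-2}(x)$. These have degree $n-1$ and the same leading coefficient as $P_{2,d-2}(x)$, since $(p+\varthetax)^{-1}$ multiplies the top coefficient by $1/(n-1+p)$. The quantity in the statement is $w_r(p)=N_p(r)/P_{2,d-2}'(r)$, the residue of $N_p(x)/P_{2,d-2}(x)$ at $r$. I would use the coefficient-wise integral representation
\[
 (p+\varthetax)^{-1}f(x)=\int_0^1 t^{p-1}f(tx)\,dt,\qquad p>0,
\]
throughout. This representation makes every quantity a continuous, indeed real-analytic, function of the real parameter $p>0$. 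I would prove the three claims for all real $p\ge a_d$, not only for $p\in\{a_d,3,4,\ldots\}$.

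\emph{Positivity.} For $p=a_d$, the proof of Lemma~\ref{c:lem:comparisonzeros} already gives $(a_d+\varthetax)^{-1}P_{2,d-2}(x)\prec P_{2,d-2}(x)$ with positive residues. For general $p>0$, I would first show that $(p+\varthetax)^{-1}P_{2,d-2}(x)$ has simple negative zeros. I would do this by applying Lemma~\ref{c:lem:strictmult} to the multiplier $1/(k+p)$. Its Jensen polynomial is $\int_0^1t^{p-1}(1+tx)^{n-1}\,dt$, a terminating ${}_2F_1$ of Jacobi type, and I would cite its negative real-rootedness, or obtain it from $1/(k+p)$ being a classical multiplier sequence. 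Lemma~\ref{c:lem:euler} with $\alpha=p$ and $f=(p+\varthetax)^{-1}P_{2,d-2}$ then gives $f\prec(p+\varthetax)f=P_{2,d-2}$. Lemma~\ref{c:lem:residue}(ii) then yields $w_r(p)>0$.

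\emph{Limit.} From the integral formula,
\[
 w_r(p)=\frac{n+p-1}{P_{2,d-2}'(r)}\int_0^1t^{p-1}P_{2,d-2}(tr)\,dt .
\]
Since $P_{2,d-2}(r)=0$, we have $P_{2,d-2}(tr)=O(1-t)$ near $t=1$. Hence the integral is $O(p^{-2})$, and multiplying by $n+p-1$ gives $w_r(p)\to0$.

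\emph{Monotonicity.} This is the main obstacle. Differentiating in $p$ gives
\[
 \frac{\partial}{\partial p}\frac{n+p-1}{p+u}=-\frac{n-1-u}{(p+u)^2},
\]
so $w_r'(p)$ is minus the residue at $r$ of $(p+\varthetax)^{-2}G(x)/P_{2,d-2}(x)$, where $G=(n-1-\varthetax)P_{2,d-2}$. Here $G$ has degree $n-2$, because the top coefficient is killed, and at every zero $r$ the residue of $G/P_{2,d-2}$ equals $-r>0$. By Lemma~\ref{c:lem:residue}(iii), $G\prec P_{2,d-2}$. The task is to show that the residues stay positive after applying $(p+\varthetax)^{-2}$, and this is where transitivity of interlacing fails, so some care is needed.

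I would first try to rewrite $(p+\varthetax)^{-2}G$ through the identity $(p+\varthetax)^{-2}(n-1-\varthetax)=(n-1+p)(p+\varthetax)^{-2}-(p+\varthetax)^{-1}$. This gives
\[
 -w_r'(p)=\operatorname{Res}_r\frac{\bigl((n-1+p)(p+\varthetax)^{-1}-1\bigr)(p+\varthetax)^{-1}P_{2,d-2}}{P_{2,d-2}},
\]
which is the same comparison applied to $f=(p+\varthetax)^{-1}P_{2,d-2}$ in place of $P_{2,d-2}$. I would then try to prove that the numerator interlaces $P_{2,d-2}$ on the correct side. The plan is to combine $f\prec P_{2,d-2}$, the relation $(p+\varthetax)^{-1}f\prec f$ with equal normalized leading coefficients, and Obreschkoff's theorem applied to the pencil spanned by $(n-1+p)(p+\varthetax)^{-1}f$ and $f$. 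If this fails, the fallback is a continuity argument. The residues $w_r(p)$ are positive and analytic in $p$ and tend to $0$. A critical point $w_r'(p_0)=0$ would force a multiple zero in a real-rooted pencil of the form $\mu N_{p_0}+\partial_pN_{p_0}$, and I would try to exclude this using simplicity from Lemma~\ref{c:lem:strictmult}. Strict decrease on the discrete set $p=a_d,3,4,\ldots$ then follows from strict decrease on $[a_d,\infty)$.
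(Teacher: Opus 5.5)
The central claim, strict decrease, is not proved. Your reduction of $-w_r'(p)$ to the residue at $r$ of $(p+\varthetax)^{-2}(n-1-\varthetax)P_{2,d-2}(x)/P_{2,d-2}(x)$ is correct. Neither of your routes, however, yields the needed interlacing $(p+\varthetax)^{-2}(n-1-\varthetax)P_{2,d-2}(x)\prec P_{2,d-2}(x)$.

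\begin{itemize}
\item[] \emph{The pencil route.} It would need $(p+\varthetax)^{-2}P_{2,d-2}(x)$ to be real-rooted, which you have not shown. It then needs a transitivity of interlacing that, as you note yourself, fails.
\item[] \emph{The continuity fallback.} This is not an argument. A positive analytic function tending to zero can have critical points. Moreover, $w_r'(p_0)=0$ says only that $\partial_pN_{p_0}$ vanishes at one zero $r$ of $P_{2,d-2}(x)$. That forces a multiple zero in no pencil unless you already know that $\partial_pN_{p_0}$ interlaces $P_{2,d-2}(x)$, which is the claim at issue.
\end{itemize}

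The missing idea is to work discretely with consecutive parameters $(p,q)=(a_d,3)$ or $(i,i+1)$. The identity
\[
 \frac{n+p-1}{u+p}-\frac{n+q-1}{u+q}=(q-p)\frac{n-1-u}{(u+p)(u+q)}
\]
reduces strict decrease to $(n-1-\varthetax)((p+\varthetax)(q+\varthetax))^{-1}P_{2,d-2}(x)\prec P_{2,d-2}(x)$. This pair is the image of the Rolle-type pair $\mathscr L_{q-1}'(x)\prec(p+\varthetax)\mathscr L_{q-1}(x)$ under the diagonal operator $T$ defined by $T((p+\varthetax)\mathscr L_{q-1}(x))=P_{2,d-2}(x)$. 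The identity $(k+1)\binom{n-1}{k+1}=(n-1-k)\binom{n-1}{k}$, together with the shift from $1/(k+q-1)!$ to $1/(k+q)!$ under differentiation, produces exactly the factors $n-1-k$ and $1/(k+q)$. By \eqref{c:eq:Kseed}, the Jensen polynomial of $T$ is obtained from $d!f_d(x)$ by the multiplier $1/(d-k)!$ and the operator $b_d-\varthetax$. For $(p,q)=(i,i+1)$ one then also applies $(a_d+\varthetax)\prod_{j=3}^{i-1}(j+\varthetax)$. Hence Lemma~\ref{c:lem:strictmult} applies. This uses in an essential way that $q$ is an integer and $p\in\{a_d,q-1\}$, since only then is $(k+a_d)(k+q-1)!/((k+p)(k+2)!)$ a product of Euler factors. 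That is why your strengthening to all real $p\ge a_d$ is out of reach with these tools.

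Your positivity step is also wrongly justified. Lemma~\ref{c:lem:strictmult} does not apply to $\gamma_k=1/(k+p)$. This sequence is strictly log-convex, since $(k+p)^2>(k+p-1)(k+p+1)$. By Newton's inequalities, $\sum_k\binom{n-1}{k}x^k/(k+p)$ therefore has nonreal zeros for every $p>0$ once $n\ge3$; for instance, at $p=1$ it equals $((1+x)^n-1)/(nx)$. For the same reason, $1/(k+p)$ is not a multiplier sequence. So you have no proof that $(p+\varthetax)^{-1}P_{2,d-2}(x)$ is real-rooted for general real $p$.

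For integers $p=i\ge3$ you could recover real-rootedness from \eqref{c:eq:Kseed}, using $1/((k+2)!(k+i))=\prod_{j=3}^{i-1}(k+j)/(k+i)!$. That detour is unnecessary, though. Your integral argument for the limit is correct, and positivity follows at once from strict decrease to zero, as in the paper.
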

\begin{proof}
For $n=2$, we write $P_{2,d-2}(x)=1-x/r$ and obtain $-r/p$ for
the displayed expression. Since $r<0$, we obtain the assertion
and may henceforth assume $n\ge3$.

For successive parameters $(p,q)=(a_d,3)$ or $(i,i+1)$ with $i\ge3$, we have
\[
 \frac{n+p-1}{u+p}-\frac{n+q-1}{u+q}
 =(q-p)\frac{n-1-u}{(u+p)(u+q)}.
\]
We apply Lemma~\ref{c:lem:residue} to reduce strict decrease to
\[
 (n-1-\varthetax)((p+\varthetax)(q+\varthetax))^{-1}P_{2,d-2}(x)
 \prec P_{2,d-2}(x).
\]
At each zero of $\mathscr L_{q-1}'(x)$, the identity
$(p+\varthetax)\mathscr L_{q-1}(x)=p\mathscr L_{q-1}(x)+x\mathscr L_{q-1}'(x)$
shows that $(p+\varthetax)\mathscr L_{q-1}(x)$ has the same sign as
$\mathscr L_{q-1}(x)$. We apply Rolle's theorem and compare leading coefficients
to obtain
\[
 \mathscr L_{q-1}'(x)\prec(p+\varthetax)\mathscr L_{q-1}(x).
\]
Define the linear operator $T$ on $\mathbb R[x]_{\le n-1}$ to be
diagonal in the monomial basis and to satisfy
\[
 T\bigl((p+\varthetax)\mathscr L_{q-1}(x)\bigr)=P_{2,d-2}(x).
\]
Both polynomials have positive coefficients in every degree from $0$
to $n-1$, so this identity uniquely determines $T$. The matrix of $T$
in the monomial basis has positive diagonal entries.
By \eqref{c:eq:Kseed}, the map that multiplies the coefficient of $x^k$
by $1/(d-k)!$ and the operator $b_d-\varthetax$ send $d!f_d(x)$
to the Jensen polynomial of $T$
when $(p,q)=(a_d,3)$. For $(p,q)=(i,i+1)$, we additionally apply
\[
 (a_d+\varthetax)\prod_{j=3}^{i-1}(j+\varthetax).
\]
These operations give simple negative zeros, so
Lemma~\ref{c:lem:strictmult} applies to $T$.
Using $(k+1)\binom{n-1}{k+1}=(n-1-k)\binom{n-1}{k}$,
we compare coefficients to obtain
\[
 T\bigl(\mathscr L_{q-1}'(x)\bigr)
 =(n-1-\varthetax)((p+\varthetax)(q+\varthetax))^{-1}P_{2,d-2}(x).
\]
We apply $T$ to obtain strict interlacing. The polynomial on the left has
degree $n-2$ and positive leading coefficient, which proves the
strict decrease of the sequence.
Finally, for $0\le k\le n-1$, we have
\[
 \frac{n+p-1}{k+p}\to1
 \qquad\text{as the integer }p\to\infty.
\]
Thus the terms tend to
\[
 \frac{P_{2,d-2}(r)}{P_{2,d-2}'(r)}=0.
\]
From strict decrease to zero, we also obtain their positivity.
\end{proof}

For the positive poles, we use the following comparison for a positive
nonincreasing weight.
\begin{lemma}\label{c:lem:right}
Let $d\ge5$ be an integer and $c>(d-2)/2$. Let
$g\in C^1([0,\infty))$ be positive and
nonincreasing. Then every zero $r$ of $P_{2,d-2}(x)$ satisfies
\[
 \frac{1}{P_{2,d-2}'(r)}\int_0^\infty e^{-ct}g(t)P_{2,d-2}(e^t r)\,dt<0.
\]
In particular, taking $g(t)=1$ gives the corresponding inequality for
$(c-\varthetax)^{-1}P_{2,d-2}(x)$.
\end{lemma}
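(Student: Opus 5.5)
The plan is to first settle the case $g\equiv1$, then pass to a general weight by a layer-cake decomposition of $g$, which reduces everything to a sign statement about one auxiliary function of one variable.

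\textbf{The case $g\equiv 1$.} Write $P=P_{2,d-2}$ and $N=n-1=\deg P$. Since $n=\lfloor (d-1)/2\rfloor$, we have $N\le (d-3)/2<c$. Term by term,
\[
 \int_0^\infty e^{-ct}e^{kt}\,dt=\frac1{c-k},
 \qquad\text{so}\qquad
 \int_0^\infty e^{-ct}P(e^tx)\,dt=Q(x),\quad Q:=(c-\varthetax)^{-1}P .
\]
All coefficients of $Q$ are positive and $\deg Q=N$. To get simple negative zeros for $Q$, I would apply Lemma~\ref{c:lem:strictmult} to the diagonal operator with multipliers $1/(c-k)$. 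Its Jensen polynomial is $\sum_k\binom{N}{k}x^k/(c-k)$, which equals $\int_0^\infty e^{-(c-N)t}(e^{-t}+x)^N\,dt$ after rescaling. I expect its zeros to be negative, either via the Laguerre-type argument used for $\mathscr L_s$ or via Br\"and\'en's result cited before Lemma~\ref{c:lem:euler}.

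Lemma~\ref{c:lem:euler} with $f=Q$ and $\beta=c>N$ then gives $P=(c-\varthetax)Q\prec Q$. Lemma~\ref{c:lem:residue}(i) turns this into $Q(r)/P'(r)<0$, which is the ``in particular'' claim.

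\textbf{General $g$.} Write $g(t)=g(\infty)+\int_t^\infty(-g'(s))\,ds$, where $g(\infty)\ge0$ and $-g'\ge0$. By Fubini, the integral in the statement equals
\[
 g(\infty)\,Q(r)+\int_0^\infty(-g'(s))\,I(s)\,ds,
 \qquad I(s)=\int_0^s e^{-ct}P(e^tr)\,dt .
\]
Since $\frac{d}{dt}\bigl(e^{-ct}Q(e^tx)\bigr)=-e^{-ct}P(e^tx)$, we get
\[
 I(s)=Q(r)-e^{-cs}Q(e^sr).
\]
It therefore suffices to prove $I(s)/P'(r)<0$ for every $s>0$. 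Because $g>0$, either $g(\infty)>0$, or $-g'$ is positive on a set of positive measure, so the inequality will then be strict.

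\textbf{The key sign statement (main obstacle).} Set $\psi(y)=y^{-c}Q(ry)$ for $y\ge1$. The claim becomes
\[
 \operatorname{sgn}Q(r)\,\bigl(\psi(1)-\psi(y)\bigr)>0\qquad\text{for all }y>1,
\]
using that $Q(r)$ has the sign of $-P'(r)$. Its derivative is $\psi'(y)=-y^{-c-1}P(ry)$.

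Near $y=1$ the claim holds, because $P(ry)\approx P'(r)\,r(y-1)$ has the sign of $-P'(r)$. As $y\to\infty$, $\psi(y)\to0$ since $c>N$.

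The critical points of $\psi$ on $(1,\infty)$ are the points $y_j=r_j/r$, where $r_j<r$ runs over the zeros of $P$ to the left of $r$. The local extreme values there are $|r_j/r|^{-c}Q(r_j)$. So the claim reduces to showing, for consecutive zeros $r'<r$ of $P$,
\[
 |r'|^{-c}\,|Q(r')|<|r|^{-c}\,|Q(r)| ,
\]
that is, $|x|^{-c}|Q(x)|$ strictly decreases as $x$ moves left along the zeros of $P$. The signs of $Q$ at these points alternate by the interlacing $P\prec Q$, which is exactly what makes this comparison the relevant one.

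I expect this comparison to be the hardest step. My first attempt would be to write
\[
 |x|^{-c}|Q(x)|=\int_0^\infty e^{-ct}\,|x|^{-c}\,\bigl|P(e^tx)\bigr|\,dt
\]
and compare the two resulting integrals after substituting $u=e^t|x|$. Failing that, I would apply the same interlacing machinery to the pair $Q$ and $(c-\varthetax)^{-1}Q$, and compare the values at $r$ and $r'$ using residues.
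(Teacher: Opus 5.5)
Your proposal has a genuine gap, and it already appears in the case $g\equiv1$. The multipliers $\gamma_k=1/(c-k)$ are strictly log-convex, because $\gamma_{k-1}\gamma_{k+1}=1/((c-k)^2-1)>\gamma_k^2$. By Newton's inequalities, the Jensen polynomial $\sum_k\binom Nk x^k/(c-k)$ therefore has nonreal zeros for every $c>N$ once $N\ge2$. For $N=2$ its discriminant is $4/(c-1)^2-4/(c(c-2))<0$. So Lemma~\ref{c:lem:strictmult} does not apply, you have no real-rootedness for $Q$, and Lemma~\ref{c:lem:euler} cannot be invoked.

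More fundamentally, your argument only uses $c>N$ and never the hypothesis $c>(d-2)/2$. The statement is false under the weaker assumption. For $d=7$ we have $P_{2,5}(x)=1+28x+21x^2$. For $c=2.01$ the polynomial $Q(x)=\frac1{2.01}+\frac{28}{1.01}x+2100x^2$ has no real zeros. At the zero $r_2\approx-0.0367$ we get $Q(r_2)\approx2.31$ and $P_{2,5}'(r_2)\approx26.46$, so $Q(r_2)/P_{2,5}'(r_2)>0$.

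Your layer-cake and Fubini reduction of general $g$ to the truncated integrals $\psi(1)-\psi(y)$ is correct. However, the comparison $|r'|^{-c}|Q(r')|<|r|^{-c}|Q(r)|$ to which you reduce it is left unproved. Neither suggested route shows where $c>(d-2)/2$ would enter. Without that hypothesis the comparison also fails: in the same example, with $r_1\approx-1.297$, one finds $|r_1|^{-c}|Q(r_1)|\approx2.07\times10^3>|r_2|^{-c}|Q(r_2)|\approx1.77\times10^3$.

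The missing idea is that the threshold $(d-2)/2$ cannot be detected by diagonal-operator arguments applied to $P_{2,d-2}$ alone. It comes from the Chebyshev structure of $f_d$, and the paper first proves the inequality for $f_d$. With $r=-1/(4z^2)$ and $u=ze^{-t/2}$, identity \eqref{c:eq:Cheb} turns the integral into a positive multiple of $\int_0^z w(u)U_{d-3}(u)\,du$, where $w(u)=u^{2c-d+2}g(2\log(z/u))$. This weight is increasing with $w(0^+)=0$ precisely because $2c-d+2>0$ and $g$ is positive and nonincreasing. Integrating by parts against $T_{d-2}'=(d-2)U_{d-3}$ gives
$T_{d-2}(z)\int_0^z wU_{d-3}\,du=\frac1{d-2}\int_0^z w'(u)\bigl(1-T_{d-2}(z)T_{d-2}(u)\bigr)\,du>0$.
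This treats every admissible $g$ at once, so neither a separate case $g\equiv1$ nor a layer-cake decomposition is needed. The resulting strict interlacing is then transferred to $P_{2,d-2}$ by the diagonal operator of \eqref{c:eq:Kseed}. That operator is a genuine multiplier sequence, so Lemma~\ref{c:lem:strictmult} applies, and it commutes with $x\mapsto e^tx$ and with the integral.
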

\begin{proof}
We first prove the assertion with $f_d(x)$ in place of $P_{2,d-2}(x)$.
Since $c>\deg f_d(x)$ and $g$ is positive and bounded, we integrate each
term to obtain a polynomial of the same degree with positive coefficients.
Fix $z\in(0,1)$ with $U_{d-3}(z)=0$ and put $r=-1/(4z^2)$.
We substitute $u=ze^{-t/2}$ in \eqref{c:eq:Cheb} to obtain the weight
\[
 w(u)=u^{2c-d+2}g(2\log(z/u)),\qquad 0<u\le z.
\]
Since $2c-d+2>0$ and $0<g(t)\le g(0)$, we have
$\lim_{u\to0^+}w(u)=0$.
Differentiating $w(u)$, we obtain
\[
 w'(u)=u^{2c-d+1}\bigl((2c-d+2)g(2\log(z/u))-2g'(2\log(z/u))\bigr)>0.
\]
Let $T_{d-2}(z)$ be the Chebyshev polynomial of the first kind, so
$T_{d-2}(\cos\theta)=\cos((d-2)\theta)$ for $0<\theta<\pi$
\cite[Eq.~(18.5.1)]{DLMF}. At the zero we fixed, $T_{d-2}(z)=\pm1$.
Using $T_{d-2}'(z)=(d-2)U_{d-3}(z)$ \cite[Eq.~(18.9.21)]{DLMF},
we integrate by parts to obtain
\[
 T_{d-2}(z)\int_0^z w(u)U_{d-3}(u)\,du
 =\frac1{d-2}\int_0^z w'(u)
       (1-T_{d-2}(z)T_{d-2}(u))\,du>0.
\]
The integrand is nonnegative because $|T_{d-2}(u)|\le1$.
Since $w'(u)>0$ and $T_{d-2}(u)$ is nonconstant, the integrand is
positive on a subinterval. The improper integrals converge since
$\int_0^z w'(u)\,du=w(z)<\infty$.
Reversing the substitution, we find that the
integral with $f_d(x)$ in place of $P_{2,d-2}(x)$ has the sign of $T_{d-2}(z)$
at $r$. At the zero we fixed, we have
\[
 U_{d-3}'(z)=-\frac{(d-2)T_{d-2}(z)}{1-z^2}
\]
by \cite[Eq.~(18.9.22)]{DLMF}. Differentiating \eqref{c:eq:Cheb},
we see that $f_d'(r)$ has the opposite sign.
Lemma~\ref{c:lem:residue} then gives
\[
 f_d(x)\prec\int_0^\infty e^{-ct}g(t)f_d(e^t x)\,dt.
\]
We apply the diagonal operator in \eqref{c:eq:Kseed}. It commutes with
the substitution $x\mapsto e^t x$ and with integration, and it preserves
strict interlacing by Lemma~\ref{c:lem:strictmult}.
We then apply Lemma~\ref{c:lem:residue}(i) to obtain
the required signs for $P_{2,d-2}(x)$. When $g(t)=1$, we integrate each monomial to identify
the integral with $(c-\varthetax)^{-1}P_{2,d-2}(x)$.
\end{proof}

\begin{proof}[Proof of Proposition~\ref{c:th:criterion}]
Using $b_d=d+a_d$, we integrate the monomials in $S_{m,d}(z)$ to obtain
\eqref{c:eq:weightderivative}.
Conditions~(\ref{c:cond:kernel}) and~(\ref{c:cond:gap}), together with
\eqref{c:eq:weightderivative}, give $-g'(t)>0$ for every $t\ge0$.
Since $g(t)\to0$, the weight is positive.
We apply Lemma~\ref{c:lem:right} with
$c=(d-1)/2$ to the integral involving $g(t)$, and with $c=d-3$ and $g(t)=1$ to
the term at the pole $d-3$. Both values of $c$ exceed $(d-2)/2$.
The sum of the terms corresponding to the positive poles in \eqref{c:eq:decomp},
including the term at $d-3$, therefore has a negative leading
coefficient and, after division by $P_{2,d-2}(x)$, positive residues.

Since $F_{m,d}(0)=P_{2,d-2}(0)=1$, the left-hand side of
\eqref{c:eq:decomp} is a polynomial. Proposition~\ref{c:prop:leading}
shows that it has degree $n-1$ and positive leading coefficient.
Since the sum corresponding to the positive poles has negative leading coefficient,
we compare leading coefficients in \eqref{c:eq:decomp} to obtain
\[
 \frac{c_{m,d}^{\mathrm L}(a_d)}{n+a_d-1}
 +\sum_{i=3}^{m+1}\frac{c_{m,d}^{\mathrm L}(i)}{n+i-1}>4.
\]
Fix a zero $r$ of $P_{2,d-2}(x)$. The sequence in Lemma~\ref{c:lem:left}
attains its positive minimum over $p\in\{a_d,3,\ldots,m+1\}$ at $p=m+1$.
Condition~(\ref{c:cond:left}) and Lemma~\ref{c:lem:decomposition} show that
all other coefficients $c_{m,d}^{\mathrm L}(p)$ are positive.
We replace the values for $p<m+1$ by this minimum and leave the term
with $p=m+1$ unchanged. This gives a lower bound equal to the minimum
times the sum in the preceding inequality. Hence the sum corresponding
to the negative poles has positive residues after division by $P_{2,d-2}(x)$.
Since $-4P_{2,d-2}(r)=0$, we combine the two contributions to obtain
\eqref{c:eq:endpointnodes}.
\end{proof}

\subsection{Positivity of the coefficient at \texorpdfstring{$-a_d$}{-a}}\label{sec:leftcoeff}
To verify condition~(\ref{c:cond:left}) of Proposition~\ref{c:th:criterion},
we construct a lower bound for
$R_{m,d}(-a_d)-d\bar\lambda_{m,d}$.
A recurrence for $R_{m,d}(u)$ shows that this lower bound increases with $m$,
so it suffices to show that its value at $m=2$ exceeds $\rho_d$.
\begin{proposition}\label{l:prop:smallpole}
For integers $d\ge9$ and $m\ge2$, we have
$c_{m,d}^{\mathrm L}(a_d)>0$.
\end{proposition}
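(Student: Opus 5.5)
By \eqref{c:eq:leftstar}, and since $b_d>1$ and $a_d+b_d>0$, the claim is equivalent to
\[
 R_{m,d}(-a_d)-d\bar\lambda_{m,d}>\rho_d .
\]
I will prove this by finding a lower bound $L_{m,d}$ for $R_{m,d}(-a_d)-d\bar\lambda_{m,d}$ that is nondecreasing in $m$. It then suffices to check that $L_{2,d}>\rho_d$ for $d\ge9$.

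\emph{Step 1: a lower bound for $R_{m,d}(-a_d)$.} Set $u=-a_d\in(-2/d,0)$ in \eqref{c:eq:Rsymbol}. Every summand is positive there, because $u+h+1>0$, $u+h+2>0$ and $d-u+h>0$. So $R_{m,d}(-a_d)$ is at least any partial sum. The dominant terms are $h=m-1$ and $h=m-2$, since $(d-u+h)_{m-h}$ is then of degree $1$ or $2$ in $d$. I would keep these two terms exactly and bound the rest below by $0$, or by their values with $u$ replaced by $0$ or $-2/d$ using monotonicity. For comparison, $d\bar\lambda_{m,d}=d\binom{m+d}{d}/(\max(m,d)+1)$.

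\emph{Step 2: monotonicity in $m$.} To pass from $m$ to $m+1$, I would look for a recurrence for $R_{m,d}(u)$, obtained from the relation $(m+d+1)!/(m+1-h)!$ versus $(m+d)!/(m-h)!$ between corresponding summands. Concretely, I would write
\[
 R_{m+1,d}(u)=\frac{m+d+1}{d-u+m}\cdot(\text{reweighted }R_{m,d}(u))+(\text{new }h=m\text{ term}),
\]
and compare the increment with $d(\bar\lambda_{m+1,d}-\bar\lambda_{m,d})$. In the range $m<d$ the ratio $\bar\lambda_{m+1,d}/\bar\lambda_{m,d}=(m+d+1)/(m+1)$ grows much faster than the prefactor, so the comparison needs care. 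The form I expect to work is a statement about the ratio: $R_{m,d}(-a_d)/(d\bar\lambda_{m,d})$ stays above a threshold that makes the difference grow, for instance by showing that $R_{m,d}(-a_d)\ge\kappa_{m,d}\,d\bar\lambda_{m,d}$ with $\kappa_{m,d}$ nondecreasing and $\kappa_{2,d}>1$. This is the step I expect to be the main obstacle, namely choosing $L_{m,d}$ so that both the recurrence inequality and the base case hold. I would first try $L_{m,d}$ equal to the last two summands minus $d\bar\lambda_{m,d}$. If that fails for $m$ near $d$, where $\max(m,d)$ switches, I would split into the cases $m\le d$ and $m>d$ and treat $m=d$ separately.

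\emph{Step 3: the base case $m=2$.} Here $\bar\lambda_{2,d}=(d+2)/2$ and $R_{2,d}(u)$ is an explicit rational function. Using $a_d(d+a_d)=2$, I would write $R_{2,d}(-a_d)-d(d+2)/2-\rho_d$ as a rational function of $d$ and $a_d$. Then I would bound $a_d$ between $2/d-8/d^3$ and $2/d$ and conclude positivity for $d\ge9$ by clearing denominators and checking a polynomial inequality in $d$. The hypothesis $d\ge9$ suggests that this base inequality is exactly where small $d$ fails, and the remaining ranks are left to Section~\ref{sec:lowranks}.
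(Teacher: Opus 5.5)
There is a genuine gap. It sits where you expected the obstacle, but it is more serious than a case split.

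\emph{Step~1 fails for large $m$.} As $m\to\infty$, both $R_{m,d}(-a_d)$ and $d\bar\lambda_{m,d}$ equal $m^{d-1}/(d-1)!+O(m^{d-2})$, whereas the last two summands are only $O(m^{d-2})$. For $d=10$, your two-term bound already falls below $d\bar\lambda_{m,d}$ at $m=8$. In fact the whole margin comes from the shift $u=-a_d$. Using $P_{m,d}(x)=1+xR_{m,d}(\vartheta_x)P_{1,d-2}(x)$ and \eqref{c:eq:coeff} with $k=1$, one gets $R_{m,d}(0)=[x]P_{m,d}(x)=\binom{m+d}{d-1}-(m+d)$. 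Since $d\bar\lambda_{m,d}=\binom{m+d}{d-1}$ for $m\ge d$, this gives $R_{m,d}(0)-d\bar\lambda_{m,d}=-(m+d)<0$. So no estimate that discards summands or moves $u$ toward $0$ can succeed. You must use the exact recurrence \eqref{c:eq:Rrec}, which your Step~2 correctly anticipates.

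\emph{The ratio mechanism is impossible.} Since $R_{m,d}(-a_d)/(d\bar\lambda_{m,d})\to1$, no nondecreasing $\kappa_{m,d}$ with $\kappa_{2,d}>1$ can stay below it.

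\emph{The exact difference is not monotone.} Set $D_m=R_{m,d}(-a_d)-d\bar\lambda_{m,d}$. For $m\le d$, \eqref{c:eq:Rrec} gives
\[
 (m+b_d-1)D_m-(m+d)D_{m-1}
 =d\binom{m+d}{d}\left(\frac{m(b_d-2)}{(m-a_d)(m+1-a_d)}-\frac{b_d-1}{d+1}\right).
\]
The right-hand side is negative at $m=d$, where the maximum in $\bar\lambda_{m,d}$ switches. For $d=10$ one finds $D_9\approx1.31\cdot10^4$ but $D_{10}\approx7.6\cdot10^3$. Treating $m=d$ separately would need a lower bound for $D_{d-1}$ of order $\binom{2d}{d}/d$, far more than $D_{d-1}>\rho_d$. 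Your proposal gives no such estimate.

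The missing idea is to replace $\bar\lambda_{m,d}$ by a smooth majorant before running the induction. The paper sets
\[
 G_m=R_{m,d}(-a_d)-d\binom{m+d}{d}\left(\frac1{m+1}+\frac{2}{m(m-1)}\left(\frac1{d+1}-\frac1{m+1}\right)\right).
\]
The parenthesized factor is a convex combination of $1/(m+1)$ and $1/(d+1)$, hence at least $1/(\max(m,d)+1)$. So $G_m\le R_{m,d}(-a_d)-d\bar\lambda_{m,d}$, with equality at $m=2$. This majorant has no kink at $m=d$. Substituting it into \eqref{c:eq:Rrec} gives the identity \eqref{l:eq:majorantrec}, whose right-hand side is positive for every $m\ge3$ once $d\ge9$. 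Together with $(m+d)/(m+b_d-1)>1$, induction then gives $G_m>G_{m-1}>0$.

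This is also where $d\ge9$ enters. The base case $G_2>\rho_d$ is an exact rational identity in $b_d$ that is positive for every $d\ge5$. So your guess that the base case forces $d\ge9$ is mistaken, although your plan for that step is otherwise sound.
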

\begin{proof}
For every integer $d\ge5$, we split off the last summand in
\eqref{c:eq:Rsymbol} to obtain the recurrence
\begin{equation}\label{c:eq:Rrec}
 (m+d-u-1)R_{m,d}(u)=(m+d)R_{m-1,d}(u)
 +md\binom{m+d}{d}\frac{d-2-u}{(m+u)(m+u+1)},\qquad m\ge2.
\end{equation}
For $m\ge2$, we set
\[
 G_m=R_{m,d}(-a_d)-d\binom{m+d}{d}
 \left(\frac1{m+1}+\frac2{m(m-1)}
 \left(\frac1{d+1}-\frac1{m+1}\right)\right).
\]
Since
\[
 0<\frac{2}{m(m-1)}\le1,
\]
the parenthesized expression is a convex
combination of $1/(m+1)$ and $1/(d+1)$. Hence
\[
 G_m\le R_{m,d}(-a_d)-d\bar\lambda_{m,d}.
\]
By \eqref{c:eq:leftstar}, it suffices to prove
\[
 G_m\ge G_2>\rho_d.
\]
At $m=2$, we substitute $a_d=2/b_d$ and $d=b_d-2/b_d$ to obtain
\[
 G_2-\rho_d=
 \frac{(b_d^2-2)\bigl(3b_d^4(b_d-4)+7b_d^3+16b_d^2+6(b_d-2)\bigr)}
 {2b_d^2(b_d-3)(b_d+1)(3b_d-2)}>0.
\]
For $m\ge3$, we substitute the definition of $G_m$ into
\eqref{c:eq:Rrec} and use $a_db_d=2$ to obtain
\begin{equation}\label{l:eq:majorantrec}
\begin{split}
 &\frac{m(m+1)\bigl((m+b_d-1)G_m-(m+d)G_{m-1}\bigr)}
 {2d\binom{m+d}{d}}\\
 &\quad=
 \frac{(1-2a_d)m^2+a_d^2(m-1)+a_d}{(m-a_d)(m+1-a_d)}
 \\
 &\qquad+\frac{(m-3)\bigl((4-a_d)(m+3)+(d-6)(d+1)+a_d(d+2)\bigr)}
 {(d+1)(m-2)(m-1)}\\
 &\qquad+\frac{(d-9)(d-2)+a_d(d-3)+2}{(d+1)(m-2)(m-1)}>0.
\end{split}
\end{equation}
The three terms on the right are nonnegative, and the first is positive,
since $d\ge9$, $m\ge3$, and $0<a_d<2/d<1/2$.
Finally, we use
\[
 \frac{m+d}{m+b_d-1}>1
\]
and \eqref{l:eq:majorantrec} to obtain, by induction on $m$,
$G_m>G_{m-1}>0$ for every $m\ge3$. Thus $G_m\ge G_2>\rho_d$.
\end{proof}

\subsection{A lower bound for \texorpdfstring{$S_{m,d}(z)$}{S(z)}}\label{sec:kernel}
The following lower bound verifies condition~(\ref{c:cond:kernel}) of
Proposition~\ref{c:th:criterion}. Its value at $z=1$ also supplies the
estimate needed in the recurrence argument for $E_{m,d}>0$ in
Subsection~\ref{sec:positivity}.

\begin{lemma}\label{g:lem:kernel}
For integers $d\ge10$ and $m\ge2$, we have
\begin{equation*}
 S_{m,d}(z)>
 \frac{d+3}{b_d+1}
 \frac{3(d-5)(d-3)(m-1)}{d(d+3+(m-2)z)^4},
 \qquad 0\le z\le1.
\end{equation*}
\end{lemma}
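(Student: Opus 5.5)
The plan is to turn the alternating sum defining $S_{m,d}(z)$ into an integral whose integrand is visibly nonnegative, and then bound that integral from below by an elementary Beta-type estimate.

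\emph{Simplifying the coefficients.} Start from \eqref{c:eq:rightinteger}. Since $u(d-u)+2=(u+a_d)(b_d-u)$, substituting $u=-j$ gives the factorization
\[
 j(d+j)-2=(j-a_d)(j+b_d).
\]
The factor $(j-a_d)$ in the definition of $S_{m,d}(z)$ therefore cancels the denominator $j(d+j)-2$ in $c_{m,d}^{\mathrm R}(j)$. The normalization $md\binom{m+d}{d}$ also cancels most of the factorials. After these cancellations, $S_{m,d}(z)$ becomes a sum of the form
\[
 \frac{1}{2md\binom{m+d}{d}}\sum_{j=1}^{m-1}(-1)^{j+1}\binom{m-1}{j}\,\frac{\pi(j)}{j+b_d}\,\frac{(d+j)!\,(j+1)!}{(d+2j+2)!}\,\cdots\,z^{j-1},
\]
where $\pi(j)$ is an explicit polynomial in $j$ built from $j(j+1)(j+2)(d+j+2)(d+1+2j)$, and the omitted factors are explicit factorial ratios.

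\emph{Integral representation.} Next, undo the two evaluations that produced these factors:
\begin{itemize}
\item write the Beta factor back as $\int_0^1 t^{d+j}(1-t)^{j+1}\,dt$, as in the proof of Lemma~\ref{c:lem:decomposition};
\item write $1/(j+b_d)=\int_0^1 y^{j+b_d-1}\,dy$.
\end{itemize}
Summing over $j$ turns the alternating binomial sum into derivatives of $(1-zs)^{m-1}$ with $s=yt(1-t)\in[0,1/4]$. The polynomial factor $\pi(j)$ is to be absorbed as follows. Expand it in the falling-factorial basis $j^{\underline{k}}$ (possibly shifted by the factor $z^{j-1}$). Each basis element corresponds to applying $(z\,d/dz)$-type derivatives, and it produces a term of the form
\[
 (m-1)^{\underline{k}}\,(zs)^{k-1}s\,(1-zs)^{m-1-k}
\]
times a sign. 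The sign of the alternating series combines with $(-1)^k$ from differentiation. I expect every surviving term to be nonnegative for $0\le z\le1$, because $1-zs\ge 3/4$. This would give
\[
 S_{m,d}(z)=\int_0^1\!\!\int_0^1 K(t,y)\,\Phi_m(z\,y\,t(1-t))\,dt\,dy,
\]
with $K\ge0$ and $\Phi_m$ a nonnegative combination of the terms above.

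\emph{Lower bound.} Keep only the dominant term, the one with a factor $(m-1)$ and power $(1-zs)^{m-2}$, and discard the rest as nonnegative. Bound $(1-zs)^{m-2}$ from below near $t=1$, where the weight $t^{d+1}$ concentrates, with $1-t=\varepsilon$. Use either $(1-x)^k\ge(1+x)^{-k}$ or $1-zy\varepsilon\ge(1+zy\varepsilon)^{-1}$. Then integrate against $t^{d+1}(1-t)^2$ using $\int_0^\infty \varepsilon^2 e^{-(d+3)\varepsilon}\cdots$, or directly $\int_0^1 t^{d+1}(1-t)^2(1+z\varepsilon)^{-(m-2)}\,dt$. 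The result is a quantity of order $(d+3+(m-2)z)^{-3}$. The integration in $y$ against $y^{b_d}$ contributes the factor $1/(b_d+1)$ and one further power, which should give the fourth power in the denominator. Finally, carry the constants $(d+3)$, $3(d-5)(d-3)$, and $d$ through from $\pi(j)$, where $d\ge10$ is used to make the relevant polynomial prefactors positive.

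\emph{Main obstacle.} The hard part is the second step: choosing the basis expansion of $\pi(j)$ so that every term in $\Phi_m$ is nonnegative on $[0,1]$ with $s\le1/4$. The factor $(d+1+2j)(d+j+2)$ mixes the degrees in $j$. If a purely termwise argument fails, I would first try to absorb the cross terms by grouping pairs of consecutive derivative orders. Each such pair contains a factor such as $(d-5)-c\,zs(m-k)$, which would have to be checked nonnegative using $s\le1/4$, $z\le1$, and $d\ge10$. That check is exactly where the hypothesis $d\ge10$ should enter.
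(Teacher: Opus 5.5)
\textbf{There is a genuine gap: the integrand you need to be nonnegative is not, and the term you keep has the wrong order of magnitude.}

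Your first step is correct and matches the paper's starting point. With $\pi(j)=j(j+2)(d+j+2)(d+1+2j)$ one has
\[
 S_{m,d}(z)=\frac1{2d}\sum_{j=1}^{m-1}(-1)^{j+1}\binom{m-1}{j}
 \frac{(d+j)!\,(j+1)!}{(d+2j+2)!}\,\frac{\pi(j)}{j+b_d}\,z^{j-1}.
\]
Your two integral representations then give $S_{m,d}(z)=\frac1{2d}\int_0^1\!\int_0^1t^d(1-t)y^{b_d-1}z^{-1}\Phi_m(zs)\,dt\,dy$, where $s=yt(1-t)$ and $\Phi_m(w)=\sum_j(-1)^{j+1}\binom{m-1}{j}\pi(j)w^j$.

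The gap is your expectation that $\Phi_m\ge0$ on $[0,1/4]$. This is a statement about one fixed function, so no choice of basis can help, and it is false. Since $\pi$ has nonnegative coefficients, its falling-factorial coefficients $\pi_k$ are nonnegative, with $\pi_0=0$, $\pi_1=3(d+3)^2$ and $\pi_4=2$. Summing over $j$ gives
\[
 \Phi_m(w)=\sum_{k=1}^{4}(-1)^{k+1}\pi_k\,(m-1)^{\underline{k}}\,w^k(1-w)^{m-1-k}.
\]
The signs alternate, and the bound $1-zs\ge3/4$ does not affect them. At fixed $w\in(0,1/4]$ the negative $k=4$ term dominates as $m\to\infty$, so $\Phi_m(w)<0$ for large $m$. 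For the same reason, a pairing factor such as $(d-5)-c\,zs(m-k)$ cannot be nonnegative uniformly in $m$ when $d$ is fixed.

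Your lower-bound step also fails, by a full power of $m$. Fix $d$ and let $m\to\infty$. The $k=1$ term you keep is of order $m^{-2}$ at $z=1$. The weight $y^{b_d}$ concentrates near $y=1$, so the $y$-integration contributes only a factor of order $1/b_d$, not an extra power of $(d+3+(m-2)z)^{-1}$. By contrast, \eqref{v8:eq:moment} gives $\zeta_{d,k}=O_d(k^{-4})$, and then \eqref{g:eq:Psibinomial} and \eqref{g:eq:Sop} give $S_{m,d}(1)=O_d(m^{-3})$. So the discarded terms are net negative and cancel the leading order. The fourth power and the factor $(d-5)(d-3)$ come from this cancellation, not from $\pi(j)$.

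The missing idea is a way to perform the cancellation before asking for positivity. The paper does this as follows.
\begin{enumerate}
\item It splits $\frac{d+j+2}{j+b_d}=1+\frac{2-a_d}{j+b_d}$ to obtain \eqref{g:eq:Sop}. A lower bound for $\Psi$ that decreases in $z$ then suffices.
\item It rewrites $1-zt(1-t)=(1-\frac z4)+\frac z4(2t-1)^2$. This makes $-I_{m,d}'(z)/(m-1)$ a Bernstein polynomial in $z/4$ with positive coefficients $v_k$. On these coefficients $\alpha+\vartheta_z$ acts by $v_k\mapsto(k+\alpha)v_k-kv_{k-1}$.
\item It integrates by parts in $t$ to move these differences onto the weight. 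This expresses $\zeta_{d,k}$ as a moment of $t^{k-5/2}(1-t)^3\Omega_d(t)$.
\item It proves $\Omega_d(t)>\Omega_d(1)t^{(d+3)/4}$ on $(0,1)$ via the recurrence \eqref{v163:eq:Omega-rec}, where $\Omega_d(1)=(d-5)(d-3)2^{d-3}$. This step is itself nontrivial, because $\Omega_d$ has negative coefficients.
\item It applies Jensen's inequality to the binomial weights.
\end{enumerate}
Positivity is thus established for Bernstein coefficients after integration by parts, never pointwise in your variable $zs$. In the paper, $d\ge10$ is needed only for the bound on $\zeta_{d,1}$.
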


We first express $S_{m,d}(z)$ in terms of an auxiliary integral.
For integers $d\ge5$ and $m\ge1$, define
\[
 I_{m,d}(z)=\int_0^1t^d(1-t)(1-zt(1-t))^{m-1}\,dt.
\]
By expanding $I_{m,d}(z)$ and evaluating the beta integrals, we obtain
\begin{equation}\label{g:eq:rightcoeff}
 I_{m,d}(z)=\sum_{j=0}^{m-1}(-1)^j\binom{m-1}{j}
 \frac{(d+j)!(j+1)!}{(d+2j+2)!}z^j.
\end{equation}

For $m\ge2$, we use $\vartheta_z=z\,d/dz$ with the same coefficientwise convention
as for $\varthetax$ and set
\[
 \Psi(z)=\left(\frac{d+3}{2}+\vartheta_z\right)
 (3+\vartheta_z)(-I_{m,d}'(z)).
\]
Using \eqref{g:eq:rightcoeff} and~\eqref{c:eq:rightinteger}, together with
the factorization $j(d+j)-2=(j-a_d)(j+b_d)$, we obtain
\begin{equation}\label{g:eq:Sop}
 S_{m,d}(z)=\frac1d\left(\Psi(z)
 +(2-a_d)\int_0^1t^{b_d}\Psi(tz)\,dt\right).
\end{equation}

We use the following expansion and coefficient bound.
\begin{lemma}\label{g:lem:coefflower}
For each integer $d\ge10$, there are real coefficients $\zeta_{d,k}$,
$k\ge0$, such that for every integer $m\ge2$,
\begin{equation}\label{g:eq:Psibinomial}
 \Psi(z)=(m-1)\sum_{k=0}^{m-2}\binom{m-2}{k}
 \left(\frac z4\right)^k\left(1-\frac z4\right)^{m-2-k}\zeta_{d,k}.
\end{equation}
Moreover,
\[
 \zeta_{d,k}>\frac{3(d-5)(d-3)}{(4k+d+3)^4},\qquad k\ge0.
\]
\end{lemma}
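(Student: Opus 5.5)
The proof has two parts: an integral representation of $\Psi$ that yields \eqref{g:eq:Psibinomial} with $m$-independent coefficients $\zeta_{d,k}$, and then a lower bound for these coefficients as moments of a fixed kernel.

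\emph{Expansion.} Differentiating under the integral gives
\[
 -I_{m,d}'(z)=(m-1)\int_0^1 t^{d+1}(1-t)^2\,(1-zt(1-t))^{m-2}\,dt .
\]
With $w=t(1-t)$ we have $\vartheta_z\,(1-zw)^{m-2}=w\,\partial_w(1-zw)^{m-2}$. I will symmetrize the weight under $t\mapsto1-t$ and change variables to $y=(1-2t)^2\in[0,1]$, so that $w=(1-y)/4$. This turns $w\partial_w$ into the operator $(y-1)\partial_y$ acting on $(1-z(1-y)/4)^{m-2}$. Integrating by parts twice moves both factors $\bigl(\tfrac{d+3}{2}+\vartheta_z\bigr)(3+\vartheta_z)$ onto the weight. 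The boundary terms vanish because the weight vanishes to high order at $y=1$ (where $t=0$) and, once we pass to the variable $y$, is tame at $y=0$. The result has the form
\[
 \Psi(z)=(m-1)\int_0^1 H_d(y)\,\Bigl(1-\tfrac z4+\tfrac z4\,y\Bigr)^{m-2}\,dy ,
\]
where the kernel $H_d$ depends only on $d$. Expanding the power binomially in the two summands $1-z/4$ and $(z/4)y$ gives \eqref{g:eq:Psibinomial} with
\[
 \zeta_{d,k}=\int_0^1 H_d(y)\,y^k\,dy .
\]
Equivalently, back in $t$, this is $\int_0^1 K_d(t)(1-2t)^{2k}\,dt$, where $K_d$ is obtained by applying a second-order differential operator to $t^{d+1}(1-t)^2$. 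As a consistency check, at $k=0$ this must reproduce the constant term of $\Psi$ divided by $m-1$, namely $\tfrac{3(d+3)}{2}\,[z^1](-I_{m,d})/(m-1)$, which follows from \eqref{g:eq:rightcoeff}.

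\emph{Closed form.} Next I will compute $\zeta_{d,k}$ explicitly. Each term reduces to integrals $\int_0^1 t^a(1-t)^b(1-2t)^{2k}\,dt$ with small shifts of $a$ and $b$ around $d+1$ and $2$, multiplied by polynomials in $d$ and $k$ coming from the operator. These integrals satisfy simple three-term recurrences in $k$; alternatively, substituting $y$ expresses them through Beta values $B(k+\tfrac12,\cdot)$ after expanding the short factor $(1-t)^2$. I expect the result to be $\zeta_{d,k}$ written as a rational function of $k$ and $d$ times a Beta-type product such as $\frac{(2k)!\,\cdots}{(2k+d+\cdots)!}$.

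\emph{Lower bound.} The heuristic is as follows. The large factor $t^{d+1}$ concentrates mass near $t=1$, i.e.\ near $y=1$, where $1-t\approx(1-\sqrt y)/2$. The factor $(1-t)^2$, combined with the two Euler-type factors, produces a kernel behaving like $(1-y)^{3}$ times a function that is smooth on the relevant scale. Its moments therefore decay like $k^{-4}$, with effective scale $4k+d+3$. To make this rigorous, I will prove the ratio inequality
\[
 \frac{\zeta_{d,k+1}}{\zeta_{d,k}}\ \ge\ \Bigl(\frac{4k+d+3}{4k+d+7}\Bigr)^{4}
\]
from the closed form, reducing it to the nonnegativity of a polynomial in $k$ and $d$ with $d\ge10$. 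I will then check the base case directly:
\[
 \zeta_{d,0}>\frac{3(d-5)(d-3)}{(d+3)^4}.
\]
Induction on $k$ then gives the stated bound, since the right-hand side of that bound has exactly this ratio. If the ratio inequality fails for a few small $k$, I would treat those values by direct evaluation.

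\emph{Main obstacle.} I expect the hardest step to be obtaining a manageable closed form for $\zeta_{d,k}$ and verifying the resulting polynomial inequality. The kernel $H_d$ need not be positive everywhere, so a crude positivity argument will not suffice, and the constant $3(d-5)(d-3)$ suggests the bound is nearly sharp at $k=0$.
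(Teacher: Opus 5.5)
Your plan has a genuine gap in the lower-bound step. Write $\beta_{d,k}=3(d-5)(d-3)/(4k+d+3)^4$ for the target bound. Your ratio inequality is equivalent to $\zeta_{d,k}/\beta_{d,k}$ being nondecreasing in $k$, but the bound is sharp to leading order. For $k\ge2$ the paper writes $\zeta_{d,k}=2^{-d-6}\int_0^1t^{k-5/2}(1-t)^3\Omega_d(t)\,dt$, with $\Omega_d(t)=\frac1d\sum_j(2j-3)(2j-1)(d-2j)\binom{d+1}{2j}t^j$ and $\Omega_d(1)=(d-5)(d-3)2^{d-3}$. Hence $\zeta_{d,k}\sim 3(d-5)(d-3)/(256k^4)\sim\beta_{d,k}$, and more precisely $\zeta_{d,k}/\beta_{d,k}=1+\frac{d-7}{(d-3)k}+O(k^{-2})$. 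A quotient that stays above $1$ and tends to $1$ cannot be nondecreasing from any index on. So your inequality fails for all large $k$, and evaluating finitely many cases by hand cannot repair this. It also fails at $k=0$: $\zeta_{d,1}/\zeta_{d,0}=(d^2-7d-14)/((d+3)(d+6))$ equals $1/13$ at $d=10$, whereas $(13/17)^4>1/3$.

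You should not expect a single-term closed form either. Already $v_k=\int_0^1t^{d+1}(1-t)^2(2t-1)^{2k}\,dt$ equals $2^{-d}\sum_h\binom{d-1}{2h}\big/\bigl((2k+2h+1)(2k+2h+3)(2k+2h+5)\bigr)$, a sum of about $d/2$ rational functions of $k$. The missing idea is that the kernel \emph{is} positive, and the bound comes from a pointwise comparison, not a recursion in $k$. The paper proves $4(d-3)t\Omega_d'(t)=(d-2)(d+1)\bigl(\Omega_d(t)-(1-t)\Omega_{d-2}(t)\bigr)$. From this it deduces $\Omega_d(t)>\Omega_d(1)t^{(d+3)/4}$ on $(0,1)$ by induction on $d$ in steps of two. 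A beta integral then gives $3(d-5)(d-3)/\bigl(256(k+(d-3)/4)_4\bigr)$, and AM--GM finishes. Your remark that $H_d$ need not be positive points away from this mechanism.

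The expansion step also contains an error. The endpoint $y=0$ corresponds to $t=1/2$, where the symmetrized weight equals $2^{-d-2}\ne0$. Since $dt=dy/(4\sqrt y)$, your weight $W$ in the variable $y$ behaves like $y^{-1/2}$ there. The boundary term $(y-1)W(y)\bigl(1-\tfrac z4+\tfrac z4y\bigr)^{m-2}$ therefore diverges at $y=0$ instead of vanishing. After the two integrations by parts, $H_d(y)\asymp y^{-5/2}$, so $\int_0^1H_d(y)y^k\,dy$ diverges for $k=0,1$, and your integral formula for $\Psi(z)$ is not valid. Your own consistency check would fail: $\int_0^1H_d=+\infty$, while $\zeta_{d,0}=3/((d+2)(d+4))$.

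The $m$-independent expansion is elementary. Write $1-zt(1-t)=(1-z/4)+(z/4)(2t-1)^2$, expand binomially, and note that $\alpha+\vartheta_z$ acts on the coefficients by $v_k\mapsto(k+\alpha)v_k-kv_{k-1}$. Then integrate by parts coefficientwise, which is legitimate only for $k\ge2$, and compute $\zeta_{d,0}$ and $\zeta_{d,1}$ directly, as the paper does.
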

\begin{proof}
We differentiate $I_{m,d}(z)$ and apply the binomial theorem to obtain
\[
 -\frac{I_{m,d}'(z)}{m-1}
 =\sum_{k=0}^{m-2}\binom{m-2}{k}
 \left(\frac z4\right)^k\left(1-\frac z4\right)^{m-2-k}v_k,
\]
where, for every integer $k\ge0$, we set
\[
\begin{aligned}
 v_k&=\int_0^1t^{d+1}(1-t)^2(2t-1)^{2k}\,dt\\
 &=2^{-d-3}\int_0^1s^{2k}(1-s^2)^2
 \sum_{h=0}^{n}\binom{d-1}{2h}s^{2h}\,ds.
\end{aligned}
\]
We obtain the second equality by pairing $t$ with $1-t$ and setting $s=2t-1$.
For real $\alpha$, we apply $\alpha+\vartheta_z$ to replace $v_k$ by
$(k+\alpha)v_k-kv_{k-1}$ and omit the second term at $k=0$.
We then apply the two operators that define $\Psi(z)$ to obtain
\eqref{g:eq:Psibinomial}, with coefficients independent of $m$.
We evaluate the beta integrals to obtain
\[
 \zeta_{d,0}=\frac{3}{(d+2)(d+4)},\qquad
 \zeta_{d,1}=\zeta_{d,0}\frac{d^2-7d-14}{(d+3)(d+6)}.
\]
For $k=0$, we use
\[
 \zeta_{d,0}>\frac{3}{(d+3)^2}
 >\frac{3(d-5)(d-3)}{(d+3)^4}.
\] For $k=1$, we combine the identity
\[
\begin{aligned}
 &(d^2-7d-14)(d+9)^2-(d-5)(d-3)(d+2)(d+3)\\
 &\qquad=2(d-10)(7d^2+50d+77)+316>0
\end{aligned}
\]
with the arithmetic--geometric mean inequality to obtain
\[
 \zeta_{d,1}>
 \frac{3(d-5)(d-3)}{(d+4)(d+6)(d+9)^2}
 >\frac{3(d-5)(d-3)}{(d+7)^4}.
\]

For $k\ge2$ and $0\le h\le n$, we substitute $t=s^2$ and start with
$w(t)=t^{h-1/2}(1-t)^2$. With $\alpha=3$ and then $\alpha=(d+3)/2$,
we integrate by parts using
\[
 \int_0^1\bigl((k+\alpha)t^k-kt^{k-1}\bigr)w(t)\,dt
 =\int_0^1t^k\left(\alpha w(t)+((1-t)w(t))'\right)dt.
\]
We apply both transformations to the weight to obtain
\[
 \frac{2h-1}{4}t^{h-5/2}(1-t)^3
 \bigl((2h-3)+(d-2h-2)t\bigr).
\]
All boundary terms vanish for $k\ge2$. At zero, their smallest
exponent is $k+h-3/2\ge1/2$. At one, they contain $(1-t)^3$ or
$(1-t)^4$. The final integrand is at worst of order $t^{-1/2}$
near zero, so the improper integrals converge. Set
\[
 \Omega_d(t)=\frac1d\sum_{j=0}^{\lfloor(d+1)/2\rfloor}
 (2j-3)(2j-1)(d-2j)\binom{d+1}{2j}t^j.
\]
The same formula defines $\Omega_d(t)$ for every integer $d\ge5$.
After summing over $h$ and collecting equal powers of $t$, we obtain
\begin{equation}\label{v8:eq:moment}
 \zeta_{d,k}=2^{-d-6}\int_0^1t^{k-5/2}(1-t)^3\Omega_d(t)\,dt,
 \qquad k\ge2.
\end{equation}
Using $\varthetax x^k=kx^k$, we evaluate $\Omega_d(1)$ as
\[
\begin{aligned}
 \Omega_d(1)
 &=\left.\frac{(\varthetax-3)(\varthetax-1)(d-\varthetax)}{2d}
 \bigl((1+x)^{d+1}+(1-x)^{d+1}\bigr)\right|_{x=1}\\
 &=(d-5)(d-3)2^{d-3}.
\end{aligned}
\]
For $1\le j\le\lfloor(d-1)/2\rfloor$, we use
\[
 \frac{\binom{d-1}{2j}}{\binom{d+1}{2j}}
 =\frac{(d+1-2j)(d-2j)}{d(d+1)},\qquad
 \frac{\binom{d-1}{2j-2}}{\binom{d+1}{2j}}
 =\frac{2j(2j-1)}{d(d+1)}
\]
to compare the coefficients of $t^j$ in the following recurrence. We check
the coefficients for $j=0$ and $j=\lfloor(d+1)/2\rfloor$ directly and obtain
\begin{equation}\label{v163:eq:Omega-rec}
 4(d-3)t\Omega_d'(t)
 =(d-2)(d+1)\bigl(\Omega_d(t)-(1-t)\Omega_{d-2}(t)\bigr),
 \qquad d\ge7.
\end{equation}
For $d\ge7$, put
\[
 \kappa_d=\frac{(d-2)(d+1)}{4(d-3)}\le\frac{d+3}4.
\]
Using \eqref{v163:eq:Omega-rec}, we obtain
\[
 \frac{d}{dt}\bigl(t^{-\kappa_d}\Omega_d(t)\bigr)
 =-\kappa_d t^{-\kappa_d-1}(1-t)\Omega_{d-2}(t).
\]
The initial polynomials $\Omega_5(t)=3(1-t)^3$ and
$\Omega_6(t)=35(t-1/5)^2+8/5$ are positive on $(0,1)$.
If $\Omega_{d-2}(t)>0$ on this interval, then the preceding derivative
is negative. Since $\Omega_d(1)>0$ for $d\ge7$, we proceed by induction
separately for even and odd $d$ to obtain
\[
 \Omega_d(t)>\Omega_d(1)t^{\kappa_d}
 \ge\Omega_d(1)t^{(d+3)/4}>0,\qquad 0<t<1.
\]
After substituting this bound into \eqref{v8:eq:moment}, we evaluate
the beta integral and apply the arithmetic--geometric mean inequality
to obtain
\[
 \zeta_{d,k}>
 \frac{3(d-5)(d-3)}{256(k+(d-3)/4)_4}
 >\frac{3(d-5)(d-3)}{(4k+d+3)^4},\qquad k\ge2.
\]
Combining this bound with the cases $k=0,1$, we obtain the asserted inequality.
\end{proof}

\begin{proof}[Proof of Lemma~\ref{g:lem:kernel}]
We interpret the weights in \eqref{g:eq:Psibinomial} as the probabilities of a
binomial distribution with parameters $m-2$ and $z/4$, whose mean is
$(m-2)z/4$. Using Lemma~\ref{g:lem:coefflower} and Jensen's inequality
for the convex function $u\mapsto(4u+d+3)^{-4}$ on $[0,\infty)$,
we obtain
\[
 \Psi(z)>
 \frac{3(d-5)(d-3)(m-1)}{(d+3+(m-2)z)^4}.
\]
The inequality is also strict when $m=2$ or $z=0$, since the bounds
on the coefficients are strict.
Since $tz\le z$ for $t,z\in[0,1]$, we substitute this bound into
\eqref{g:eq:Sop} and use
\[
 1+\frac{2-a_d}{b_d+1}=\frac{d+3}{b_d+1}
\]
to conclude the proof.
\end{proof}

\subsection{Positivity of \texorpdfstring{$E_{m,d}$}{E}}\label{sec:positivity}
The following proposition verifies condition~(\ref{c:cond:gap}) of
Proposition~\ref{c:th:criterion}.

\begin{proposition}\label{g:th:gap}
For integers $d\ge17$ and $m\ge2$, we have $E_{m,d}>0$.
\end{proposition}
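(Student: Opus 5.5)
We first write $E_{m,d}$ explicitly. From the definition of $g(t)$,
\[
 E_{m,d}=-\frac1{2md\binom{m+d}{d}}\sum_{p\in\{a_d,1,\ldots,m-1\}}
 c_{m,d}^{\mathrm R}(p)(d+1+2p),
\]
where the terms with $p=j\ge1$ come from \eqref{c:eq:rightinteger} and the term with $p=a_d$ comes from \eqref{c:eq:rightstar}. The first step is a recurrence in the corank, in the spirit of the proof of Proposition~\ref{l:prop:smallpole}. We apply \eqref{c:eq:Rrec} at $u=b_d$, which controls $R_{m,d}(b_d)$ and hence $c_{m,d}^{\mathrm R}(a_d)$. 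For the integer poles we use the factor $1/(m-1-j)!$ in \eqref{c:eq:rightinteger}, which gives an analogous linear relation between the coefficients at coranks $m$ and $m-1$. The outcome should be an identity of the form
\[
 (\text{positive factor})\,E_{m,d}=(\text{positive factor})\,E_{m-1,d}+D_{m,d},
\]
where the defect $D_{m,d}$ involves $\bar\lambda_{m,d}$ and $\bar\lambda_{m-1,d}$. It should also involve the sum $\sum_j c^{\mathrm R}_{m,d}(j)(d+1+2j)(j-a_d)$, which is, up to normalization, $S_{m,d}(1)$. The factor $(j-a_d)$ appears because $E$ and $S$ differ by a factor of $p-a_d$ in the exponent weights.

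The second step is to bound the defect from below by Lemma~\ref{g:lem:kernel} at $z=1$:
\[
 S_{m,d}(1)>\frac{d+3}{b_d+1}\,\frac{3(d-5)(d-3)(m-1)}{d(d+m+1)^4}.
\]
This positive lower bound should dominate the remaining terms of $D_{m,d}$. Those terms come from the jump in $\bar\lambda_{m,d}$ between the regimes $m\le d$ and $m>d$, and from the $\eta_d$ and $\phi_d$ constants. The dominance should hold once $m$ exceeds a small threshold $m_0$, say $m\ge 4$. Since $\bar\lambda_{m,d}$ has denominator $\max(m,d)+1$, we split into $m<d$ and $m\ge d$, which gives two versions of the recurrence with separate elementary estimates. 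The condition $d\ge17$ should enter here, making the lower-order terms in $d$ negligible.

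The third step handles the small coranks $2\le m<m_0$ by a direct estimate. For such $m$, $E_{m,d}$ is an explicit rational function of $d$ and $a_d$. We substitute $d=b_d-2/b_d$ as in the proof of Proposition~\ref{l:prop:smallpole}, clear denominators, and exhibit the numerator as a polynomial in $b_d$ with visibly positive coefficients after shifting by $17$. Induction on $m$ then gives $E_{m,d}>0$ for all $m\ge2$.

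The main obstacle is the second step. The recurrence defect contains the alternating-sign coefficients at the positive integer poles. The only usable control over them is the integrated positivity from Lemma~\ref{g:lem:kernel}, and its bound decays like $(d+m)^{-4}$. The competing terms from $\bar\lambda_{m,d}$ must therefore be shown to decay at least as fast relative to the normalizing factor $md\binom{m+d}{d}$. If the decay does not match, the first fallback is to choose a modified majorant $E_m-\text{(explicit correction)}$, as was done with $G_m$, so that the correction absorbs the slowly decaying part.
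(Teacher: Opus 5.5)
Your plan has a sign error that breaks the induction. If you carry out your first step explicitly, the sign of $S_{m,d}(1)$ is the opposite of the one your second step needs. Apply \eqref{c:eq:Rrec} at $u=b_d$, and use $(m+d)c_{m-1,d}^{\mathrm R}(j)=(m-1-j)c_{m,d}^{\mathrm R}(j)$ from \eqref{c:eq:rightinteger} together with $\binom{m+d-1}{d}=\frac{m}{m+d}\binom{m+d}{d}$. In $(m-1-a_d)E_{m,d}-(m-1)E_{m-1,d}$, the integer-pole terms then carry the factor $(m-1-a_d)-(m-1-j)=j-a_d$. They also carry the prefactor $-1/(2md\binom{m+d}{d})$ from the definition of $E_{m,d}$, so they sum to $-S_{m,d}(1)$:
\[
 (m-1-a_d)E_{m,d}=(m-1)E_{m-1,d}-S_{m,d}(1)+B_{m,d}.
\]
Here $B_{m,d}$, given by \eqref{g:eq:B} and \eqref{g:eq:Blarge}, collects the $\phi_d$, $\bar\lambda$ and $\eta_d$ contributions. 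So your defect is $D_{m,d}=B_{m,d}-S_{m,d}(1)$, and Lemma~\ref{g:lem:kernel} bounds it from above, not from below. You are right that $S_{m,d}(1)$ dominates for large $m$: its lower bound is of order $d(m-1)/(m+d)^4$, while $B_{m,d}$ is at most of order $1/(md^2)$ for $m\le d$. Indeed the paper proves $S_{m,d}(1)>B_{m,d}$ for $m\ge\lceil(d+1)/3\rceil$. But this makes $D_{m,d}<0$ there, so $E_{m-1,d}>0$ tells you nothing about $E_{m,d}$. Your forward induction therefore fails exactly in the range it was meant to cover.

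The missing idea is to read the recurrence downward, $(m-1)E_{m-1,d}=(m-1-a_d)E_{m,d}+\bigl(S_{m,d}(1)-B_{m,d}\bigr)$, and propagate positivity down from infinity. Suppose $S_{m,d}(1)>B_{m,d}$ for all $m>k$. Iterating from $L$ down to $k$ gives $E_{k,d}\ge\prod_{j=k}^{L-1}(1-a_d/j)\,E_{L,d}+(S_{k+1,d}(1)-B_{k+1,d})/k$. The product lies in $(0,1]$, so you also need an a priori bound $E_{L,d}\ge-C/L$ to let $L\to\infty$. The paper gets this bound from \eqref{g:eq:gap} by rewriting the positive-pole contributions through $I_{m,d}$ and using $0\le-I_{m,d}'(1)\le2/(md)$; this is Lemma~\ref{g:lem:tail}.

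Moreover, no fixed threshold such as $m_0=4$ can work for all $d\ge17$. For $m=2$, $B_{2,d}$ is of order $d^{-2}$ while $S_{2,d}(1)=3(d+3)(1-a_d)/\bigl(d(d+2)(d+4)(d-1)\bigr)$ is of order $d^{-3}$, and the same holds for any fixed $m$. So the comparison $S_{m,d}(1)>B_{m,d}$ can only begin at a corank that grows with $d$. Your third step, a symbolic check at finitely many fixed coranks, is therefore not enough. You need an estimate uniform on a growing range. The paper takes $2\le m\le(d-2)/3$ and uses three ingredients there: the inductive bound $R_{m,d}(b_d)/(md\binom{m+d}{d})>1/(d+2)$, the $j=1$ term as a lower bound for the alternating sum, and concavity of the resulting bound in $m$.
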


We use a direct estimate for small coranks. For larger coranks,
we work backwards through a recurrence for $E_{m,d}$.

Differentiating $g(t)$ at zero and using \eqref{c:eq:rightstar}, we obtain
the following expression for $E_{m,d}$. We also use it to define $E_{1,d}$.
\begin{equation}\label{g:eq:gap}
 E_{m,d}
 =\frac{1}{md\binom{m+d}{d}}\left(
 \phi_d\bigl(R_{m,d}(b_d)-d\bar\lambda_{m,d}\bigr)
 -\frac{\eta_d}{b_d+1}
 -\frac12\sum_{j=1}^{m-1}c_{m,d}^{\mathrm R}(j)(d+1+2j)\right).
\end{equation}
For $2\le m\le d$, set
\begin{equation}\label{g:eq:B}
 B_{m,d}=\phi_d\left(\frac{a_d+1}{m(d+1)}
 -\frac{a_d+2}{(m+b_d)(m+b_d+1)}\right)
 +\frac{\eta_d}{md\binom{m+d}{d}}.
\end{equation}
For $m\ge d+1$, set
\begin{equation}\label{g:eq:Blarge}
 B_{m,d}=\phi_d\left(\frac{a_d+2}{m(m+1)}
 -\frac{a_d+2}{(m+b_d)(m+b_d+1)}\right)
 +\frac{\eta_d}{md\binom{m+d}{d}}.
\end{equation}
The next lemma reduces positivity for large coranks to a comparison
between $S_{m,d}(1)$ and $B_{m,d}$.

\begin{lemma}\label{g:lem:tail}
Fix an integer $d\ge5$ and an integer $m_0\ge2$.
If $S_{m,d}(1)>B_{m,d}$ for every $m\ge m_0$, then
$E_{m,d}>0$ for every $m\ge m_0-1$.
\end{lemma}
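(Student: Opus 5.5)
The plan is to derive a first-order recurrence in the corank for a suitably normalized $E_{m,d}$, in which $S_{m,d}(1)-B_{m,d}$ appears as the inhomogeneous term with a positive factor, and then to sum it backwards from $m=\infty$.

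\emph{Step 1: recurrence for the $R$-part.} I would start from \eqref{g:eq:gap}, which is also valid for $m=1$. For the part $\phi_d(R_{m,d}(b_d)-d\bar\lambda_{m,d})$, I would evaluate \eqref{c:eq:Rrec} at $u=b_d$. Since $b_d=d+a_d$, this gives
\[
 (m-a_d-1)R_{m,d}(b_d)=(m+d)R_{m-1,d}(b_d)-md\tbinom{m+d}{d}\frac{a_d+2}{(m+b_d)(m+b_d+1)}.
\]
After dividing by $md\binom{m+d}{d}$, the last term is exactly the second summand of $\phi_d(\cdots)$ in \eqref{g:eq:B} and \eqref{g:eq:Blarge}. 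The terms $\phi_d d\bar\lambda_{m,d}$ and $\eta_d/(b_d+1)$, compared at $m$ and $m-1$ with the same normalization, should produce the remaining pieces of $B_{m,d}$. The two cases $m\le d$ and $m\ge d+1$ correspond to $\max(m,d)+1$ in $\bar\lambda_{m,d}$. The term $(a_d+1)/(m(d+1))$ versus $(a_d+2)/(m(m+1))$ would then come from $\binom{m+d}{d}/(d+1)$ compared with $\binom{m-1+d}{d}/(d+1)$, or with $\binom{m-1+d}{d}/m$.

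\emph{Step 2: recurrence for the alternating sum.} For the sum $\frac12\sum_j c_{m,d}^{\mathrm R}(j)(d+1+2j)$, I would use \eqref{c:eq:rightinteger}, which gives
\[
 c_{m,d}^{\mathrm R}(j)=\frac{m+d}{m-1-j}\,c_{m-1,d}^{\mathrm R}(j).
\]
Writing $m-1-j=(m-1-a_d)-(j-a_d)$ shows that the combination $(m-1-a_d)\cdot[\text{$m$-sum}]-(m+d)\cdot[\text{$(m-1)$-sum}]$ equals $\sum_j c_{m,d}^{\mathrm R}(j)(d+1+2j)(j-a_d)$. This is $2md\binom{m+d}{d}S_{m,d}(1)$ by definition; the boundary term $j=m-1$ has to be handled but should vanish in the difference. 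Putting Steps 1 and 2 together, I expect an identity
\[
 (m-1-a_d)E_{m,d}-\frac{(m-1)}{m}\,\frac{\cdots}{\cdots}E_{m-1,d}=-\bigl(S_{m,d}(1)-B_{m,d}\bigr),
\]
up to explicit positive normalizing factors. Equivalently,
\[
 E_{m-1,d}=\alpha_mE_{m,d}+\beta_m\bigl(S_{m,d}(1)-B_{m,d}\bigr),\qquad \alpha_m,\beta_m>0 .
\]
Getting the constants to match \eqref{g:eq:B} and \eqref{g:eq:Blarge} exactly is routine algebra with $a_db_d=2$, $\phi_d$ and $\eta_d$.

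\emph{Step 3: backward iteration.} Iterating the recurrence gives, for $M>m\ge m_0-1$,
\[
 E_{m,d}=\Bigl(\prod_{k=m+1}^{M}\alpha_k\Bigr)E_{M,d}+\sum_{k=m+1}^{M}\Bigl(\prod_{i=m+1}^{k-1}\alpha_i\Bigr)\beta_k\bigl(S_{k,d}(1)-B_{k,d}\bigr).
\]
By hypothesis the sum is positive. The main obstacle is the boundary term: I need $\liminf_{M\to\infty}\bigl(\prod\alpha_k\bigr)E_{M,d}\ge0$.

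I would first try to show that the normalized quantity tends to zero. Each piece of $E_{M,d}$ is bounded polynomially in $M$ after the normalization $1/(Md\binom{M+d}{d})$:
\begin{itemize}
\item $R_{M,d}(b_d)/\binom{M+d}{d}$ stays bounded, by \eqref{c:eq:Rsymbol};
\item $\bar\lambda_{M,d}/\binom{M+d}{d}=1/(M+1)$;
\item the alternating sum is a derivative-type combination of $I_{M,d}$. Via \eqref{g:eq:rightcoeff}, evaluating at $z=1$ gives integrals like $\int t^{d+1}(1-t)^2(1-t(1-t))^{M-2}\,dt$, which are bounded by $(3/4)^{M}$ times a constant.
\end{itemize}
Since $\prod\alpha_k$ grows at most polynomially (the $\alpha_k$ are ratios like $(k-1-a_d)/(k+d)$ times bounded factors), the boundary term should tend to $0$. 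Even if it only stays bounded, its sign should follow from the leading terms, which are positive for large $M$. This would give $E_{m,d}>0$ for all $m\ge m_0-1$. Alternatively, I would look for an explicit positive lower bound on $E_{M,d}$ for large $M$.
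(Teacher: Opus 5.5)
Your Steps 1 and 2 follow the paper's derivation. Carried out, they give exactly
\[
 (m-1-a_d)E_{m,d}=(m-1)E_{m-1,d}-S_{m,d}(1)+B_{m,d},
\]
so $\alpha_m=1-a_d/(m-1)\in(0,1)$ and $\beta_m=1/(m-1)$. In particular $\prod\alpha_k$ lies in $(0,1]$ and decays like $M^{-a_d}$.

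The genuine gap is Step 3, which is the real content of the lemma. Your estimates for the separate pieces of $E_{M,d}$ are false, and no piecewise estimate can work.

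Use the partial fraction expansion of $R_{M,d}(u)/(Md\binom{M+d}{d})$ from the paper. The poles $-1,\dots,-M$ contribute positively at $u=b_d$, and the pole $-M-1$ contributes more than $-1/M$. The poles $d+j$ have coefficients $-(j+2)[z^j]I_{M,d}(z)$. Writing $(j+2)/(j-a_d)=1+(2+a_d)/(j-a_d)$ gives
\[
 \frac{R_{M,d}(b_d)}{Md\binom{M+d}{d}}> -\frac1M-I_{M,d}(1)+(2+a_d)\Bigl(\frac{I_{M,d}(0)}{a_d}+\int_0^1z^{-a_d-1}\bigl(I_{M,d}(0)-I_{M,d}(z)\bigr)\,dz\Bigr).
\]
Here $I_{M,d}(0)=1/((d+1)(d+2))$ is fixed and $I_{M,d}(z)\downarrow0$ for every $z\in(0,1]$. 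Monotone convergence therefore sends the integral to $+\infty$, at rate $M^{a_d}$. So $R_{M,d}(b_d)/\binom{M+d}{d}$ is unbounded, contrary to your first bullet.

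Likewise, the alternating sum is not a combination of derivatives of $I_{M,d}$ at $z=1$. Its $j$th term is $[z^j]I_{M,d}(z)$ times $(j+2)j(d+j+2)(d+1+2j)/(2d(j-a_d)(j+b_d))$. The pole at $j=a_d$ produces the same divergent integral with the opposite sign, so that term of \eqref{g:eq:gap} tends to $-\infty$. Also, $\int_0^1t^{d+1}(1-t)^2(1-t(1-t))^{M-2}\,dt$ decays like $M^{-3}$, not like $(3/4)^M$, because $1-t(1-t)=1$ at both endpoints.

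Multiplying these pieces by $\prod\alpha_k\asymp M^{-a_d}$ leaves two terms of opposite signs that stay bounded away from zero. So the heuristics that the boundary term ``should tend to $0$'' or has the sign of its ``leading terms'' establish nothing.

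What is missing is the cancellation, used to obtain a one-sided bound $E_{M,d}\ge -C/M$. This suffices because $\prod_{j=k}^{L-1}(1-a_d/j)\in(0,1]$. Retaining only the $m=k+1$ term then gives
\[
 E_{k,d}\ge\prod_{j=k}^{L-1}(1-a_d/j)\,E_{L,d}+\frac{S_{k+1,d}(1)-B_{k+1,d}}{k},
\]
and you let $L\to\infty$.

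The paper merges $\phi_d$ times the positive-pole part of $R_{m,d}(b_d)$ with the alternating sum, term by term. The constant $\phi_d$ is precisely the one that cancels the residues at $j=a_d$, and the merged sum equals
\[
 \frac{I''_{m,d}(1)}d+\frac{d+11}{2d}I'_{m,d}(1)+\Bigl(\frac8d-\phi_d\Bigr)I_{m,d}(1)+\Bigl(1-\frac3d+\phi_d\Bigr)(b_d-2)\int_0^1z^{b_d-1}I_{m,d}(z)\,dz.
\]
Only the $I'$ term is negative, and $0\le-I'_{m,d}(1)\le 2/(md)$. Combined with the $O(1/m)$ bounds for the pole $-m-1$ and for the $\bar\lambda_{m,d}$ and $\eta_d$ terms, this gives \eqref{g:eq:Elower}.

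So what is needed is a lower bound tending to $0$ from below, not the positive lower bound for large $M$ that you suggest as an alternative.
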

\begin{proof}
We first derive a recurrence for $E_{m,d}$. We substitute $u=b_d=d+a_d$
into \eqref{c:eq:Rrec} to obtain
\[
 (m-1-a_d)R_{m,d}(b_d)=(m+d)R_{m-1,d}(b_d)
 -\frac{md(a_d+2)\binom{m+d}{d}}{(m+b_d)(m+b_d+1)}.
\]
We set $c_{m-1,d}^{\mathrm R}(m-1)=0$ and use
\eqref{c:eq:rightinteger} to obtain
\[
 (m+d)c_{m-1,d}^{\mathrm R}(j)
 =(m-1-j)c_{m,d}^{\mathrm R}(j),\qquad1\le j\le m-1.
\]
The terms involving $\bar\lambda_{m,d}$ satisfy
\[
 \frac1{\max(m-1,d)+1}
 -\frac{m-1-a_d}{m(\max(m,d)+1)}
 =
 \begin{cases}
 (a_d+1)/(m(d+1)),&m\le d,\\
 (a_d+2)/(m(m+1)),&m\ge d+1.
 \end{cases}
\]
The second branch also holds at $m=d+1$, since
$\max(m-1,d)+1=m$.
We divide the first two identities by $md\binom{m+d}{d}$ and
substitute them, together with the preceding identity,
into \eqref{g:eq:gap}. Using
\[
 \binom{m+d-1}{d}=\frac{m}{m+d}\binom{m+d}{d},
\]
we obtain
\begin{equation}\label{g:eq:backward}
 (m-1-a_d)E_{m,d}=(m-1)E_{m-1,d}-S_{m,d}(1)+B_{m,d},
 \qquad m\ge2.
\end{equation}

We next obtain a lower bound for $E_{m,d}$ by retaining only
the possibly negative terms in \eqref{g:eq:gap}.
We substitute $u=b_d$ into the partial fraction expansion of
\[
 \frac{R_{m,d}(u)}{md\binom{m+d}{d}}.
\]
By the residue formulas in the proof of
Lemma~\ref{c:lem:decomposition}, the poles $-1,\ldots,-m$ contribute
positive terms, whereas the pole $-m-1$ contributes
\[
 -\frac{d+m-1}{(d+2m)(b_d+m+1)}.
\]
For $0\le j\le m-1$, the coefficient of $(d+j-u)^{-1}$ equals
$-(j+2)$ times the coefficient of $z^j$ in $I_{m,d}(z)$.
In \eqref{g:eq:gap}, we multiply these contributions from the positive poles by
$\phi_d$ and combine them with the alternating sum, using \eqref{c:eq:rightinteger},
\eqref{g:eq:rightcoeff}, and the identity
\[
\begin{aligned}
 &(j+2)\left(
 \frac{j(d+j+2)(d+1+2j)}{2d(j(d+j)-2)}
 -\frac{\phi_d}{j-a_d}\right)\\
 &\qquad=\frac1d\left(j(j-1)+\frac{d+11}{2}j+8-d\phi_d
 +\frac{(d-3+d\phi_d)(b_d-2)}{j+b_d}\right).
\end{aligned}
\]
This identity also holds at $j=0$. We sum these terms to express their
contribution to $E_{m,d}$ as
\[
\begin{aligned}
 &\frac{I_{m,d}''(1)}d+\frac{d+11}{2d}I_{m,d}'(1)
 +\left(\frac8d-\phi_d\right)I_{m,d}(1)\\
 &\qquad+\left(1-\frac3d+\phi_d\right)(b_d-2)
 \int_0^1z^{b_d-1}I_{m,d}(z)\,dz.
\end{aligned}
\]
The integral defining $I_{m,d}(z)$ gives $I_{m,d}(z)\ge0$ on $[0,1]$
and $I_{m,d}''(1)\ge0$. Moreover, $b_d>2$.
The definition of $\phi_d$, together with $a_d<2/d$ and $a_d+b_d>d$, gives
\begin{equation}\label{v3:eq:phibounds}
 \frac1d<\phi_d<\frac{(d+1)(d+2)}{d^3}.
\end{equation}
In particular, $d\phi_d<3$ for $d\ge5$.
In the displayed sum of four terms, only the term involving
$I_{m,d}'(1)$ can be negative.
For $0\le t\le1$, we apply the binomial theorem to obtain
\[
 \binom m2t^2(1-t)^2(1-t(1-t))^{m-2}\le1.
\]
We multiply by $2t^{d-1}/m$ and integrate to obtain
\[
 0\le-I_{m,d}'(1)
 =(m-1)\int_0^1t^{d+1}(1-t)^2(1-t(1-t))^{m-2}\,dt
 \le\frac2{md}.
\]

We discard the other three terms in that sum and the positive terms
from the negative poles.
Since $(d+m-1)/(d+2m)<1$, $b_d+m+1>m$, and $\binom{m+d}{d}\ge1$,
we obtain
\begin{equation}\label{g:eq:Elower}
 E_{m,d}\ge-\frac1m\left(
 \frac{d+11}{d^2}+2\phi_d+\frac{\eta_d}{d(b_d+1)}\right).
\end{equation}
Fix integers $k\ge m_0-1$ and $L>k$. By hypothesis,
$S_{m,d}(1)>B_{m,d}$ for $m\ge k+1$.
We iterate \eqref{g:eq:backward} and retain the term with $m=k+1$ to obtain
\[
 E_{k,d}\ge\left(\prod_{j=k}^{L-1}\left(1-\frac{a_d}{j}\right)\right)E_{L,d}
 +\frac{S_{k+1,d}(1)-B_{k+1,d}}{k}.
\]
Since $0<a_d<1$ and $k\ge1$, the product lies in $(0,1]$.
Using \eqref{g:eq:Elower} and letting $L\to\infty$, we conclude that
\[
 E_{k,d}\ge\frac{S_{k+1,d}(1)-B_{k+1,d}}{k}>0.
\]
\end{proof}

\begin{proof}[Proof of Proposition~\ref{g:th:gap}]
We prove $E_{m,d}>0$ directly for $2\le m\le(d-2)/3$ and then verify
the hypothesis of Lemma~\ref{g:lem:tail} for $m\ge\lceil(d+1)/3\rceil$.
The lemma then covers $m\ge\lfloor d/3\rfloor$, and the direct estimate
covers all remaining integers $m\ge2$.
We use \eqref{v3:eq:phibounds} throughout the proof and also need $\eta_d<d$.
In the defining formula for $\eta_d$, each numerator factor is less
than $d+5\le22d/17$, and the denominator exceeds $3d^3$. Hence
\[
 \eta_d<\frac13\left(\frac{22}{17}\right)^4d<d.
\]

First suppose $2\le m\le(d-2)/3$.
We use \eqref{g:eq:gap} to bound $E_{m,d}$ from below by estimating
the term containing $R_{m,d}(b_d)$ and the alternating sum separately.
We first prove by induction that
\[
 \frac{R_{m,d}(b_d)}{md\binom{m+d}{d}}>\frac1{d+2},\qquad m\ge2.
\]
At $m=2$, we evaluate the left-hand side as
\[
 \frac{b_d-1}{(b_d-2)(b_d+3)}>\frac1{d+2}.
\]
For $m\ge3$, we use the same normalization of \eqref{c:eq:Rrec} at
$u=b_d$ as in the proof of Lemma~\ref{g:lem:tail}.
Applying the induction hypothesis, we reduce the desired bound to
\[
 \frac{m-1}{d+2}-\frac{a_d+2}{(m+b_d)(m+b_d+1)}
 >\frac{m-1-a_d}{d+2}.
\]
We use $a_d(b_d+1)=a_d+2$ and
$(m+b_d)(m+b_d+1)>(b_d+1)(d+2)$ to verify this inequality.
Since $m-1-a_d>0$, we complete the induction.
Also,
\[
 \frac{\eta_d}{b_d+1}<1,\qquad
 m\binom{m+d}{d}\ge(d+1)(d+2).
\]

We retain the negative prefactor and write the last term in
\eqref{g:eq:gap} as an alternating sum.
For $m=2$, the sum consists of its $j=1$ term.
For $m\ge3$, we use \eqref{c:eq:rightinteger} to compare the absolute
values of consecutive summands. One positive factor decreases with
$j\ge1$, since
\[
 \frac{d}{du}\left(\frac{u(d+u+2)}{d(u(d+u)-2)}\right)
 =-\frac{2(u^2+2u+d+2)}{d(u^2+du-2)^2}<0,\qquad u\ge1.
\]
For $1\le j\le m-2$, we divide the product of the remaining factors
at $j+1$ by its value at $j$ to obtain
\[
 (m-1-j)\frac{j+3}{j+1}
 \frac{d+j+1}{(d+2j+4)(d+2j+1)}
 <\frac{2(m-2)}{d+6}<1.
\]
We therefore bound the alternating sum from below by its $j=1$ term.
We also use $\phi_d>1/d$ and
\[
 \frac1{d+2}-\frac1{m(d+1)}>0.
\]
We substitute these estimates into \eqref{g:eq:gap} and use
$\max(m,d)=d$ to obtain
\[
 E_{m,d}>\frac1{(d+1)(d+2)}-\frac1{d(d+1)m}
 -\frac{3(d+3)(m-1)}{d(d-1)(d+2)(d+4)}.
\]
The right-hand side is concave as a function of $m$.
At $m=2$ and $m=(d-2)/3$, respectively, it has the same sign as
\[
 d^3-5d^2-34d-10
 \qquad\text{and}\qquad d^2-11d-6.
\]
Both polynomials are increasing for $d\ge17$ and positive at $d=17$.
Thus the lower bound is positive throughout the interval, which proves
$E_{m,d}>0$ in the stated range.

To apply Lemma~\ref{g:lem:tail}, now suppose $m\ge\lceil(d+1)/3\rceil$.
Using $b_d<d+2/d$, we apply Lemma~\ref{g:lem:kernel} to obtain
\[
 \frac{(m+d+1)^4S_{m,d}(1)}{m-1}
 >\frac{3(d-5)(1-9/d^2)}{1+1/d+2/d^2}
 \ge\frac{360}{11}>\frac{129}{4},
\]
since each of the positive factors $d-5$, $1-9/d^2$, and
$(1+1/d+2/d^2)^{-1}$ increases with $d\ge17$.
After multiplying the two terms of $B_{m,d}$ by
\[
 \frac{(m+d+1)^4}{m-1},
\]
we bound the term containing $\eta_d$ by $1/4$ and the term containing
$\phi_d$ by $32$.

For the term containing $\eta_d$, put $s=m+d\ge23$.
Since $m\ge6$, $\eta_d<d$, and $\binom{s}{d}\ge\binom{s}{6}$,
the product formula for $\binom{s}{6}$ gives
\[
 \frac{\eta_d(s+1)^4}{m(m-1)d\binom{s}{d}}
 <\frac{24(s+1)^4}{(s-5)^6}
 \le\frac{24}{18^2}\left(\frac43\right)^4<\frac14.
\]

For the term containing $\phi_d$, we use \eqref{v3:eq:phibounds} and
$a_d<2/d$. First suppose $(d+1)/3\le m\le d$, and put
\[
 q=\frac{(m+d+1)^2}{m(d+1)}.
\]
Then
\[
 4\le q\le\frac{16}3,\qquad \frac{m}{m-1}\le\frac65.
\]
Since $a_d<1/2$ gives
$(m+b_d)(m+b_d+1)<(m+d+1)^2$, the $\phi_d$ contribution in
\eqref{g:eq:B} satisfies
\[
\begin{aligned}
 &\frac{(m+d+1)^4\phi_d}{m-1}\left(\frac{a_d+1}{m(d+1)}
       -\frac{a_d+2}{(m+b_d)(m+b_d+1)}\right)\\
 &\quad<\phi_d(d+1)\frac{m}{m-1}\,
          q\bigl((a_d+1)q-(a_d+2)\bigr)\\
 &\quad<\frac{32}{15}\left(1+\frac1d\right)^2
       \left(1+\frac2d\right)\left(10+\frac{26}d\right)\\
 &\quad\le\frac{32}{15}\left(\frac{18}{17}\right)^2
       \frac{19}{17}\frac{196}{17}<32.
\end{aligned}
\]
Here $q((a_d+1)q-(a_d+2))$ is positive and increasing in both
$q\ge4$ and $a_d>0$.

For $m\ge d+1$, we multiply the $\phi_d$ term in
\eqref{g:eq:Blarge} by the same factor and use the same denominator
comparison to bound it strictly by
\[
\begin{aligned}
 &\phi_d(a_d+2)\frac{(m+d+1)^2}{m(m+1)}
 \frac{(2d+1)m+(d+1)^2}{m-1}\\
 &\quad\le\frac{4\phi_d(a_d+2)(d+1)^2(3d+2)}{d(d+2)}\\
 &\quad<8\left(1+\frac1d\right)^4\left(3+\frac2d\right)
 \le8\left(\frac{18}{17}\right)^4\frac{53}{17}<32.
\end{aligned}
\]
The two factors involving $m$ decrease with $m$, so we evaluate them
at $m=d+1$, then apply \eqref{v3:eq:phibounds} and $a_d<2/d$.
The remaining factors decrease with $d$, so we evaluate them at $d=17$
to obtain the last bound.
In both cases, we therefore obtain
\[
 B_{m,d}<\frac{129(m-1)}{4(m+d+1)^4}<S_{m,d}(1).
\]

Lemma~\ref{g:lem:tail} now gives $E_{m,d}>0$ for
$m\ge\lfloor d/3\rfloor$ and completes the proof.
\end{proof}

\begin{proof}[Proof of Proposition~\ref{c:prop:endpoint} for $d\ge17$]
We apply Proposition~\ref{l:prop:smallpole},
Lemma~\ref{g:lem:kernel}, and Proposition~\ref{g:th:gap} to verify the
conditions~(\ref{c:cond:left}), (\ref{c:cond:kernel}), and~(\ref{c:cond:gap})
of Proposition~\ref{c:th:criterion}, respectively, and obtain
\eqref{c:eq:endpointnodes}.
\end{proof}

\section{Exact certificates for \texorpdfstring{$5\le d\le16$}{5 <= d <= 16}}\label{sec:lowranks}
We first prove Proposition~\ref{c:prop:endpoint} for $5\le d\le16$.
We check the signs of finitely many polynomials to treat every $m\ge2$.
We then prove Theorems~\ref{th:interlacing} and~\ref{th:intro-main}.

For integers $d\ge5$ and $m\ge2$, set
\begin{equation}\label{f:eq:majorantpoly}
 \widehat\lambda_{m,d}=\frac{(m+2)_{d-1}}{d!}
       -\frac{120(d-m)(m+5)_{d-4}}{(d+1)d!}.
\end{equation}
We define
\[
 H_{m,d}(x)=\frac{P_{m,d}(x)-\widehat\lambda_{m,d}\Delta_d(x)-P_{2,d-2}(x)}x.
\]
For fixed $d$, the quantity $\widehat\lambda_{m,d}$ is a polynomial in $m$.
We can also write
\[
 \widehat\lambda_{m,d}=\binom{m+d}{d}
 \left(\frac1{m+1}+\frac{120}{(m+2)_3}
 \left(\frac1{d+1}-\frac1{m+1}\right)\right).
\]
Since
\[
 0<\frac{120}{(m+2)_3}\le1,
\]
the parenthesized factor is a convex
combination of $1/(m+1)$ and $1/(d+1)$. Hence
$\widehat\lambda_{m,d}\ge\bar\lambda_{m,d}$, with equality if and only if
$m=2$ or $m=d$.
Subtracting $P_{2,d-2}(x)$ cancels the constant term without changing values
at its zeros. Thus $H_{m,d}(x)$ is a polynomial of degree at most $n-1$. For every zero
$r$ of $P_{2,d-2}(x)$, equation~\eqref{c:eq:correctiondirection} gives
\begin{equation}\label{f:eq:uppernodes}
 \frac{F_{m,d}(r)}{rP_{2,d-2}'(r)}\ge\frac{H_{m,d}(r)}{P_{2,d-2}'(r)}.
\end{equation}
Thus it suffices to determine the signs of the remainder of
$H_{m,d}(x)$ modulo $P_{2,d-2}(x)$.
\begin{lemma}\label{f:lem:polynomial}
For each integer $d\ge5$, the remainder of $H_{m,d}(x)$ modulo
$P_{2,d-2}(x)$ admits a polynomial extension in $m$ of degree at most $d-2$.
\end{lemma}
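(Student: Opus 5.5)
Fix $d\ge5$. The plan is to show that every coefficient of $H_{m,d}(x)$, as a function of $m$, is a polynomial in $m$ of degree at most $d-2$, up to a factor that does not depend on $m$. Reduction modulo the fixed polynomial $P_{2,d-2}(x)$ is linear over $\mathbb Q$, so it acts coefficientwise and the remainder inherits this property. We therefore treat $H_{m,d}(x)$ term by term. The term $P_{2,d-2}(x)$ does not depend on $m$. The term $\widehat\lambda_{m,d}\Delta_d(x)/x$ is $\Delta_d(x)/x$, which is independent of $m$, times $\widehat\lambda_{m,d}$. By \eqref{f:eq:majorantpoly}, $\widehat\lambda_{m,d}$ is a polynomial in $m$. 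Its first summand $(m+2)_{d-1}/d!$ has degree $d-1$, and its second summand $(d-m)(m+5)_{d-4}$ has degree $d-3$. So this term alone has degree $d-1$, and we must show that the top coefficient cancels against $P_{m,d}$.

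For $P_{m,d}(x)$, we take the coefficient of $x^k$ from \eqref{c:eq:coeff}, for $1\le k\le n$:
\[
 \frac{\binom{m+d}{k}}{d-k}\sum_{h=0}^{m-1}\binom{d-k+h}{h+k+1}\binom{k-1+h}{h}.
\]
The summand vanishes unless $h+k+1\le d-k+h$, which always holds for $k\le n$. So the upper limit $m-1$ matters, and we must show that the partial sum in $m$ is a polynomial. Here $\binom{d-k+h}{h+k+1}=\binom{d-k+h}{d-2k-1}$ is a polynomial in $h$ of degree $d-2k-1$, and $\binom{k-1+h}{h}$ has degree $k-1$. The summand therefore has degree $d-k-2$ in $h$. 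Its partial sum over $0\le h\le m-1$ is a polynomial in $m$ of degree $d-k-1$. Multiplying by $\binom{m+d}{k}$, of degree $k$, gives total degree $d-1$ for each $k$. Equivalently, one can use $P_{m,d}(x)=1+xR_{m,d}(\vartheta_x)P_{1,d-2}(x)$ and read off the same polynomiality.

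The main step is to show that the degree-$(d-1)$ parts cancel. Write $L(x)$ for the polynomial whose coefficient of $x^k$ is the $m^{d-1}$-coefficient of $[x^k]P_{m,d}(x)$. This leading part is the product of the leading coefficients $1/k!$ and $1/((d-2k-1)!(k-1)!(d-k-1))$, where the second comes from summing a monomial of degree $d-k-2$. The claim to verify is that the corresponding leading coefficient of $\widehat\lambda_{m,d}\Delta_d(x)$, namely $\Delta_d(x)/d!$, matches, that is,
\[
 L(x)=\Delta_d(x)/d!.
\]
We check this coefficientwise using \eqref{c:eq:seed} and \eqref{c:eq:Deltacoeff}. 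The coefficient of $x^{k}$ in $\Delta_d(x)$ is
\[
 \frac{2d(d-k)}{(k+1)(d-k+1)}\,[x^{k-1}]P_{1,d-2}(x).
\]
Substituting \eqref{c:eq:seed} turns the identity into a factorial identity in $k$ and $d$, which we check directly. The natural reason it should hold is that $\lim_{m\to\infty} m^{-(d-1)}\bar\lambda_{m,d}\cdot d!=1$ for $m>d$, and the leading coefficient of $\lambda$ in the bound of \cite[Lemma~5.4]{FV} is tied to $\Delta_d$. If the direct factorial check is messy, a fallback is to compare generating functions in $k$, or to use the representation via $R_{m,d}$: as $m\to\infty$, the dominant part of $R_{m,d}(u)/m^{d-1}$ comes from the pole at $u=-m-1$ and the largest $h$.

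Once the cancellation holds, every coefficient of $H_{m,d}(x)$ is a polynomial in $m$ of degree at most $d-2$ for all integers $m\ge2$. The same holds for the coefficients of its remainder modulo $P_{2,d-2}(x)$, since the division uses only the fixed polynomial $P_{2,d-2}(x)$. Replacing $m$ by a variable gives the asserted polynomial extension. I expect the leading-order cancellation to be the only nontrivial point; the rest is degree bookkeeping.
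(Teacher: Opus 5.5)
Your degree bookkeeping is fine, but the step you call the main one is false: the $m^{d-1}$-parts do \emph{not} cancel in $H_{m,d}(x)$. The leading coefficient of $[x^k]P_{m,d}(x)$ is $\binom{d-k-2}{k-1}/(k!(d-k)!)$. Your version drops the factor $1/(d-k)$ from the prefactor in \eqref{c:eq:coeff}. Comparing this with \eqref{c:eq:seed} gives $L(x)=xP_{1,d-2}(x)/(d-1)!$, not $\Delta_d(x)/d!$. Coefficientwise, by \eqref{c:eq:Deltacoeff}, your identity would force $(k+1)(d-k+1)=2(d-k)$. That is impossible for $k\ge1$, since the left side is at least $2(d-k+1)$. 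For instance, at $d=5$ one has $L(x)=x(1+2x)/24$, whereas $\Delta_5(x)/5!=x(4+5x)/120$. Consequently the coefficients of $H_{m,d}(x)$ genuinely have degree $d-1$ in $m$. At $d=5$ the constant coefficient of $H_{m,5}(x)$ has $m^4$-coefficient $1/24-4/120=1/120$. So your planned factorial check would fail, and your concluding claim that every coefficient of $H_{m,d}(x)$ has degree at most $d-2$ is wrong.

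The missing idea is that the reduction modulo $P_{2,d-2}(x)$ is what removes the top term; it is not a passive linear step applied after a cancellation. Using $L(x)=xP_{1,d-2}(x)/(d-1)!$ and \eqref{c:eq:Kidentity}, the coefficient of $m^{d-1}$ in $H_{m,d}(x)$ is $\bigl(dP_{1,d-2}(x)-\Delta_d(x)/x\bigr)/d!=P_{2,d-2}(x)/d!$. This is nonzero but divisible by $P_{2,d-2}(x)$, so it vanishes in the remainder. The second summand of $\widehat\lambda_{m,d}$ has degree only $d-3$, so the remainder has degree at most $d-2$ in $m$. This is exactly why the lemma is stated for the remainder rather than for $H_{m,d}(x)$ itself. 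If you replace your cancellation step by this observation, the rest of your argument goes through: the polynomiality of the partial sums in $m$ and the linearity of reduction modulo a fixed polynomial.
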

\begin{proof}
Fix $1\le k\le n$. We rewrite the binomial coefficients in
\eqref{c:eq:coeff} with lower arguments $d-2k-1$ and $k-1$ to express
the summand as a polynomial in $h$ of degree $d-k-2$, with leading coefficient
\[
 \frac1{(d-2k-1)!(k-1)!}.
\]
Summing over $0\le h<m$ raises the degree by one and divides the leading
coefficient by $d-k-1$. After multiplying by the prefactor in
\eqref{c:eq:coeff}, we find that $[x^k]P_{m,d}(x)$ is a polynomial
in $m$ of degree $d-1$, with leading coefficient
\[
 \frac{\binom{d-k-2}{k-1}}{k!(d-k)!}.
\]
Comparing with \eqref{c:eq:seed}, we find that the coefficient of
$m^{d-1}$ in the polynomial
\[
 \frac{P_{m,d}(x)-1}x
\]
is
\[
 \frac{P_{1,d-2}(x)}{(d-1)!}.
\]
The second term in \eqref{f:eq:majorantpoly} has degree $d-3$.
We therefore compute the coefficient of
$m^{d-1}$ in $H_{m,d}(x)$ as
\[
 \frac{dP_{1,d-2}(x)-\Delta_d(x)/x}{d!}=\frac{P_{2,d-2}(x)}{d!}.
\]
We reduce modulo $P_{2,d-2}(x)$ to eliminate this term and obtain the claimed degree bound.
\end{proof}

We expand this remainder in powers of $m-2$.
Lemma~\ref{f:lem:polynomial} shows that only powers with exponents from
$0$ to $d-2$ occur, and each power is nonnegative for $m\ge2$.
In view of \eqref{f:eq:uppernodes}, it therefore
suffices to show that all $d-1$ polynomials in this expansion have the same
sign as $P_{2,d-2}'(r)$ at each zero $r$ of $P_{2,d-2}(x)$.
We check these signs on rational intervals containing the zeros.
For $5\le d\le16$ and $0\le j\le d-2$, consider the coefficient of
$(m-2)^j$ in the remainder of $H_{m,d}(x)$ modulo $P_{2,d-2}(x)$. Let
$p_{d,j}(x)$ be the primitive integer polynomial that is a positive
rational multiple of this coefficient. All these polynomials are nonzero and
have degree less than $n-1$.
By Lemma~\ref{f:lem:polynomial}, we recover the remainder by Lagrange
interpolation from its values at $m=2,\ldots,d$.
Equations~\eqref{c:eq:coeff} and~\eqref{f:eq:majorantpoly} therefore
determine the polynomials $p_{d,j}(x)$ by exact rational arithmetic.

\begin{proof}[Proof of Proposition~\ref{c:prop:endpoint} for $5\le d\le16$]
We first locate the zeros of $P_{2,d-2}(x)$. Let $(A_i,q_i)$,
$1\le i\le n-1$, denote the pairs in Table~\ref{tab:intervals},
in the displayed order. They define disjoint intervals
\[
 \left(\frac{A_i}{q_i},\frac{A_i+1}{q_i}\right),
 \qquad 1\le i\le n-1.
\]
We substitute the endpoints into $P_{2,d-2}(x)$ to obtain
\begin{equation*}
 (-1)^{n-1-i}P_{2,d-2}(A_i/q_i)<0<
 (-1)^{n-1-i}P_{2,d-2}((A_i+1)/q_i).
\end{equation*}
Consequently the $i$th interval contains the $i$th zero $r_i$ of $P_{2,d-2}(x)$, and
$(-1)^{n-1-i}P_{2,d-2}'(r_i)>0$.

To check the signs of $p_{d,j}(x)$ on these intervals, we form the integer polynomials in $y$
\begin{equation}\label{f:eq:intervaltransform}
 (-1)^{n-1-i} q_i^{n-2}(1+y)^{n-2}
 p_{d,j}\!\left(\frac{A_i+(A_i+1)y}{q_i(1+y)}\right).
\end{equation}
The substitution in \eqref{f:eq:intervaltransform} maps $y>0$ onto
the $i$th interval. Every coefficient of this transformed polynomial is a positive
integer, so $(-1)^{n-1-i}p_{d,j}(x)>0$ throughout that interval.
The last column of Table~\ref{tab:intervals} gives the minimum of all
coefficients of these transformed polynomials over $0\le j\le d-2$ and $1\le i\le n-1$.
For example, when $d=7$ and $j=0$, we have $p_{7,0}(x)=163x+38$.
For the two intervals $(-2,-1)$ and $(-1/16,0)$,
\eqref{f:eq:intervaltransform} gives $288+125y$ and $445+608y$,
respectively. The supplementary material gives the explicit expansion of
\eqref{f:eq:intervaltransform} and includes a program for exact computation
and its output, with all primitive polynomials, transformed coefficients,
endpoint values, and minima in Table~\ref{tab:intervals}.

\begin{table}[htbp]
\centering
\small
\setlength{\tabcolsep}{5pt}
\caption{Rational intervals and minimum positive coefficients in
\eqref{f:eq:intervaltransform}. A pair $(A,q)$ denotes $(A/q,(A+1)/q)$.}
\label{tab:intervals}
\begin{tabular}{@{}r l r@{}}
\toprule
$d$ & $(A_1,q_1),\ldots,(A_{n-1},q_{n-1})$ & Least coefficient\\
\midrule
5 & $(-1,1)$ & $1$\\
6 & $(-1,1)$ & $1$\\
7 & $(-2,1),\ (-1,16)$ & $24$\\
8 & $(-4,8),\ (-1,32)$ & $117$\\
9 & $(-4,1),\ (-3,8),\ (-1,64)$ & $170$\\
10 & $(-2,1),\ (-2,8),\ (-1,64)$ & $4078$\\
11 & $(-6,1),\ (-2,2),\ (-5,32),\ (-1,128)$ & $672$\\
12 & $(-4,2),\ (-6,16),\ (-4,32),\ (-1,128)$ & $1723155$\\
13 & $(-10,1),\ (-4,4),\ (-5,16),\ (-3,32),\ (-1,256)$ & $447905$\\
14 & $(-3,1),\ (-2,2),\ (-4,16),\ (-5,64),\ (-1,256)$ & $329123$\\
15 & $(-14,1),\ (-2,1),\ (-4,8),\ (-6,32),\ (-4,64),\ (-3,1024)$ & $16890$\\
16 & $(-4,1),\ (-4,4),\ (-6,16),\ (-5,32),\ (-7,128),\ (-1,512)$ & $54329359$\\
\bottomrule
\end{tabular}
\end{table}

We evaluate the remainder at $r_i$ to recover $H_{m,d}(r_i)$.
For $0\le j\le d-2$, its coefficient of $(m-2)^j$ is a positive
multiple of $p_{d,j}(r_i)$. Thus every coefficient of
$(-1)^{n-1-i}H_{m,d}(r_i)$ is positive in these degrees, and all
higher coefficients vanish. In particular, the coefficient with $j=0$ is positive.
Consequently,
\[
 \frac{H_{m,d}(r_i)}{P_{2,d-2}'(r_i)}>0
 \qquad\text{for every }m\ge2.
\]
We now apply \eqref{f:eq:uppernodes} to conclude the proof.
\end{proof}

\FloatBarrier
We can now prove the main theorems.

\begin{proof}[Proof of Theorem~\ref{th:interlacing}]
Let $\lambda\le\bar\lambda_{m,d}$. We write
\[
 P_{m,d}(x)-\lambda\Delta_d(x)
 =F_{m,d}(x)+(\bar\lambda_{m,d}-\lambda)\Delta_d(x).
\]
Since $\bar\lambda_{m,d}-\lambda\ge0$,
Proposition~\ref{c:prop:leading} and \eqref{c:eq:Deltacoeff} show that
this polynomial has degree $n$ and positive leading coefficient.
Its constant term is $1$.
Proposition~\ref{c:prop:endpoint} and \eqref{c:eq:correctiondirection}
show that the rational function
\[
 \frac{P_{m,d}(x)-\lambda\Delta_d(x)}{xP_{2,d-2}(x)}
\]
has positive residues at every zero of $P_{2,d-2}(x)$. Its residue at zero is $1$ because $P_{2,d-2}(0)=1$.
We apply Lemma~\ref{c:lem:residue}(ii) to obtain
\[
 P_{m,d}(x)-\lambda\Delta_d(x)\prec xP_{2,d-2}(x).
\]
Since the largest zero of $xP_{2,d-2}(x)$ is zero, we conclude that
$P_{m,d}(x)-\lambda\Delta_d(x)$ has only simple negative zeros and that
$P_{2,d-2}(x)$ strictly interlaces this polynomial.
\end{proof}

\begin{proof}[Proof of Theorem~\ref{th:intro-main}]
Let $M$ be a sparse paving matroid of rank $d$ and corank $m$,
and let $\lambda$ be its number of circuit-hyperplanes. Suppose that
$P_M(x)$ is nonconstant. Then $M$ is loopless and $P_M(0)=1$.
For $d\le4$, the degree bound and
nonnegative coefficients \cite[Theorems 2.5 and 5.6]{FV} imply that
$P_M(x)$ is linear with positive coefficients. Its unique zero is simple
and negative.

We may therefore assume $d\ge5$. Corank zero gives the free matroid
and the constant polynomial $1$. When $m=1$, two distinct $d$-subsets
of the ground set meet in $d-1$ elements, whereas distinct
circuit-hyperplanes meet in at most $d-2$ elements. Hence $\lambda$
is either zero or one, and the relaxation formula gives $P_{1,d}(x)$
or $P_{1,d-1}(x)$. We apply Lemma~\ref{c:lem:comparisonzeros}
with parameter $d+2$ and $d+1$, respectively, to prove the assertion
in either case.

For $m\ge2$, the bound on $\lambda$ and the relaxation formula give
$P_M(x)=P_{m,d}(x)-\lambda\Delta_d(x)$ with
$0\le\lambda\le\bar\lambda_{m,d}$. We therefore apply Theorem~\ref{th:interlacing}
to complete the proof.
\end{proof}

\section*{Acknowledgements}
Philip B.~Zhang was supported by the National Natural Science Foundation of China under
Grant No.~12171362 and the Natural Science Foundation of Tianjin
Municipality under Grant No.~25JCYBJC00430.

\section*{Declaration of Generative AI}
During the preparation of this manuscript, the author used ChatGPT Pro
to explore proof strategies, write programs, and assist with writing.
The author takes full responsibility for the content of this manuscript.


\begin{thebibliography}{99}
\small

\bibitem{BHMPW}
T.~Braden, J.~Huh, J.~P.~Matherne, N.~Proudfoot, and B.~Wang,
\emph{Singular Hodge theory for combinatorial geometries},
J. Amer. Math. Soc., to appear,
\href{https://arxiv.org/abs/2010.06088}{arXiv:2010.06088}.

\bibitem{Branden}
P.~Br\"and\'en,
\emph{On operators on polynomials preserving real-rootedness and the
Neggers-Stanley conjecture},
J. Algebraic Combin. \textbf{20} (2004), no.~2, 119--130.

\bibitem{BrandenPF}
P.~Br\"and\'en,
\emph{On linear transformations preserving the P\'olya frequency property},
Trans. Amer. Math. Soc. \textbf{358} (2006), no.~8, 3697--3716.

\bibitem{CL}
R.~Cheng and S.~Liu,
\emph{Kazhdan--Lusztig polynomials of matroids need not be unimodal},
preprint (2026),
\href{https://arxiv.org/abs/2607.24186}{arXiv:2607.24186}.

\bibitem{CC}
T.~Craven and G.~Csordas,
\emph{Location of zeros. Part I: Real polynomials and entire functions},
Illinois J. Math. \textbf{27} (1983), no.~2, 244--278.

\bibitem{EPW}
B.~Elias, N.~Proudfoot, and M.~Wakefield,
\emph{The Kazhdan--Lusztig polynomial of a matroid},
Adv. Math. \textbf{299} (2016), 36--70.

\bibitem{FNV}
L.~Ferroni, G.~D.~Nasr, and L.~Vecchi,
\emph{Stressed hyperplanes and Kazhdan--Lusztig gamma-positivity for matroids},
Int. Math. Res. Not. IMRN (2023), no.~24, 20883--20942.

\bibitem{FV}
L.~Ferroni and L.~Vecchi,
\emph{Matroid relaxations and Kazhdan--Lusztig non-degeneracy},
Algebr. Comb. \textbf{5} (2022), no.~4, 745--769.

\bibitem{GLiXYZ}
A.~L.~L.~Gao, E.~Y.~H.~Li, M.~H.~Y.~Xie, A.~L.~B.~Yang, and Z.-X.~Zhang,
\emph{Induced log-concavity of equivariant matroid invariants},
J. Comb. Algebra (2026), published online first.

\bibitem{GLXYZ}
A.~L.~L.~Gao, L.~Lu, M.~H.~Y.~Xie, A.~L.~B.~Yang, and P.~B.~Zhang,
\emph{The Kazhdan--Lusztig polynomials of uniform matroids},
Adv. in Appl. Math. \textbf{122} (2021), 102117.

\bibitem{GPYeq}
K.~Gedeon, N.~Proudfoot, and B.~Young,
\emph{The equivariant Kazhdan--Lusztig polynomial of a matroid},
J. Combin. Theory Ser. A \textbf{150} (2017), 267--294.

\bibitem{GPY}
K.~Gedeon, N.~Proudfoot, and B.~Young,
\emph{Kazhdan--Lusztig polynomials of matroids:
a survey of results and conjectures},
S\'em. Lothar. Combin. \textbf{78B} (2017), Art.~80, 12~pp.

\bibitem{HT}
O.~Holtz and M.~Tyaglov,
\emph{Structured matrices, continued fractions, and root localization of polynomials},
SIAM Rev. \textbf{54} (2012), no.~3, 421--509.

\bibitem{LNR}
K.~Lee, G.~D.~Nasr, and J.~Radcliffe,
\emph{A combinatorial formula for Kazhdan--Lusztig polynomials of
sparse paving matroids},
Electron. J. Combin. \textbf{28} (2021), no.~4, Paper~P4.44.

\bibitem{MNWW}
D.~Mayhew, M.~Newman, D.~Welsh, and G.~Whittle,
\emph{On the asymptotic proportion of connected matroids},
European J. Combin. \textbf{32} (2011), no.~6, 882--890.

\bibitem{DLMF}
NIST Digital Library of Mathematical Functions,
\url{https://dlmf.nist.gov/}.

\bibitem{PWY}
N.~Proudfoot, M.~Wakefield, and B.~Young,
\emph{Intersection cohomology of the symmetric reciprocal plane},
J. Algebraic Combin. \textbf{43} (2016), no.~1, 129--138.

\bibitem{PXY}
N.~Proudfoot, Y.~Xu, and B.~Young,
\emph{The $Z$-polynomial of a matroid},
Electron. J. Combin. \textbf{25} (2018), no.~1, Paper~P1.26.

\bibitem{XZ}
M.~H.~Y.~Xie and P.~B.~Zhang,
\emph{The log-concavity of Kazhdan--Lusztig polynomials of uniform matroids},
J. Syst. Sci. Complex. \textbf{36} (2023), no.~1, 117--128.

\bibitem{XZz}
M.~H.~Y.~Xie and P.~B.~Zhang,
\emph{Real-rootedness of the $Z$-polynomials of sparse paving matroids
and strict interlacing for uniform matroids},
preprint (2026),
\href{https://arxiv.org/abs/2609.07636}{arXiv:2609.07636}.

\bibitem{XZunimodal}
M.~H.~Y.~Xie and Z.-R.~Zhang,
\emph{Unimodality of Kazhdan--Lusztig polynomials of sparse paving matroids},
preprint (2026),
\href{https://arxiv.org/abs/2609.40084}{arXiv:2609.40084}.

\end{thebibliography}
\end{document}